\theoremstyle{plain}
\newtheorem{thm}{Theorem}[section]
\newtheorem{cor}[thm]{Corollary}
\newtheorem{pro}[thm]{Proposition}
\newtheorem{lem}[thm]{Lemma}
\newtheorem{rem}[thm]{Remark}
\newtheorem*{ex*}{Example}
\def\1{\boldsymbol{1}}
\def\8{\infty}
\def\8{\infty}
\DeclareMathOperator{\support}{supp}
\DeclareMathOperator{\loc}{loc}
\DeclareMathOperator{\pv}{p.v.}
\title[Harmonic analysis related to exotic Bessel operators]
	{Mapping properties of fundamental harmonic analysis operators in the exotic Bessel framework}
\author[B. Langowski]{Bartosz Langowski}
\author[A. Nowak]{Adam Nowak}
\address{Bartosz Langowski,\newline
Indiana University, Department of Mathematics, \newline
831 East 3rd St., Bloomington, IN 47405, USA
\newline
\indent and \newline              
Wroc\l{}aw University of Science and Technology,
Faculty of Pure and Applied Mathematics,
\newline
Wyb.\ Wyspia\'nskiego 27, 50--370 Wroc\l{}aw, Poland }
\email{balango@iu.edu}
\address{Adam Nowak, \newline			
		Polish Academy of Sciences, 
		Institute of Mathematics, \newline
      \'Sniadeckich 8,
      00--656 Warszawa, Poland    
      }
\email{anowak@impan.pl}
\begin{document}
%\date{\today}

\begin{abstract}
We prove sharp power-weighted $L^p$, weak type and restricted weak type inequalities for the heat semigroup maximal
operator and Riesz transforms associated with the Bessel operator $B_{\nu}$ in the exotic range of the parameter
$-\infty < \nu < 1$. Moreover, in the same framework, we characterize basic mapping properties for other
fundamental harmonic analysis operators, including the heat semigroup based vertical $g$-function and
fractional integrals (Riesz potential operators).
\end{abstract}

\maketitle
\thispagestyle{empty}

\footnotetext{
\emph{\noindent 2010 Mathematics Subject Classification:} primary 42C05; secondary 42C20, 42C99.\\
%42C05 - Orthogonal functions and polynomials, general theory of nontrigonometric harmonic analysis 
%42C20 - Other transformations of harmonic type
%42C99 - None of the above, but in this section
\emph{Key words and phrases:} 
Bessel operator, continuous Fourier-Bessel expansions, Hankel transform,
heat semigroup maximal operator, Riesz transform, square function.

Research supported by the National Science Centre of Poland within the project OPUS 2017/27/B/ST1/01623.
}

%%%%%%%%%%%%%%%%%%%%%%%%%%%%%%%%%%%%%%%%%%%%%%%%%%%%%%%%%%%%%%
%%%%%%%%%%%%%%%%%%%%%%%%%%%%%%%%%%%%%%%%%%%%%%%%%%%%%%%%%%%%%%

\section{Introduction} \label{sec:intro}

Let
$$
B_{\nu} = - \frac{d^2}{dx^2} - \frac{2\nu+1}{x}\frac{d}{dx} 
$$
be the Bessel differential operator. For $\nu \in \mathbb{R}$ we consider $B_{\nu}$ acting on functions
on $\mathbb{R}_+ = (0,\infty)$. This operator is formally symmetric in $L^2(d\eta_{\nu})$, where
$$
d\eta_{\nu}(x) = x^{2\nu+1}\, dx, \qquad x > 0.
$$

When $\nu > -1$, there exists a classical self-adjoint extension $B_{\nu}^{\textrm{cls}}$ of $B_{\nu}$
(acting initially on $C_c^2(\mathbb{R}_+)$), whose spectral decomposition is given via the (modified) Hankel transform
(see e.g.\ \cite[Section 4]{NSS}). Harmonic analysis related to $B_{\nu}^{\textrm{cls}}$,
having roots in the seminal work of Muckenhoupt and Stein \cite{MuSt}, has been very widely studied and now is
well understood. The related bibliography seems to be endless. For some recent developments, see for instance
\cite{Bet12,Bet14,Bet13,Bet11,Bet15,Bet16,BHNV,CaSz,DPW,KaPr,NoSt1}
and references therein. But this by no means exhausts contributions to the subject.
Note that the metric measure space $(\mathbb{R}_+,|\cdot|,d\eta_{\nu})$, where $|\cdot|$ stands for the Euclidean norm, is
a space of homogeneous type when $\nu > -1$, in particular the measure $d\eta_{\nu}$ is doubling.
Note also that for $\nu=n/2-1$ the Bessel operator is just the radial part of $-\Delta$ in $\mathbb{R}^n$, $n \ge 1$,
so for these $\nu$ analysis related to $B_{\nu}$ corresponds to radial analysis in $\mathbb{R}^n$.

On the other hand, the case $\nu \le -1$ is much less recognized. It turns out that for these $\nu$, or even for the slightly
larger range $\nu < 1$, there exists a self-adjoint extension of $B_{\nu}$ (considered initially on $C_c^2(\mathbb{R}_+)$)
expressible in terms of the (modified) Hankel transform, but in a different way from $B_{\nu}^{\textrm{cls}}$ if $\nu \neq 0$.
This was established only recently \cite{NSS}. We call this new self-adjoint operator $B_{\nu}^{\textrm{exo}}$
(from exotic), and refer to the corresponding transform as exotic (modified) Hankel transform. See \cite[Section 4]{NSS}
for details. Note that for $\nu \le -1$ the measure $\eta_{\nu}$ is not even locally finite: there are balls
in $(\mathbb{R}_+,|\cdot|,d\eta_{\nu})$ of arbitrarily
small radii and infinite measure. Hence the situation does not fall under classical theories/setups, like e.g.\
Calder\'on-Zygmund operator theory on spaces of homogeneous/non-homogeneous type.
According to our best knowledge, harmonic analysis aspects of the exotic Bessel context were studied so far only
in \cite{NSS} and in the very recent paper \cite{Ka}.

The aim of this article is to study in detail mapping properties of several fundamental harmonic analysis operators
in the exotic Bessel framework. For the exotic Bessel semigroup maximal operator and Riesz transforms associated with
the exotic Bessel operator we obtain a complete characterization of power-weighted $L^p$, weak type and restricted
weak type inequalities, see Theorem \ref{thm:maxWexo} and Theorems \ref{thm:Rexo} and \ref{thm:Rexob}, respectively.
A similar result is concluded for the Bessel-Poisson semigroup maximal operator as well, see Proposition \ref{prop:Poisson}.
Another operator we consider is the vertical $g$-function based on the exotic Bessel semigroup. In this case
we characterize power-weighted $L^p$ and restricted weak type inequalities, and get almost sharp description of
power-weighted weak type inequalities, see Theorem \ref{thm:gexo}. We also treat fractional integrals, viz.\ Riesz potential
operators, related to the exotic Bessel operator. For these operators, we derive a complete characterization of
two-weight $L^p-L^q$ inequalities with power weights involved, see Theorem \ref{thm:potstr}.
Let us emphasize that sharpness is an important aspect of all these results. Likewise, it is an important ingredient
of our motivation, since sharp description of basic mapping properties enables deep insight into the nature of
the operators in question.

Inspiration for this research comes mainly from the paper \cite{BHNV}, and also from \cite{NoSt1}, where
analogous results were obtained in the classical Bessel setting. In fact, here we make use of the results, analysis and
methods applied there. But we also go further, with some new analysis. The latter pertains the most to the exotic
Riesz-Bessel transforms. For these operators even the very definition is not straightforward. Moreover,
there is no handy relation between the classical and exotic Riesz transforms, in contrast with the other operators
considered. Roughly speaking, this is due to the spatial variable differentiation entering the definitions of
the Riesz transforms. For the same reason, there is no coincidence (or rather a discontinuity occurs)
with the classical Riesz-Bessel transforms as the exotic parameter $\nu$ tends to $0$ (note that
$B_{\nu}^{\textrm{cls}}=B_{\nu}^{\textrm{exo}}$ for $\nu=0$). This phenomenon is, perhaps, a bit unexpected
and makes a difference from the other operators investigated.

In this work we consider only the one-dimensional situation and put emphasis on sharpness and completeness of the results.
Nevertheless, there exists a wider background of our setup. First of all, one can construct a general multi-dimensional
(exotic) Bessel framework in a natural way, simply by taking tensor products of
the one-dimensional situations related to $B_{\nu}^{\textrm{cls}}$ and $B_{\nu}^{\textrm{exo}}$, cf.\ \cite{NSS}.
In this way, some coordinates (axes) may be classical and some exotic. Also, there is a Dunkl theory generalization of
the {exotic} Bessel concept, see \cite{ALN}. Similar exotic situations naturally emerge in settings associated with classical
orthogonal expansions, see \cite{NSS}. In each case, the underlying philosophy and basic demand are the same:
to admit all values of the associated parameters in the initial pair
[differential or difference-differential `Laplacian', the associated measure] and subsequent analysis.

It is worth mentioning that there is a probabilistic interpretation of the exotic Bessel setting.
The semigroup generated by $B_{\nu}^{\textrm{cls}}$, $\nu > -1$, is the transition semigroup of the Bessel diffusion process
$X^{\nu}_{2t}$, which is well known in the literature. The semigroup generated by $B_{\nu}^{\textrm{exo}}$, $\nu < 0$, is
also a transition semigroup of another Bessel process $\widetilde{X}^{\nu}_{2t}$, which in the overlapping range $-1 < \nu < 0$ is
different from $X^{\nu}_{2t}$. The two processes are dual to each other in a certain sense, moreover $\widetilde{X}^{\nu}$ is Doob's
$h$-transform of $X^{\nu}$ with $h(x)= x^{-2\nu}$.
For all this and further facts on Bessel processes see e.g.\ \cite[p.\ 33--35, 71--76, 133--134]{BS} and \cite[Chapter XI]{RY}.

%%%%%%%%%%%%%%%%%%%%%%%%%%%%%%%%%%%%%%%%%%%%%%%%%%%%%%%%%%%%%%

\subsection*{Structure of the paper}
Below, still in this section, we comment on the notation used in the paper and recall basic terminology needed.
Section \ref{sec:tech} constitutes a technical preparation needed later on. In particular, it invokes a number
of auxiliary `special' operators and summarizes their fundamental mapping properties. In Section \ref{sec:max}
the exotic Bessel semigroup maximal operator is studied, whereas in Section \ref{sec:riesz} Riesz transforms
in the exotic Bessel framework are defined and investigated. Sections \ref{sec:g} and \ref{sec:frac} are devoted
to the vertical $g$-function based on the exotic Bessel semigroup and fractional integrals in the exotic Bessel
situation, respectively.

%%%%%%%%%%%%%%%%%%%%%%%%%%%%%%%%%%%%%%%%%%%%%%%%%%%%%%%%%%%%%%

\subsection*{Notation}
Throughout the paper we use a fairly standard notation. Thus $\mathbb{R}_+ = (0,\infty)$.
For the sake of brevity, we often omit $\mathbb{R}_{+}$ when denoting $L^p$, or more generally Lorentz spaces $L^{p,q}$,
related to the measure spaces $(\mathbb{R}_{+}, x^{\delta}dx)$ and $(\mathbb{R}_+, d\eta_{\nu})$. For instance,
$L^p(x^{\delta}dx) = L^p(\mathbb{R}_{+}, x^{\delta}dx)$.
As usual, for $1 \le p \le \infty$, $p'$ denotes its conjugate exponent, $1/p+1/p' = 1$.
By weakening a strict inequality ``$<$'' we mean replacing it by ``$\le$''. Similarly, strictening a weak 
inequality ``$\le$'' means replacing it by ``$<$''.
We write $X \lesssim Y$ to indicate that $X \le CY$ with a positive constant $C$ independent
of significant quantities. We shall write $X \simeq Y$ when simultaneously $X \lesssim Y$ and $Y \lesssim X$.

%%%%%%%%%%%%%%%%%%%%%%%%%%%%%%%%%%%%%%%%%%%%%%%%%%%%%%%%%%%%%%

\subsection*{Basic terminology}
The notions, facts and terminology that follow have a more general meaning, but here they
are specified to $\mathbb{R}_+$ equipped with a power measure, and power weights, since this is what we need in this paper.

Let $1 \le p < \infty$ and $\delta \in \mathbb{R}$.
An operator $T$ is said to be of strong type $(p,p)$ with respect to the measure space $(\mathbb{R}_+,x^{\delta}dx)$
when it is bounded on $L^p(\mathbb{R}_+,x^{\delta}dx)$. Strong type $(\infty,\infty)$ of $T$ means
boundedness of $T$ on $L^{\infty}(\mathbb{R}_+,x^{\delta}dx) = L^{\infty}(\mathbb{R_+},dx)$.
Further, $T$ is said to be of weak type $(p,p)$ with respect to the measure space $(\mathbb{R}_+,x^{\delta}dx)$
if it satisfies the weak type $(p,p)$ estimate
$$
\lambda^p \int_{\{y > 0 : |Tf(y)|> \lambda\}} x^{\delta}\, dx \lesssim \int_0^{\infty} |f(x)|^p x^{\delta}\, dx,
	\qquad \lambda > 0,
$$
uniformly in $f \in L^p(\mathbb{R}_+,x^{\delta}dx)$. This is equivalent to boundedness
of $T$ from $L^p(\mathbb{R}_+,x^{\delta}dx)$ to the Lorentz space $L^{p,\infty}(\mathbb{R}_+,x^{\delta}dx)$.
The latter space is referred to as weak $L^p(\mathbb{R}_+,x^{\delta}dx)$.
Finally, $T$ is of restricted weak type $(p,p)$ with respect to the measure space $(\mathbb{R}_+,x^{\delta}dx)$
if it satisfies the weak type $(p,p)$ estimate after restricting to $f$ being characteristic functions of sets of finite
$x^{\delta}dx$ measure. This property is equivalent to boundedness of $T$ between the extreme Lorentz spaces,
from $L^{p,1}(\mathbb{R}_+,x^{\delta}dx)$ to $L^{p,\infty}(\mathbb{R}_+,x^{\delta}dx)$. Recall that, on the second
index scale, the space $L^{p,1}$ is the smallest one, and $L^{p,\infty}$ is the biggest one among $L^{p,q}$, $1\le q \le \infty$.

Let now $1 \le p \le \infty$ and $A \in \mathbb{R}$. As before,
we denote by $L^p(\mathbb{R}_+,x^{Ap}d\eta_{\nu})$ the $x^{Ap}$ power
weighted $L^p$ space with respect to the $d\eta_{\nu}$ measure. This means that $f \in L^p(\mathbb{R}_+,x^{Ap}d\eta_{\nu})$
if and only if $x^{A}f \in L^p(\mathbb{R}_+,d\eta_{\nu})$. The point is that this way of writing weights allows one to introduce
them also in the $L^{\infty}$ case without violating much the general notation. Thus, by convention,
$L^{\infty}(\mathbb{R}_+,x^{A\infty}d\eta_{\nu})$ consists of all measurable
functions $f$ such that $x^A f$ is essentially bounded on $\mathbb{R}_+$, and the norm of $f$ in that
space is $\|x^A f\|_{\infty}$.

%%%%%%%%%%%%%%%%%%%%%%%%%%%%%%%%%%%%%%%%%%%%%%%%%%%%%%%%%%%%%%
%%%%%%%%%%%%%%%%%%%%%%%%%%%%%%%%%%%%%%%%%%%%%%%%%%%%%%%%%%%%%%

\section{Technical preparation} \label{sec:tech}

In this section we gather facts, formulas, and results that will be needed to prove our main results.
Most of this material either comes from the existing literature or can easily be deduced from there.

%%%%%%%%%%%%%%%%%%%%%%%%%%%%%%%%%%%%%%%%%%%%%%%%%%%%%%%%%%%%%%
\subsection{Bessel functions} \label{ssec:Bes}
Facts and formulas presented in this subsection can be found, e.g., in \cite{Wat,Leb,handbook,lib}.
An important object in our study is the modified Bessel function $I_{\mu}$ of order $\mu$, which in this paper
is always considered as a function on $\mathbb{R}_+$, and the order is always assumed to satisfy $\mu > -1$.

The function $I_{\mu}(w)$ is strictly positive and smooth for $w>0$.
It has the series expansion
\begin{equation} \label{bes:ser}
I_{\mu}(w) = \sum_{n=0}^{\infty} \frac{(w/2)^{2n+\mu}}{\Gamma(n+1)\Gamma(n+\mu+1)},
\end{equation}
from which it is easily seen that
\begin{equation} \label{bes:lim}
w^{-\mu}I_{\mu}(w)\big|_{w=0^+} := \lim_{w \to 0^+} w^{-\mu}I_{\mu}(w) = \frac{1}{2^{\mu}\Gamma(\mu+1)}.
\end{equation}
Also, it follows that for small $w$
\begin{equation} \label{bes:as0}
I_{\mu}(w) = \frac{1}{2^{\mu}\Gamma(\mu+1)} w^{\mu} + \mathcal{O}\big( w^{\mu+2}\big).
\end{equation}
On the other hand, the large argument asymptotic is
\begin{equation} \label{bes:inf}
I_{\mu}(w) = {e^{w}}\bigg(\frac{1}{\sqrt{2\pi w}} + \mathcal{O}\Big( \frac{1}{w^{3/2}}\Big) \bigg).
\end{equation}
Thus, in particular, for any fixed $A>0$
\begin{equation} \label{bes:est}
I_{\mu}(w) \simeq w^{\mu}, \quad w \in (0,A] \qquad \textrm{and} \qquad
I_{\mu}(w) \simeq w^{-1/2}e^w, \quad w \in [A,\infty).
\end{equation}

The differentiation rule for $I_{\mu}$ is
\begin{equation} \label{bes:dif}
\frac{d}{dw} \big( w^{-\mu} I_{\mu}(w) \big) = z^{-\mu} I_{\mu+1}(w).
\end{equation}
Another fundamental formula is the order recurrence relation
\begin{equation} \label{bes:rec}
\frac{2\mu}{w} I_{\mu}(w) = I_{\mu-1}(w) - I_{\mu+1}(w), \qquad \mu > 0.
\end{equation}

Finally, note that only for odd half-integer orders $I_{\mu}$ can be expressed directly via elementary functions.
In particular,
\begin{equation} \label{bes:el}
I_{-1/2}(w) = \sqrt{\frac{2}{\pi w}} \cosh w, \qquad I_{1/2}(w) = \sqrt{\frac{2}{\pi w}} \sinh w.
\end{equation}

%%%%%%%%%%%%%%%%%%%%%%%%%%%%%%%%%%%%%%%%%%%%%%%%%%%%%%%%%%%%%%
\subsection{Hardy type operators} \label{ssec:Hardy}

For a parameter $\xi \in \mathbb{R}$, consider the following Hardy type operator and its dual:
\begin{align*}
H_0^{\xi}f(x) & = x^{-\xi-1} \int_0^{x} f(y) y^{\xi}\, dy, \qquad x > 0, \\
H_{\infty}^{\xi}f(x) & = x^{\xi} \int_x^{\infty} f(y) y^{-\xi-1}\, dy, \qquad x > 0.
\end{align*}
Mapping properties of $H_0^{\xi}$ and $H_{\infty}^{\xi}$ are essential for our developments.
The next two lemmas give characterizations of power-weighted strong, weak and restricted weak type boundedness of
$H_0^{\xi}$ and $H_{\infty}^{\xi}$.
Weaker statements, providing only sufficiency parts in restricted ranges of $\xi$, can be found, e.g., in
\cite{CRH,HSTV,BHNV}. Nonetheless, crucial parts of Lemmas \ref{lem:H0} and \ref{lem:Hinf} are rather straightforward
consequences of results found in \cite{AM}, see also references given there.

\begin{lem} \label{lem:H0}
Let $\xi,\delta \in \mathbb{R}$ and $1 \le p < \infty$.
Consider $H_0^{\xi}$ on the measure space $(\mathbb{R}_+,x^{\delta}dx)$. Then
\begin{itemize}
\item[(a)] $H_0^{\xi}$ is of strong type $(p,p)$ if and only if $\delta < (\xi+1)p-1$;
\item[(b)] $H_0^{\xi}$ is of weak type $(p,p)$ if and only if $\delta < (\xi+1)p-1$, with the inequality weakened in case
	$p=1$ and $\xi \neq -1$;
\item[(c)] $H_0^{\xi}$ is of restricted weak type $(p,p)$ if and only if 
	$\delta \le (\xi+1)p-1$, with the inequality strictened in case $\xi = -1$.
\end{itemize}
Moreover, $H_0^{\xi}$ is of strong type $(\infty,\infty)$ if and only if $\xi > -1$.
\end{lem}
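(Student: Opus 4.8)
The plan is to treat the $L^p$ cases ($1 \le p < \infty$) by reducing to a one-parameter family of integral operators with a homogeneous-type kernel, and to handle the $L^\infty$ statement separately by a direct estimate. First I would reformulate: by the substitution $f(y) = g(y) y^{-\delta/p}$ and absorbing the weight, each of the three boundedness notions for $H_0^\xi$ on $(\mathbb R_+, x^\delta dx)$ becomes the corresponding notion for an operator of the form $g \mapsto x^{-a} \int_0^x g(y) y^{a-1}\,dy$ (equivalently a convolution operator on $(\mathbb R_+, dy/y)$ after the change $y = e^s$), where $a = a(\xi,\delta,p) = (\xi+1) - (\delta+1)/p$. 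The critical relation $\delta < (\xi+1)p - 1$ is exactly $a > 0$, and the borderline $\delta = (\xi+1)p-1$ is $a = 0$. So the whole lemma reduces to: this model operator is strong/weak/restricted weak type $(p,p)$ on $(\mathbb R_+, dy/y)$ iff $a > 0$ (resp.\ the stated weakenings/strictenings at $a=0$). This is precisely the Hardy-inequality dichotomy and, as the text notes, follows from the weighted Hardy criteria in \cite{AM}; the sufficiency direction for $a>0$ is also classical (Schur test with test function $y^{\epsilon}$ for small $\epsilon>0$, or Minkowski's integral inequality applied to the homogeneous kernel).

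For the necessity directions at and past the borderline I would test against explicit near-extremizers. To see strong and weak type $(p,p)$ fail when $a < 0$, and strong type fail when $a = 0$, plug in $f = \chi_{(0,t)} y^{\xi}$ or truncated power functions $f(y) = y^{c}\chi_{(1,R)}(y)$ and compute both sides, letting $R \to \infty$; the logarithmic divergence at $a=0$ kills strong type while leaving restricted weak type alive, which accounts for the ``$\le$'' in (c). The special role of $\xi = -1$ (where restricted weak type also fails at the borderline, and where the $p=1$ weak-type weakening is \emph{not} available) comes from the fact that for $\xi = -1$ the operator $H_0^{-1}f(x) = x^{-1}\int_0^x f$ has kernel $x^{-1}\chi_{\{y<x\}}$ with the inner integral $\int_0^x y^{-1}\,dy$ already divergent; testing on $f = \chi_{(0,t)}$ produces an extra logarithm that is absent for $\xi \neq -1$. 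I would isolate this case by hand. Likewise the $p = 1$, $\xi \neq -1$ weakening in (b) is the familiar phenomenon that Hardy's operator maps $L^1 \to L^1$ exactly at the endpoint $\delta = \xi$ (not merely below it), checked directly from $\int_0^\infty |H_0^\xi f(x)|\,x^\delta\,dx = \int_0^\infty |f(y)| y^\xi \big(\int_y^\infty x^{\delta-\xi-1}\,dx\big)\,dy$ by Tonelli, the inner integral being finite and constant precisely when $\delta = \xi$.

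Finally, the strong type $(\infty,\infty)$ claim: if $f \in L^\infty(dx)$ with $\|f\|_\infty = 1$ then $|H_0^\xi f(x)| \le x^{-\xi-1}\int_0^x y^\xi\,dy$, which equals $\frac{1}{\xi+1}$ when $\xi > -1$ and diverges otherwise; conversely, for $\xi \le -1$ testing on $f \equiv 1$ shows $H_0^\xi 1 \equiv +\infty$ (or the defining integral diverges), so boundedness on $L^\infty$ fails. The main obstacle I anticipate is not any single estimate but organizing the borderline bookkeeping cleanly — getting all four corner cases ($p=1$ vs.\ $p>1$, $\xi = -1$ vs.\ $\xi \neq -1$, at $a=0$ vs.\ $a<0$) to line up with the precise ``weakened/strictened'' phrasing in (a)–(c) — and making sure the reduction to the model operator is stated uniformly enough that the $L^{p,1} \to L^{p,\infty}$ restricted weak type assertion can be read off from \cite{AM} rather than reproved from scratch.
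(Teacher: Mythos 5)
Your overall architecture (cite \cite{AM} for the strong and weak type characterizations, indicator-function counterexamples for necessity, separate treatment of the corner cases) matches the paper's, but two of the delicate borderline claims are handled incorrectly or not at all.

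First, your justification of the $p=1$, $\xi\neq -1$ weakening in (b) is wrong. The Tonelli identity you write gives
$\int_0^\infty |H_0^\xi f|\,x^\delta dx \le \int_0^\infty |f(y)|\,y^\xi \big(\int_y^\infty x^{\delta-\xi-1}dx\big)dy$,
and the inner integral is finite precisely when $\delta<\xi$ (equal to $y^{\delta-\xi}/(\xi-\delta)$), while at $\delta=\xi$ it \emph{diverges}. So $H_0^\xi$ is \emph{not} bounded on $L^1(x^\xi dx)$ --- indeed that would contradict part (a) of the very lemma you are proving. What holds at $\delta=\xi$ is only the weak type $(1,1)$ estimate, which follows from the pointwise bound $|H_0^\xi f(x)|\le x^{-\xi-1}\|f\|_{L^1(y^\xi dy)}$ and a computation of the distribution function of $x^{-\xi-1}$ with respect to $x^\xi dx$ (this is where $\xi\neq-1$ enters). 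This needs to be replaced.

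Second, your reduction to the unweighted model operator on $(\mathbb{R}_+,dy/y)$ via $f=y^{-(\delta+1)/p}g$ is an isometry of $L^p$ norms but does \emph{not} preserve distribution functions (the conjugating factor $x^{(\delta+1)/p}$ distorts level sets), so weak type and restricted weak type do not transfer along it. For (b) you must therefore apply the two-weight weak type criteria of \cite{AM} directly, keeping the weights, which is what the paper does (recognizing $(x^\delta,x^{-\xi p+\delta})$ as a weak type weight pair for $P_{\xi+1}$). Moreover, the restricted weak type sufficiency at the borderline $\delta=(\xi+1)p-1$, $\xi\neq-1$, $p>1$, cannot be ``read off'' from \cite{AM}: it requires its own short argument. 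The paper's is H\"older's inequality in Lorentz spaces applied to $|H_0^\xi f(x)|\le x^{-\xi-1}\int_0^\infty |f|\,y^\xi dy$, using that $x^{-\xi-1}\in L^{p,\infty}(x^\delta dx)$ and $x^{\xi-\delta}\in L^{p',\infty}(x^\delta dx)$ exactly at this $\delta$; you should supply this (or an equivalent) step. Finally, note that your proposed necessity test functions $\chi_{(0,t)}y^\xi$ and $y^c\chi_{(1,R)}$ are not indicators, so they cannot disprove \emph{restricted} weak type; the paper's single choice $f=\chi_{(1,2)}$, with $H_0^\xi f(x)\simeq x^{-\xi-1}$ for large $x$, covers all the necessity claims in (c) at once.
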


\begin{lem} \label{lem:Hinf}
Let $\xi,\delta \in \mathbb{R}$ and $1 \le p < \infty$.
Consider $H_{\infty}^{\xi}$ on the measure space $(\mathbb{R}_+,x^{\delta}dx)$. Then
\begin{itemize}
\item[(a)] $H_{\infty}^{\xi}$ is of strong type $(p,p)$ if and only if $-\xi p -1 < \delta$;
\item[(b)] $H_{\infty}^{\xi}$ is of weak type $(p,p)$ if and only if $-\xi p -1 < \delta$, with the inequality
	weakened in case $p=1$ and $\xi \neq 0$;
\item[(c)] $H_{\infty}^{\xi}$ is of restricted weak type $(p,p)$ if and only if $-\xi p -1 \le \delta$, with
	the inequality strictened in case $\xi = 0$.
\end{itemize}
Moreover, $H_{\infty}^{\xi}$ is of strong type $(\infty,\infty)$ if and only if $\xi > 0$.
\end{lem}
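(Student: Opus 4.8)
\medskip
\noindent\textbf{Proof plan.}
The plan is to deduce Lemma \ref{lem:Hinf} from Lemma \ref{lem:H0} by means of the reflection $x \mapsto 1/x$ on $\mathbb{R}_+$, which swaps the roles of $0$ and $\infty$ and therefore converts the ``$\infty$'' Hardy operator into a ``$0$'' Hardy operator. Let $R$ be the involution $Rf(x) = f(1/x)$. First I would carry out the substitution $y = 1/t$ in the integral defining $H_{\infty}^{\xi}$ to obtain, for nonnegative measurable $f$, the conjugation identity
$$
\big(H_{\infty}^{\xi} f\big)(1/s) = s^{-\xi} \int_0^{s} f(1/t)\, t^{\xi - 1}\, dt = \big(H_0^{\xi - 1}(Rf)\big)(s), \qquad s > 0,
$$
that is, $H_{\infty}^{\xi} = R \circ H_0^{\xi-1} \circ R$.

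The second ingredient is that $x \mapsto 1/x$ is a measure-preserving bijection from $(\mathbb{R}_+, x^{\delta}dx)$ onto $(\mathbb{R}_+, x^{-\delta - 2} dx)$: under $s = 1/x$ one has $ds = dx/x^2$, so $\int_E x^{\delta}\, dx = \int_{\{1/x : x \in E\}} s^{-\delta - 2}\, ds$ for every measurable $E \subseteq \mathbb{R}_+$. Consequently $R$ is an isometric isomorphism between the Lorentz spaces $L^{p,q}(\mathbb{R}_+, x^{\delta} dx)$ and $L^{p,q}(\mathbb{R}_+, x^{-\delta - 2} dx)$ for all $1 \le p \le \infty$ and $1 \le q \le \infty$ (for $p = \infty$ note that $x^{-\delta-2}dx$ and $dx$ have the same null sets). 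Combining this with the conjugation identity, $H_{\infty}^{\xi}$ is of strong, weak, or restricted weak type $(p,p)$ with respect to $(\mathbb{R}_+, x^{\delta} dx)$ if and only if $H_0^{\xi - 1}$ has the corresponding property with respect to $(\mathbb{R}_+, x^{-\delta - 2} dx)$; likewise for strong type $(\infty,\infty)$ on $L^{\infty}(dx)$.

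It then remains to substitute $\xi \rightsquigarrow \xi - 1$ and $\delta \rightsquigarrow -\delta - 2$ in the assertions of Lemma \ref{lem:H0} and simplify. For instance, the strong type condition $\delta < (\xi + 1)p - 1$ of Lemma \ref{lem:H0}(a) turns into $-\delta - 2 < \xi p - 1$, i.e.\ $-\xi p - 1 < \delta$, which is exactly part (a) here; the exceptional cases $\xi = -1$ and $\xi \ne -1$ appearing in Lemma \ref{lem:H0}(b),(c) become $\xi = 0$ and $\xi \ne 0$; and $\xi - 1 > -1$ in the $L^{\infty}$ statement reads $\xi > 0$. This last part is purely bookkeeping. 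The only point that requires a little care — rather than a genuine obstacle — is to make sure the reflection really preserves the fine Lorentz structure, so that the restricted weak type equivalence, phrased through $L^{p,1} \to L^{p,\infty}$, transfers cleanly, and to carry the endpoint and exceptional cases through the parameter substitution without sign errors. (Alternatively, one could argue directly: sufficiency via Minkowski's integral inequality together with scaling, and sharpness by testing on power functions $x^{-a}$ restricted to $(0,1)$ or to $(1,\infty)$ and their Lorentz-space variants; but routing through Lemma \ref{lem:H0} avoids duplicating that work. One may also note the formal adjoint relation $(H_0^{\xi})^{*} = H_{\infty}^{\xi - \delta}$ with respect to the $x^{\delta}dx$ pairing, though it is less convenient here since it couples $\xi$ with $\delta$ and does not by itself deliver the weak and restricted weak type endpoints.)
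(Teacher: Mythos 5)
Your proposal is correct, and every step checks out: the conjugation identity $H_{\infty}^{\xi}=R\circ H_{0}^{\xi-1}\circ R$ follows from the substitution $y=1/t$ exactly as you write, the reflection carries $(\mathbb{R}_+,x^{\delta}dx)$ isometrically (at the level of distribution functions, hence of all Lorentz norms) onto $(\mathbb{R}_+,x^{-\delta-2}dx)$, and the substitution $\xi\rightsquigarrow\xi-1$, $\delta\rightsquigarrow-\delta-2$ turns each clause of Lemma \ref{lem:H0} into the corresponding clause of Lemma \ref{lem:Hinf}, including the exceptional cases ($\eta=-1$ becoming $\xi=0$) and the $(\infty,\infty)$ statement. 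However, this is a genuinely different route from the paper's. The paper does not reduce Lemma \ref{lem:Hinf} to Lemma \ref{lem:H0}; it runs a parallel direct argument: items (a) and (b) are obtained by recognizing $H_{\infty}^{\xi}$ as a weighted instance of the Andersen--Muckenhoupt dual Hardy operators $Q_0$ and $Q_{-\xi}$ and invoking the corresponding theorems of \cite{AM}, while (c) is handled by the explicit counterexample $f=\chi_{(1,2)}$ (giving $H_{\infty}^{\xi}f(x)\simeq x^{\xi}$ near $0$) for necessity and by H\"older's inequality in Lorentz spaces for the endpoint $\delta=-\xi p-1$, $\xi\neq 0$, $p>1$. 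Your reduction buys economy and safety: all the endpoint and exceptional-case analysis is done once, in Lemma \ref{lem:H0}, and Lemma \ref{lem:Hinf} becomes pure bookkeeping, with no need to re-invoke \cite{AM} or to rebuild the restricted weak type argument; it also makes transparent why the two lemmas are mirror images of each other. The paper's parallel treatment, on the other hand, keeps each lemma self-contained and directly traceable to the cited results of \cite{AM} for both the $P$ and $Q$ families. The one point your write-up rightly flags --- that $R$ preserves the $L^{p,1}\to L^{p,\infty}$ formulation of restricted weak type --- is indeed the only place where care is needed, and it holds because $R$ maps characteristic functions to characteristic functions and is a Lorentz-space isometry between the two power-weighted measures.
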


For the sake of brevity, in the proofs of Lemmas \ref{lem:H0} and \ref{lem:Hinf} we shall tacitly use some
notation and terminology from \cite{AM}, like e.g.\ operators $P_{\xi}$ and $Q_{\xi}$.

\begin{proof}[{Proof of Lemma \ref{lem:H0}}]
Item (a) follows from \cite[Theorem A]{AM}. Indeed, it suffices to notice that
$H_{0}^{\xi}$ is of strong type $(p,p)$ with respect to $(\mathbb{R}_+,x^{\delta}dx)$ if and only if
$(x^{-(\xi+1)p+\delta},x^{-\xi p + \delta})$ is a strong type $(p,p)$ weight pair for $P_0$.

Item (b) is a consequence of \cite[Theorems 1,2]{AM}. To see this, observe that
$H_{0}^{\xi}$ is of weak type $(p,p)$ with respect to $(\mathbb{R}_+,x^{\delta}dx)$ if and only if
$(x^{\delta},x^{-\xi p + \delta})$ is a weak type $(p,p)$ weight pair for $P_{\xi+1}$.

Concerning item (c), necessity of the condition is shown by a simple counterexample. Let $f = \chi_{(1,2)}$.
Clearly, $f \in L^p(x^{\delta}dx)$ for any $\delta \in \mathbb{R}$. On the other hand,
$H_0^{\xi}f(x) \simeq x^{-\xi-1}$ for large $x$, which implies that $H_0^{\xi}f$ does not belong to weak $L^p(x^{\delta}dx)$
unless $\delta \le (\xi+1)p-1$ ($<$ in case $\xi=-1$).
It remains to check that $H_0^{\xi}$ is of restricted weak type $(p,p)$ with respect to $(\mathbb{R}_+,x^{\delta}dx)$
for $\delta = (\xi+1)p -1$, $\xi \neq -1$, $p > 1$. This can be done by means of H\"older's inequality
in Lorentz spaces. Indeed, we have
$$
\big|H_0^\xi f(x)\big| \le x^{-\xi-1}\int_0^\infty |f(y)|y^\xi \,dy \le x^{-\xi-1}\big\|x^{\xi-\delta}
	\big\|_{L^{p', \infty}(x^\delta dx)}	\|f\|_{L^{p, 1}(x^\delta dx)}
$$
and, consequently,
$$
\big\|H_0^\xi f\big\|_{L^{p, \infty}(x^\delta dx)}\le \big\|x^{-\xi-1}\big\|_{L^{p, \infty}(x^\delta dx)}
		\big\|x^{\xi-\delta} \big\|_{L^{p', \infty}(x^\delta dx)}\|f\|_{L^{p, 1}(x^\delta dx)}.
$$
Since $x^{-\xi-1}\in L^{p, \infty}(x^\delta dx)$ and $x^{\xi-\delta}\in L^{p', \infty}(x^\delta dx)$, we get the desired
boundedness.

Finally, the assertion about strong type $(\infty,\infty)$ is easily verified directly.
\end{proof}

\begin{proof}[{Proof of Lemma \ref{lem:Hinf}}]
The reasoning is parallel to that from the proof of Lemma \ref{lem:H0}.
Item (a) is deduced from \cite[Theorem B]{AM}, since $H_{\infty}^{\xi}$ is of strong type $(p,p)$ with respect to
$(\mathbb{R}_+,x^{\delta}dx)$ if and only if $(x^{\xi p + \delta}, x^{(\xi+1)p+\delta})$ is a strong type $(p,p)$
weight pair for $Q_0$. Likewise, item (b) follows from \cite[Theorems 4,5]{AM}, taking into account that
$H_{\infty}^{\xi}$ is of weak type $(p,p)$ with respect to $(\mathbb{R}_+,x^{\delta}dx)$ if and only if
$(x^{\delta},x^{(\xi+1)p+\delta})$ is a weak type $(p,p)$ weight pair for $Q_{-\xi}$.

Necessity in (c) follows by the same counterexample as before, $f = \chi_{(1,2)} \in L^p(x^{\delta}dx)$. Then
$H_{\infty}^{\xi}f(x) \simeq x^{\xi}$ for small $x > 0$, consequently $H_{\infty}^{\xi}f$ is not in weak $L^p(x^{\delta}dx)$
unless $\delta \ge -\xi p -1$ ($>$ in case $\xi=0$). Sufficiency in (c) reduces to checking that $H_{\infty}^{\xi}$
is of restricted weak type $(p,p)$ with respect to $(\mathbb{R}_+,x^{\delta}dx)$ for $\delta=-\xi p-1$, $\xi \neq 0$, $p>1$,
and this is obtained by H\"older's inequality in Lorentz spaces, similarly as in the proof of Lemma \ref{lem:H0}.

The assertion about strong type $(\infty,\infty)$ is again easily verified directly.
\end{proof}

We will also need variants of $H_0^1$ and $H_{\infty}^{-1}$ involving logarithms. Define
\begin{align*}
H_0^{1,\log}f(x) & = \frac{1}{x^2} \int_0^x \log\frac{x}y\, f(y)\, y\, dy, \qquad x > 0,\\
H_{\infty}^{-1,\log}f(x) & = \frac{1}{x} \int_x^{\infty} \log\frac{y}x\, f(y)\, dy, \qquad x > 0.
\end{align*}

\begin{lem} \label{lem:Hlog}
Let $\delta \in \mathbb{R}$ and $1 \le p < \infty$.
\begin{itemize}
\item[(a)]
For $\delta < 2p-1$,
$H_0^{1,\log}$ is bounded on $L^p(\mathbb{R}_+,x^{\delta}dx)$.
For $\delta \ge 2p-1$ it is not of restricted weak type $(p,p)$ with respect to $(\mathbb{R}_+,x^{\delta}dx)$.
\item[(b)]
For $\delta > p-1$, $H_{\infty}^{-1,\log}$ is bounded on $L^p(\mathbb{R}_+,x^{\delta}dx)$.
For $\delta \le p-1$ it is not of restricted weak type $(p,p)$ with respect to $(\mathbb{R}_+,x^{\delta}dx)$.
\end{itemize}
\end{lem}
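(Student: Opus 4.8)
The plan is to derive the positive (boundedness) parts by comparison with the plain Hardy operators from Lemmas \ref{lem:H0} and \ref{lem:Hinf}, exploiting the fact that the logarithmic factor grows more slowly than any positive power. For part (a), write $\log(x/y) = \log x - \log y$ for $0<y<x$, and for $0 < y < x$ estimate $\log(x/y) \le \frac{1}{\varepsilon}(x/y)^{\varepsilon}$ for any $\varepsilon>0$ (valid since $\log t \le t^{\varepsilon}/\varepsilon$ for $t\ge 1$). This gives the pointwise domination
$$
0 \le H_0^{1,\log}f(x) \le \frac{1}{\varepsilon}\, x^{-2+\varepsilon} \int_0^x |f(y)|\, y^{1-\varepsilon}\, dy = \frac{1}{\varepsilon}\, H_0^{1-\varepsilon}|f|(x), \qquad f \ge 0,
$$
i.e.\ $H_0^{1,\log}$ is controlled by $H_0^{\xi}$ with $\xi = 1-\varepsilon$. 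By Lemma \ref{lem:H0}(a), $H_0^{1-\varepsilon}$ is of strong type $(p,p)$ on $(\mathbb{R}_+,x^{\delta}dx)$ precisely when $\delta < (2-\varepsilon)p - 1$; since $\delta < 2p-1$ is assumed, we may choose $\varepsilon>0$ small enough that $\delta < (2-\varepsilon)p-1$ still holds, and the desired $L^p$-boundedness follows. Part (b) is handled symmetrically: for $0 < x < y$ one has $\log(y/x) \le \frac{1}{\varepsilon}(y/x)^{\varepsilon}$, hence
$$
0 \le H_{\infty}^{-1,\log}f(x) \le \frac{1}{\varepsilon}\, x^{-1-\varepsilon} \int_x^{\infty} |f(y)|\, y^{\varepsilon}\, dy = \frac{1}{\varepsilon}\, H_{\infty}^{-1+\varepsilon}|f|(x),
$$
and Lemma \ref{lem:Hinf}(a) gives strong type $(p,p)$ of $H_{\infty}^{-1+\varepsilon}$ on $(\mathbb{R}_+,x^{\delta}dx)$ when $-(-1+\varepsilon)p - 1 < \delta$, i.e.\ $(1-\varepsilon)p - 1 < \delta$; since $\delta > p-1$, a sufficiently small $\varepsilon$ works.

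For the negative parts, I would produce explicit counterexamples showing the failure of restricted weak type $(p,p)$ when $\delta \ge 2p-1$ (resp.\ $\delta \le p-1$), which is the genuinely delicate endpoint to pin down. For part (a), take $f = \chi_{(0,r)}$ with $r$ large (so $f \in L^p(x^{\delta}dx)$ iff $\delta > -1$, which holds as $\delta \ge 2p-1 \ge 1$); then for $x > r$,
$$
H_0^{1,\log}f(x) = \frac{1}{x^2}\int_0^r \log\frac{x}{y}\, y\, dy \simeq \frac{\log x}{x^2} \qquad (x \to \infty),
$$
and one checks that $x \mapsto x^{-2}\log x$ does not lie in weak $L^p(x^{\delta}dx)$ once $\delta \ge 2p-1$: the distribution function of $x^{-2}\log x$ at level $\lambda$ behaves, up to the logarithmic correction, like that of $x^{-2}$, and the extra $\log$ destroys the borderline estimate exactly at $\delta = 2p-1$, while for $\delta > 2p-1$ the power $x^{-2}$ already fails. (For $\delta \ge 2p-1$ strictly bigger than $2p-1$, failure is inherited from $H_0^{1}$ itself by Lemma \ref{lem:H0}(c), since $H_0^{1,\log}f \gtrsim c\, H_0^{1}f$ for, say, $f$ supported away from a neighbourhood of the relevant scale; so only the single value $\delta = 2p-1$ requires the logarithmic sharpening.) Part (b) is analogous with $f = \chi_{(r,\infty)}$ or $f = \chi_{(1,2)}$ and the local behaviour $H_{\infty}^{-1,\log}f(x) \simeq x^{-1}\log(1/x)$ as $x \to 0^+$, which fails to be in weak $L^p(x^{\delta}dx)$ when $\delta \le p-1$.

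The main obstacle is the endpoint analysis at the single critical value of $\delta$, namely $\delta = 2p-1$ in (a) and $\delta = p-1$ in (b): here the plain power $x^{-2}$ (resp.\ $x^{-1}$) is exactly in weak $L^p$, and one must verify that multiplying by $\log x$ (resp.\ $\log(1/x)$) pushes it out of weak $L^p$, which amounts to a direct computation of the distribution function $|\{x : x^{-2}\log x > \lambda\}|$ against the measure $x^{\delta}dx$ and checking that $\lambda^p$ times this quantity is unbounded as $\lambda \to 0^+$ (equivalently $x\to\infty$). This is elementary but must be done carefully, since it is precisely the logarithm that makes the statement non-trivial; the rest of the lemma reduces cleanly to the already-established Lemmas \ref{lem:H0} and \ref{lem:Hinf}.
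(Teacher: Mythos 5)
Your proposal is correct and follows essentially the same route as the paper: the boundedness parts are obtained by dominating the logarithm by a small power, so that $H_0^{1,\log}\lesssim H_0^{1-\varepsilon}$ and $H_\infty^{-1,\log}\lesssim H_\infty^{-1-\varepsilon}$ (note that your dominating operator $x^{-1-\varepsilon}\int_x^\infty |f(y)|\,y^\varepsilon\,dy$ is $H_\infty^{-1-\varepsilon}$, not $H_\infty^{-1+\varepsilon}$; the condition from Lemma \ref{lem:Hinf}(a) is then $(1+\varepsilon)p-1<\delta$, which still holds for $\varepsilon$ small since $\delta>p-1$ is strict), and the negative parts come from characteristic-function counterexamples producing $x^{-2}\log x$ at infinity and $x^{-1}\log(1/x)$ near the origin, exactly as in the paper (which uses $\chi_{(1/2,1)}$ and $\chi_{(1,2)}$; avoid $\chi_{(r,\infty)}$ in (b), which has infinite $x^\delta dx$-measure unless $\delta<-1$). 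The one step you correctly identify but do not carry out is the endpoint computation: the paper solves $y^{-2}\log y=\lambda$ asymptotically, $y\sim\lambda^{-1/2}\log^{1/2}(1/\sqrt{\lambda})$, giving $\lambda^p\int_{\{y>2:\,y^{-2}\log y>\lambda\}}x^\delta\,dx\gtrsim\lambda^{p-(\delta+1)/2}\bigl(\log(1/\lambda)\bigr)^{(\delta+1)/2}$, which is unbounded as $\lambda\to0^+$ precisely when $\delta\ge 2p-1$, with the analogous computation at the origin for (b) (where the case $\delta\le-1$ is immediate since the measure of the relevant set is infinite).
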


\begin{proof}
Boundedness on $L^p$ as stated in (a) and (b) follows from Lemmas \ref{lem:H0} and \ref{lem:Hinf}, since
$H_0^{1,\log}$ is controlled by $H_0^{1-\varepsilon}$ for any fixed $\varepsilon > 0$ and, similarly,
$H_{\infty}^{-1,\log}$ is controlled by $H_{\infty}^{-1-\varepsilon}$.

To disprove the restricted weak type we will give counterexamples. We shall show that the weak type $(p,p)$
inequality fails either for $f_1 = \chi_{(1/2,1)}$ or for $f_2 = \chi_{(1,2)}$ (actually, here the only reason for taking
two different functions is simplicity of estimates that follow). Clearly, $f_1,f_2 \in L^p(x^{\delta}dx)$ for any $\delta$.

We have
$$
H_0^{1,\log}f_1(x) = \frac{1}{x^2}\int_{1/2}^1 \log\frac{x}y \,y \, dy \gtrsim \frac{\log x}{x^2}, \qquad x > 2,
$$
but the function $x \mapsto \chi_{(2,\infty)}(x) x^{-2}\log x$ does not belong to weak $L^p(x^{\delta}dx)$ when $\delta \ge 2p-1$.
Indeed, for sufficiently small $\lambda > 0$
$$
\lambda^{p} \int_{\{y > 2 : y^{-2}\log y > \lambda\}} x^{\delta}\, dx \ge \lambda^{p}
	\int_2^{\frac{\log^{1/2}\frac{1}{\sqrt{\lambda}}}{2\sqrt{\lambda}}} x^{\delta} \, dx \simeq
		\lambda^{p-(\delta+1)/2} \bigg( \log\frac{1}{\lambda}\bigg)^{(\delta+1)/2},
$$
and the last quantity is unbounded in $\lambda \to 0^+$ if $\delta \ge 2p-1$.
In the above we used the fact that asymptotically, as $\lambda \to 0^+$, the solution of $y^{-2}\log y = \lambda$
is $\frac{1}{\sqrt{\lambda}} \log^{1/2}\frac{1}{\sqrt{\lambda}}$.

For $H_{\infty}^{-1,\log}$ we write
$$
H_{\infty}^{-1,\log}f_2(x) = \frac{1}{x} \int_1^2 \log\frac{y}x\, dy \gtrsim \frac{1}x \log\frac{1}x, \qquad 0 < x < 1,
$$
and the function $x \mapsto \chi_{(0,1)}(x) \frac{1}x \log\frac{1}x$ is not in weak $L^p(x^{\delta}dx)$ when $\delta \le p-1$.
Indeed, for sufficiently large $\lambda$ one has
$$
\lambda^{p} \int_{\{y \in (0,1) : y^{-1}\log y^{-1} > \lambda \}} x^{\delta}\, dx \ge \lambda^p
	\int_0^{\frac{\log \lambda}{2\lambda}} x^{\delta}\, dx \simeq
		\begin{cases}
			\lambda^{p-\delta-1} \log^{\delta+1}\lambda, & \delta > -1, \\
			\infty, & \delta \le -1,
		\end{cases}
$$
and the last quantity is either infinite or unbounded in $\lambda \to \infty$ if $\delta \le p-1$.

The conclusion follows.
\end{proof}

Given $b > 0$, consider the following modifications of $H_0^{\xi}$ and $H_{\infty}^{\xi}$:
\begin{align*}
H_{0,b}^{\xi}f(x) & = x^{-\xi-1} \int_0^{x/b} f(y) y^{\xi}\, dy, \qquad x > 0, \\
H_{\infty,b}^{\xi}f(x) & = x^{\xi} \int_{b x}^{\infty} f(y) y^{-\xi-1}\, dy, \qquad x > 0.
\end{align*}
Logarithmic analogues $H_{0,b}^{1,\log}$ and $H_{\infty,b}^{-1,\log}$ are defined similarly.
The $b$-parametrized operators have the same mapping properties as their prototypes, see
Lemmas \ref{lem:H0}, \ref{lem:Hinf} and \ref{lem:Hlog}. Furthermore,
as the next result shows, they always differ from the prototypes by $L^p$-bounded operators.
\begin{pro} \label{prop:Hb}
Let $\xi,\delta \in \mathbb{R}$ and $1 \le p \le \infty$. Assume that $ b> 0$ is fixed. Then the operators
$$
H_{0}^{\xi}-H_{0,b}^{\xi}, \quad H_{\infty}^{\xi}-H_{\infty,b}^{\xi}, \quad
H_{0}^{1,\log} - H_{0,b}^{1,\log}, \quad H_{\infty}^{-1,\log}-H_{\infty,b}^{-1,\log}
$$
are bounded on $L^p(\mathbb{R}_+,x^{\delta}dx)$.
\end{pro}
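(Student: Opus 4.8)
The plan is to reduce all four cases to a single, elementary averaging operator. Fix $b>0$. The key observation is that each difference $H_{0}^{\xi}-H_{0,b}^{\xi}$, $H_{\infty}^{\xi}-H_{\infty,b}^{\xi}$, $H_{0}^{1,\log}-H_{0,b}^{1,\log}$, $H_{\infty}^{-1,\log}-H_{\infty,b}^{-1,\log}$ is, up to a sign, an integral over an interval $I_x$ whose endpoints are both comparable to the spatial variable $x$: for the ``$0$'' operators $I_x$ has endpoints $x$ and $x/b$, and for the ``$\infty$'' operators $I_x$ has endpoints $x$ and $bx$. Setting $c_1=\min\{1,1/b\}$, $c_2=\max\{1,1/b\}$ in the ``$0$'' cases and $c_1=\min\{1,b\}$, $c_2=\max\{1,b\}$ in the ``$\infty$'' cases, one has $I_x\subset[c_1x,c_2x]$, so that $y\simeq x$ uniformly for $y\in I_x$. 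Consequently $x^{-\xi-1}y^{\xi}\simeq x^{-1}$ and $x^{\xi}y^{-\xi-1}\simeq x^{-1}$ there (with constants depending only on $\xi$ and $b$); in the logarithmic cases one additionally uses that $y\simeq x$ absorbs $x^{-2}$ against the $y\,dy$ in $H_0^{1,\log}$, and that $|\log(x/y)|=|\log(y/x)|\le|\log b|$ on $I_x$. In every case this gives the pointwise bound
\[
\bigl|(H_{0}^{\xi}-H_{0,b}^{\xi})f(x)\bigr|\lesssim Sf(x):=\frac1x\int_{c_1x}^{c_2x}|f(y)|\,dy ,
\]
and exactly the same estimate, with the corresponding pair $c_1,c_2$, for the other three differences. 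Taking $c_1,c_2$ to be the smallest/largest of the relevant constants, all four differences are pointwise dominated by one operator $S$ of this form.

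It then suffices to show that $S$ is bounded on $L^p(\mathbb{R}_+,x^{\delta}dx)$ for every $1\le p\le\infty$ and every $\delta\in\mathbb{R}$. This is a consequence of scale invariance. Substituting $y=sx$ gives $Sf(x)=\int_{c_1}^{c_2}|f(sx)|\,ds$. For $1\le p<\infty$, Minkowski's integral inequality together with the dilation identity $\|f(s\,\cdot)\|_{L^p(x^{\delta}dx)}=s^{-(\delta+1)/p}\|f\|_{L^p(x^{\delta}dx)}$ (a one-line change of variables) yields
\[
\|Sf\|_{L^p(x^{\delta}dx)}\le\int_{c_1}^{c_2}\|f(s\,\cdot)\|_{L^p(x^{\delta}dx)}\,ds=\Bigl(\int_{c_1}^{c_2}s^{-(\delta+1)/p}\,ds\Bigr)\|f\|_{L^p(x^{\delta}dx)},
\]
and the $s$-integral is finite since $[c_1,c_2]$ is a compact subinterval of $(0,\infty)$, regardless of the sign of $\delta$. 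The case $p=\infty$ is handled in the same way: $\|x^{A}Sf\|_{\infty}\le\bigl(\int_{c_1}^{c_2}s^{-A}\,ds\bigr)\|x^{A}f\|_{\infty}$. Combining this with the pointwise domination from the first step proves the proposition.

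I do not expect a genuine obstacle here; the argument is routine. The only points requiring a little attention are the bookkeeping between the cases $b<1$ and $b>1$ (which merely swaps the roles of the endpoints of $I_x$) and, for the logarithmic operators, checking that the logarithmic factor is bounded on $I_x$ --- both settled at once by the observation that $y/x$ ranges over the fixed compact subinterval $[c_1,c_2]$ of $(0,\infty)$.
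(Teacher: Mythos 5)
Your proof is correct and follows essentially the same route as the paper, which simply reduces these differences to local averaging operators of the type $N$, $N^{\log}$ (integration over an interval with both endpoints comparable to $x$) and invokes their boundedness on every $L^p(x^{\delta}dx)$; your explicit dilation/Minkowski argument for $S$ is exactly the standard proof of that boundedness. No gaps.
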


\begin{proof}
See the arguments proving Lemma \ref{lem:NN} below, cf.\ \cite[p.\,125--126]{BHNV}.
\end{proof}

%%%%%%%%%%%%%%%%%%%%%%%%%%%%%%%%%%%%%%%%%%%%%%%%%%%%%%%%%%%%%%
\subsection{Auxiliary operators} \label{ssec:aux}

We will use the following auxiliary operators:
\begin{align*}
M^k_{\loc}f(x) & = \sup_{0 < u < x < v < ku} \frac{1}{v-u} \int_u^v |f(y)|\, dy, \qquad x > 0, \\
\mathcal{H}_{\nu,\loc}f(x) & = \frac{1}{\pi} \pv \int_{x/2}^{2x} \frac{(xy)^{-\nu-1/2}}{y-x} f(y)\, d\eta_{\nu}(y),
	\qquad x > 0, \\
\mathfrak{g}_{\nu,\loc}(f)(x) & = \bigg\| \int_{x/2}^{2x} (xy)^{-\nu-1/2}\frac{\partial}{\partial t}
	\mathcal{W}_t(x,y)f(y)\, d\eta_{\nu}(y) \bigg\|_{L^2(\mathbb{R}_+,tdt)}, \qquad x > 0, \\
T^{\xi}_{\psi}f(x) & = \sup_{t > 0}\bigg| x^{\xi} \int_x^{\infty} f(y) \psi(t,y) y^{-\xi-1}\, dy\bigg|, \qquad x > 0, \\
{N}f(x) & = \int_{x/2}^{2x} \frac{f(y)}{y}\, dy, \qquad x > 0, \\
N^{\log}f(x) & = \int_{x/2}^{2x} \frac{1}{y} \bigg( 1 + \log\frac{xy}{(x-y)^2} \bigg) f(y)\, dy, \qquad x > 0.
\end{align*}
Here $M_{\loc}^k$ is the local non-centered Hardy-Littlewood function (the parameter $k>1$), whereas
$\mathcal{H}_{\nu,\loc}$ and $\mathfrak{g}_{\nu,\loc}$
are extensions, in the parameter $\nu$, of the classical local Hilbert transform 
$\mathcal{H}_{-1/2,\loc}$ and the classical local vertical $g$-function $\mathfrak{g}_{-1/2,\loc}$.
In the $\mathfrak{g}_{\nu,\loc}$ formula $\mathcal{W}_t(x,y) = (4\pi t)^{-1/2}e^{-(x-y)^2/4t}$
is the classical Gauss-Weierstrass kernel.
The maximal operator $T_{\psi}^{\xi}$, for suitable $\psi$, is related to the dual Hardy
operator $H_{\infty}^{\xi}$. Notice that $|Nf(x)| \le N^{\log}|f|(x)$.

Mapping properties of these operators are essentially known, see \cite{NoSt,CRH,BHNV} and also references given there.
In particular, we have the following results.

\begin{lem} \label{lem:MH}
Let $k>1$ and $\nu,\delta \in \mathbb{R}$. Each of the operators $M_{\loc}^{k}$, $\mathcal{H}_{\nu,\loc}$
and $\mathfrak{g}_{\nu,\loc}$ is bounded on $L^p(\mathbb{R}_+,x^{\delta}dx)$, $1 < p < \infty$,
and from $L^1(\mathbb{R}_+,x^{\delta}dx)$ to weak $L^1(\mathbb{R}_+,x^{\delta}dx)$.
But none of these operators is bounded on $L^1(\mathbb{R}_+,x^{\delta}dx)$.
Moreover, $M_{\loc}^k$ is bounded on $L^{\infty}(\mathbb{R}_+)$.
\end{lem}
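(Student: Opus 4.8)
The plan is to treat the three operators separately, since each reduces to a classical, well-understood object after stripping off the power factors $x^{-\nu-1/2}$ and the measure $d\eta_\nu$.

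\textbf{Reduction to the Lebesgue setting.} For $\mathcal H_{\nu,\loc}$, writing $d\eta_\nu(y)=y^{2\nu+1}dy$ and pulling out the factors, one checks directly that
$$
\mathcal H_{\nu,\loc}f(x) = x^{-\nu-1/2} \, \widetilde{\mathcal H}_{\loc}\big( y^{\nu+1/2} f \big)(x), \qquad
\widetilde{\mathcal H}_{\loc}g(x) = \frac1\pi \pv\!\!\int_{x/2}^{2x} \frac{g(y)}{y-x}\, dy,
$$
and similarly $\mathfrak g_{\nu,\loc}(f)(x) = x^{-\nu-1/2}\,\widetilde{\mathfrak g}_{\loc}\big(y^{\nu+1/2}f\big)(x)$ with $\widetilde{\mathfrak g}_{\loc}$ the local vertical $g$-function built from $\mathcal W_t$. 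Since $g\mapsto y^{\nu+1/2}g$ is an isometry from $L^p(x^\delta dx)$ onto $L^p(x^{\delta-(\nu+1/2)p}dx)$ and $f\mapsto x^{-\nu-1/2}f$ maps $L^p(x^{\delta-(\nu+1/2)p}dx)$ back to $L^p(x^\delta dx)$ (with the analogous statement on weak $L^p$), boundedness of $\mathcal H_{\nu,\loc}$ and $\mathfrak g_{\nu,\loc}$ on $L^p(x^\delta dx)$ is \emph{equivalent} to boundedness of $\widetilde{\mathcal H}_{\loc}$, respectively $\widetilde{\mathfrak g}_{\loc}$, on $L^p(x^{\delta-(\nu+1/2)p}dx)$; the exponent changes but its range over all real $\delta$ does not, so it suffices to handle the case $\nu=-1/2$. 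For $M_{\loc}^k$ there is nothing to reduce.

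\textbf{The classical local estimates.} Now $\widetilde{\mathcal H}_{\loc}$ is the local part of the Hilbert transform and $\widetilde{\mathfrak g}_{\loc}$ the local part of the Littlewood--Paley $g$-function on $\mathbb R_+$, both with kernels supported in the Whitney-type region $y\in(x/2,2x)$. On such a region $x^\delta \simeq y^\delta$, so the weight $x^\delta dx$ is, locally, comparable to Lebesgue measure up to a harmless $A_\infty$-type bump; consequently the standard $L^p(dx)$, $1<p<\infty$, and weak $(1,1)$ bounds for the Hilbert transform and the $g$-function (Calder\'on--Zygmund theory) transfer to $L^p(x^\delta dx)$. Concretely one invokes the cited results \cite{NoSt,CRH,BHNV}, where exactly these local operators (or minor variants) are shown bounded on all power-weighted $L^p$, $1<p<\infty$, and of weak type $(1,1)$, with the failure of strong $(1,1)$ coming from the non-integrable logarithmic singularity of the kernel at $y=x$. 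For $M_{\loc}^k$ the $L^p$, $1<p<\infty$, and weak $(1,1)$ bounds on $x^\delta dx$ follow from the standard theory of the (local) Hardy--Littlewood maximal function together with the fact that $x^\delta$ is locally doubling and, on dyadic-type intervals, comparable to a constant; the $L^\infty$ bound is immediate since $M_{\loc}^k$ is a sup of averages.

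\textbf{Failure of strong $(1,1)$.} For the negative assertion one produces, for each operator, an explicit $f\in L^1(x^\delta dx)$ whose image is not in $L^1(x^\delta dx)$: taking $f=\chi_{(1,2)}$ one gets $M_{\loc}^k f(x)\gtrsim 1/x$ and $\widetilde{\mathcal H}_{\loc}f(x), \widetilde{\mathfrak g}_{\loc}f(x) \gtrsim |\log|x-2||$ near $x=2$ (and also tails of order $1/x$), neither of which is integrable against $x^\delta dx$ on a neighbourhood of the relevant point; transporting back through the isometry gives the claim for general $\nu$. I expect the only real work to be bookkeeping: verifying the conjugation identities above and locating the precise statements in \cite{NoSt,CRH,BHNV} that already contain the weighted local bounds. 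There is no genuine obstacle, since everything is local and thus governed by classical Calder\'on--Zygmund and maximal-function theory; the exotic parameter $\nu$ enters only through a reversible change of weight exponent.
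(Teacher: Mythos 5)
The positive half of your argument (strong type for $1<p<\infty$, weak type $(1,1)$, and the $L^\infty$ bound for $M^k_{\loc}$) is in the right spirit: the paper offers no proof of this lemma, only citations to \cite{NoSt,CRH,BHNV}, and those references do exactly what you describe — exploit locality of the kernels and the fact that $x^\delta\simeq y^\delta$ and $(xy)^{-\nu-1/2}y^{2\nu+1}\simeq x^{2\nu+1}\cdot x^{-\nu-1/2}\cdot x^{-\nu-1/2}\cdot$const on each dyadic block, reducing everything to the unweighted classical local Hilbert transform, $g$-function and maximal function. One caveat: your transference of the \emph{weak} type $(1,1)$ bound through the conjugation $f\mapsto x^{-\nu-1/2}S(y^{\nu+1/2}f)$ is asserted via ``the analogous statement on weak $L^p$,'' but multiplication by a power is \emph{not} in general an isomorphism between the corresponding weak spaces: $\|x^{-c}h\|_{L^{1,\infty}(x^{\delta}dx)}$ and $\|h\|_{L^{1,\infty}(x^{\delta-c}dx)}$ involve the sets $\{|h|>\lambda x^{c}\}$ and $\{|h|>\lambda\}$ respectively and are genuinely inequivalent (e.g.\ $h=\chi_{(1,\infty)}$, $c=1$, $\delta=0$). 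This is precisely the failure the authors flag after Proposition \ref{thm:trans}. The gap is repairable because the operators are local: decompose $\mathbb{R}_+$ into dyadic blocks \emph{first}, note that $Tf$ on a block depends only on $f$ on a bounded union of neighbouring blocks, and that on each block all the power factors are comparable to constants; then sum the unweighted weak $(1,1)$ estimates with bounded overlap. But as written the reduction step is not valid.

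The genuine error is in your ``Failure of strong $(1,1)$'' paragraph: the single function $f=\chi_{(1,2)}$ does not work for any of the three operators, precisely because they are local. The alleged $1/x$ tails are truncated: $M^k_{\loc}\chi_{(1,2)}$ is bounded by $1$ and supported in $(1/k,2k)$, and $\mathcal{H}_{\nu,\loc}\chi_{(1,2)}$ is supported in $(1/2,4)$ with only logarithmic singularities at $x=1$ and $x=2$ (one computes $\pv\int \frac{dy}{y-x}=\log\frac{|\min(2x,2)-x|}{|\max(x/2,1)-x|}$); a compactly supported function with integrable log singularities lies in every $L^1(x^\delta dx)$, so nothing is disproved. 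The correct argument — the one used in \cite[p.\,114--115]{BHNV} and reproduced in this paper for $\widetilde{W}_*^{\nu}$ and $\widetilde{R}_{\nu}$ — is quantitative: take the family $f_\epsilon=\chi_{(1,1+\epsilon)}$ and show, e.g., $M^k_{\loc}f_\epsilon(x)\gtrsim \epsilon/(x-1)$ for $1+2\epsilon<x<\min(2,k)$ (choose $u$ just below $1$ and $v$ just above $x$), and similarly $|\mathcal{H}_{\nu,\loc}f_\epsilon(x)|\gtrsim \epsilon/(x-1)$ there up to the harmless $N^{\log}$ error, and for $\mathfrak{g}_{\nu,\loc}$ restrict the $L^2(t\,dt)$ norm to $t\simeq(x-1)^2$. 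This yields $\|Tf_\epsilon\|_{L^1(x^\delta dx)}\gtrsim\epsilon\log(1/\epsilon)$ while $\|f_\epsilon\|_{L^1(x^\delta dx)}\simeq\epsilon$, and letting $\epsilon\to0^+$ disproves strong type $(1,1)$ for every $\delta$. Your proposal as it stands does not establish the negative assertion of the lemma.
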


The next lemma restricted to $\xi > -1$ is contained in \cite[Lemma 3.3]{CRH} (the restriction $t\in (0,1)$ in the definition
of $T_{\psi}^{\xi}$ there is not relevant). The extension for $\xi \le 0$ in (a) follows from the pointwise control
$T_{\psi}^{\xi}f(x) \lesssim H_{\infty}^{\xi}|f|(x)$ and Lemma \ref{lem:Hinf}(a), while in (b) it is trivial. In case of (c)
the argument justifying \cite[Lemma 3.3(c)]{CRH} simply goes through. % for $\xi \le -1$.
More precisely, in view of the above control and Lemma \ref{lem:Hinf}(b),
we only need to treat the case $\delta = -\xi p -1$, $1<p<\infty$,
$\xi < 0$, and this is done as in \cite[p.\,336]{CRH}.

\begin{lem} \label{lem:Tpsi}
Let $\xi,\delta \in \mathbb{R}$ and $\psi(t,y) = (y^2/t)^{\epsilon}e^{-cy^2/t}$ with some $c,\epsilon >0$ fixed.
Consider $T_{\psi}^{\xi}$ on the measure space $(\mathbb{R}_+,x^{\delta}dx)$. Then
\begin{itemize}
\item[(a)] $T_{\psi}^{\xi}$ is of strong type $(p,p)$ when $1<p<\infty$ and $-\xi p -1 < \delta$;
\item[(b)] $T_{\psi}^{\xi}$ is of strong type $(\infty,\infty)$ if $\xi \ge 0$;
\item[(c)] $T_{\psi}^{\xi}$ is of weak type $(p,p)$ when $1 \le p < \infty$ and $-\xi p -1 \le \delta$, %($<$ in case $\xi=0$).
	with the last inequality strictened in case $\xi=0$.
\end{itemize}
\end{lem}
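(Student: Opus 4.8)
The plan is to reduce items (a), and most of (b) and (c), to the Hardy-operator estimates of Lemma \ref{lem:Hinf} via a pointwise domination, and then to dispose of two borderline cases separately. First I would record the elementary fact that $s\mapsto s^{\epsilon}e^{-cs}$ is bounded on $\mathbb{R}_+$ (it vanishes as $s\to 0^+$ and is rapidly decreasing as $s\to\infty$), so that $\psi(t,y)=(y^2/t)^{\epsilon}e^{-cy^2/t}\le C$ uniformly in $t,y>0$. Hence
\[
T_{\psi}^{\xi}f(x)\le C\,x^{\xi}\int_x^{\infty}|f(y)|\,y^{-\xi-1}\,dy=C\,H_{\infty}^{\xi}|f|(x),\qquad x>0.
\]
Combining this with Lemma \ref{lem:Hinf}(a) gives (a) in full; combining it with Lemma \ref{lem:Hinf}(b) gives (c) in every case except the endpoint $\delta=-\xi p-1$ with $1<p<\infty$ and $\xi\neq 0$ (the domination already yields the weakened inequality when $p=1$, $\xi\neq 0$, and the strictened one at $\xi=0$). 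For (b), the same domination together with the strong type $(\infty,\infty)$ part of Lemma \ref{lem:Hinf} handles all $\xi>0$, leaving only $\xi=0$.

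Next I would settle the endpoint $\xi=0$ in (b). Mere boundedness of $\psi$ does not suffice here, since $H_{\infty}^{0}$ is not of strong type $(\infty,\infty)$, so I would use the decay of $\psi$ directly: for $f\in L^{\infty}(\mathbb{R}_+)$,
\[
T_{\psi}^{0}f(x)\le\|f\|_{\infty}\,\sup_{t>0}\int_x^{\infty}\Big(\tfrac{y^2}{t}\Big)^{\epsilon}e^{-cy^2/t}\,\frac{dy}{y},
\]
and the substitution $s=y^2/t$ turns the inner integral into $\tfrac12\int_{x^2/t}^{\infty}s^{\epsilon-1}e^{-cs}\,ds\le\tfrac12\int_0^{\infty}s^{\epsilon-1}e^{-cs}\,ds=\Gamma(\epsilon)/(2c^{\epsilon})<\infty$, a bound independent of $x$ and $t$. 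This proves strong type $(\infty,\infty)$ for $\xi=0$ and completes (b).

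It then remains to verify (c) at the endpoint $\delta=-\xi p-1$, $1<p<\infty$. For $\xi>0$ this is exactly \cite[Lemma 3.3(c)]{CRH}; the genuinely new instances are those with $\xi<0$, in particular $\xi\le -1$, which lies outside the range covered in \cite{CRH}. For these I would reproduce the endpoint argument of \cite[p.\,336]{CRH}, whose point is to exploit the full Gaussian profile of $\psi$ rather than its mere boundedness: after a dyadic decomposition of the parameter $t$, the supremum is controlled by a sum of weighted Hardy-type operators at the critical exponent, and the rapid decay of $\psi$ in the variable $y^2/t$ makes this sum geometric. I expect this to be the main obstacle of the proof, for two reasons: at $\delta=-\xi p-1$ the dominating operator $H_{\infty}^{\xi}$ just fails to be of weak type $(p,p)$, so the borderline estimate can only be recovered using the extra decay/smoothing built into $\psi$; and one must check that the reasoning of \cite{CRH}, written there under the standing assumption $\xi>-1$, carries over without change to all $\xi<0$.
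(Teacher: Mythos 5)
Your proposal is correct and follows essentially the same route as the paper: pointwise domination $T_{\psi}^{\xi}f \lesssim H_{\infty}^{\xi}|f|$ combined with Lemma \ref{lem:Hinf}, with the remaining endpoint $\delta=-\xi p-1$, $1<p<\infty$, $\xi<0$ in (c) deferred to the argument of \cite[p.\,336]{CRH}, exactly as the paper does. Your self-contained Gamma-function computation for $\xi=0$ in (b) is a nice (correct) touch, though the paper simply absorbs that case into the citation of \cite[Lemma 3.3]{CRH} for $\xi>-1$.
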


\begin{lem} \label{lem:NN}
Let $\delta \in \mathbb{R}$ and $1 \le p \le \infty$. Each of the operators $N$ and $N^{\log}$ is bounded on
$L^p(\mathbb{R}_+,x^{\delta}dx)$.
\end{lem}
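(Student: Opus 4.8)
The plan is to reduce everything to a single application of Schur's test. By the pointwise bound $|Nf(x)| \le N^{\log}|f|(x)$ noted above, it suffices to prove the assertion for $N^{\log}$. Writing $N^{\log}f(x) = \int_0^{\infty} K(x,y) f(y)\,dy$ with kernel
$$
K(x,y) = \frac{1}{y}\Big(1 + \log\tfrac{xy}{(x-y)^2}\Big)\chi_{\{x/2 < y < 2x\}},
$$
I would invoke the following weighted form of Schur's test: an integral operator with nonnegative kernel $K$ is bounded on $L^p(\mathbb{R}_+, w\,dx)$ for every $1 \le p \le \infty$ provided $\int_0^{\infty} K(x,y)\,dy \le A$ for a.e.\ $x$ and $\int_0^{\infty} K(x,y)\,w(x)/w(y)\,dx \le B$ for a.e.\ $y$; here $w(x) = x^{\delta}$. (This is Schur's lemma applied to the kernel $K(x,y)/w(y)$ relative to the measure $w\,dx$, with a constant Schur function.)

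The first condition is checked via the substitution $y = xs$, which turns $\int_0^{\infty} K(x,y)\,dy$ into $\int_{1/2}^{2} s^{-1}\big(1 + \log\frac{s}{(1-s)^2}\big)\,ds$, a finite constant independent of $x$. For the second condition I would use $x/2 < y < 2x \iff y/2 < x < 2y$ together with the substitution $x = ys$, $s \in (1/2,2)$: since $w(x)/w(y) = (x/y)^{\delta} = s^{\delta}$ is then bounded above and below, one gets $\int_0^{\infty} K(x,y)\,w(x)/w(y)\,dx = \int_{1/2}^{2} \big(1 + \log\frac{s}{(1-s)^2}\big) s^{\delta}\,ds$, again a finite constant, this time independent of $y$. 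Schur's lemma then gives boundedness on $L^p(\mathbb{R}_+, x^{\delta}dx)$, uniformly in $1 \le p \le \infty$, which is the claim.

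The only point demanding a little attention is the logarithmic singularity of $K$ across the diagonal $y = x$, coming from the factor $\log\frac{xy}{(x-y)^2}$; it is harmless because $\int_0 \log\frac{1}{u}\,du < \infty$, which is exactly why the one-variable integrals $\int_{1/2}^{2}\big(1 + \log\frac{s}{(1-s)^2}\big)s^{\delta}\,ds$ converge. Apart from that the argument is entirely routine; this is, essentially, the computation alluded to in the proof of Proposition \ref{prop:Hb}.
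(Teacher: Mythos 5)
Your proof is correct. The paper itself does not prove Lemma \ref{lem:NN} (it only points to the literature, and the proof of Proposition \ref{prop:Hb} refers to \cite[p.\,125--126]{BHNV}), but your Schur-test argument with the constant test function --- reducing to the two one-variable integrals $\int_{1/2}^{2} s^{-1}\bigl(1+\log\frac{s}{(1-s)^2}\bigr)\,ds$ and $\int_{1/2}^{2}\bigl(1+\log\frac{s}{(1-s)^2}\bigr)s^{\delta}\,ds$ via the dilation substitutions --- is exactly the standard computation those references carry out, and it correctly covers the full range $1\le p\le\infty$ and all $\delta\in\mathbb{R}$.
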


\begin{rem} \label{rem:aux}
Let $b>1$ be fixed. Changing slightly the definitions of $\mathcal{H}_{\nu,\loc}$, $N$ and $N^{\log}$ so that the
integrations are over $(x/b,bx)$, and the definition of $T_{\psi}^{\xi}$ so that the integration is from $x/b$ or $bx$ to
$\infty$, does not affect the statements of Lemmas \ref{lem:MH}, \ref{lem:Tpsi} and \ref{lem:NN}.
\end{rem}

%%%%%%%%%%%%%%%%%%%%%%%%%%%%%%%%%%%%%%%%%%%%%%%%%%%%%%%%%%%%%%
\subsection{Boundedness transference principle} \label{ssec:trans}

Let $E \subset \mathbb{R}$ (think $E = (-1,\infty)$).
Assume that $\{K^{\nu} : \nu \in E\}$ is a family of operators acting on functions on $\mathbb{R}_+$.
Consider an associated family $\{\widetilde{K}^{\nu} : \nu \in -E\}$ given by
$$
\widetilde{K}^{\nu}f(x) = x^{-2\nu} K^{-\nu}\big(y^{2\nu}f\big)(x).
$$
Observe that for each $\nu \in E$ the following equivalence holds:
$K^{\nu}$ is an integral operator related to the measure space $(\mathbb{R}_+,d\eta_{\nu})$, i.e.\ it represents as
$$
K^{\nu}f(x) = \int_0^{\infty} K^{\nu}(x,y)f(y)\, d\eta_{\nu}(y),
$$
if and only if $\widetilde{K}^{-\nu}$ is an integral operator related to the measure space
$(\mathbb{R}_+,d\eta_{-\nu})$ with the corresponding integral kernel
$$
\widetilde{K}^{-\nu}(x,y) = (xy)^{2\nu} K^{\nu}(x,y).
$$
This remains true for principal value integral representations.

The result below describes the relation between two-weight $L^p-L^q$ boundedness of $K^{\nu}$ and $\widetilde{K}^{-\nu}$.
The proof is straightforward and thus omitted.
\begin{pro} \label{thm:trans}
Let $1 \le p,q \le \infty$ and $U,V$ be (power) weights on $\mathbb{R}_+$.
Fix an arbitrary $\nu \in -E$.
\begin{itemize}
\item[(i)]
$\widetilde{K}^{\nu}$ is well defined on $L^p(\mathbb{R}_+,U^p d\eta_{\nu})$ if and only if\\
$K^{-\nu}$ is well defined on $L^p(\mathbb{R}_+,(x^{2\nu (2/p-1)}U)^p d\eta_{-\nu})$.
\item[(ii)]
$\widetilde{K}^{\nu}$ is bounded from $L^p(\mathbb{R}_+,U^p d\eta_{\nu})$ to $L^q(\mathbb{R}_+,V^q d\eta_{\nu})$ if and only if \\
$K^{-\nu}$ is bounded from $L^p(\mathbb{R}_+,(x^{2\nu (2/p-1)}U)^p d\eta_{-\nu})$
to $L^q(\mathbb{R}_+,(x^{2\nu (2/q-1)}V)^q d\eta_{-\nu})$.
\end{itemize}
\end{pro}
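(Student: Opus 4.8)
The plan is to recognize $\widetilde{K}^{\nu}$ as $K^{-\nu}$ sandwiched between two pointwise multiplication operators, and then to observe that these multiplications are isometric isomorphisms between precisely the weighted Lebesgue spaces occurring in the statement. Indeed, writing $m_{s}$ for the operator of multiplication by $x^{s}$, the very definition of $\widetilde{K}^{\nu}$ reads $\widetilde{K}^{\nu}=m_{-2\nu}\circ K^{-\nu}\circ m_{2\nu}$, so the whole matter is to identify the source space of $m_{2\nu}$ with the source space of $K^{-\nu}$ appearing in the statement, and the target space of $K^{-\nu}$ with the source space of $m_{-2\nu}$, all isometrically.

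The one computation I would carry out is the following elementary change of weight. Fix $\nu\in-E$, let $1\le r<\infty$ and let $W$ be a power weight on $\mathbb{R}_{+}$. Using $d\eta_{\nu}(x)=x^{2\nu+1}\,dx=x^{4\nu}\,d\eta_{-\nu}(x)$ together with the arithmetic identity $-2\nu r+4\nu=2\nu r(2/r-1)$, one gets for every measurable $f$
\begin{align*}
\int_{0}^{\infty}|f(x)|^{r}W(x)^{r}\,d\eta_{\nu}(x)
&=\int_{0}^{\infty}\big|x^{2\nu}f(x)\big|^{r}\,x^{-2\nu r}W(x)^{r}\,x^{4\nu}\,d\eta_{-\nu}(x)\\
&=\int_{0}^{\infty}\big|x^{2\nu}f(x)\big|^{r}\big(x^{2\nu(2/r-1)}W(x)\big)^{r}\,d\eta_{-\nu}(x);
\end{align*}
for $r=\infty$ one has $2/r-1=-1$ and the same cancellation gives $\|x^{2\nu}f\cdot x^{-2\nu}W\|_{\infty}=\|fW\|_{\infty}$, so the computation is valid uniformly in $r\in[1,\infty]$ in view of the convention for power-weighted $L^{\infty}$ spaces recalled in the Notation subsection. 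In other words, $m_{2\nu}$ is a surjective isometry from $L^{r}(\mathbb{R}_{+},W^{r}d\eta_{\nu})$ onto $L^{r}(\mathbb{R}_{+},(x^{2\nu(2/r-1)}W)^{r}d\eta_{-\nu})$, with inverse $m_{-2\nu}$. I would apply this once with $(r,W)=(p,U)$ and once with $(r,W)=(q,V)$.

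With these two isometries in hand, (i) is immediate, since $\widetilde{K}^{\nu}f=m_{-2\nu}K^{-\nu}(m_{2\nu}f)$ is well defined for every $f\in L^{p}(\mathbb{R}_{+},U^{p}d\eta_{\nu})$ exactly when $K^{-\nu}g$ is well defined for every $g$ in the image of that space under the bijection $m_{2\nu}$, i.e.\ for every $g\in L^{p}(\mathbb{R}_{+},(x^{2\nu(2/p-1)}U)^{p}d\eta_{-\nu})$. For (ii), the isometry with $(r,W)=(p,U)$ maps $L^{p}(\mathbb{R}_{+},U^{p}d\eta_{\nu})$ onto $L^{p}(\mathbb{R}_{+},(x^{2\nu(2/p-1)}U)^{p}d\eta_{-\nu})$, while the inverse of the isometry with $(r,W)=(q,V)$ maps $L^{q}(\mathbb{R}_{+},(x^{2\nu(2/q-1)}V)^{q}d\eta_{-\nu})$ onto $L^{q}(\mathbb{R}_{+},V^{q}d\eta_{\nu})$; composing as $\widetilde{K}^{\nu}=m_{-2\nu}\circ K^{-\nu}\circ m_{2\nu}$ then shows that $\widetilde{K}^{\nu}$ is bounded between the first pair of spaces if and only if $K^{-\nu}$ is bounded between the second pair, and with identical operator norm. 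There is no genuine obstacle here; the argument is a bookkeeping exercise, the only points worth a moment's care being the exponent arithmetic that produces the factors $x^{2\nu(2/p-1)}$ and $x^{2\nu(2/q-1)}$ and the $L^{\infty}$ endpoint convention. Finally, since $m_{2\nu}$ and $m_{-2\nu}$ act pointwise, exactly the same reasoning applies when $K^{-\nu}$, hence $\widetilde{K}^{\nu}$, is represented by a principal value integral, so the kernel identities recorded just before the proposition transfer along with the boundedness.
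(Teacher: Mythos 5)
Your argument is correct, and since the paper explicitly omits the proof as ``straightforward,'' your conjugation-by-isometries argument---writing $\widetilde{K}^{\nu}=m_{-2\nu}\circ K^{-\nu}\circ m_{2\nu}$ and checking that $m_{2\nu}$ is an isometric isomorphism of the relevant power-weighted spaces via $d\eta_{\nu}=x^{4\nu}\,d\eta_{-\nu}$ and the identity $-2\nu r+4\nu=2\nu r(2/r-1)$---is exactly the intended one. The exponent arithmetic, the $L^{\infty}$ endpoint convention, and the remark about principal value representations all check out.
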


No analogous relations are true in general for weak type or restricted weak type boundedness.

%%%%%%%%%%%%%%%%%%%%%%%%%%%%%%%%%%%%%%%%%%%%%%%%%%%%%%%%%%%%%%
%%%%%%%%%%%%%%%%%%%%%%%%%%%%%%%%%%%%%%%%%%%%%%%%%%%%%%%%%%%%%%

\section{Bessel semigroup maximal operator} \label{sec:max}

In this section we study the Bessel semigroup maximal operator in the exotic case.
It is well known that the integral kernel (with respect to $d\eta_{\nu}$) of $\{\exp(-t B_{\nu}^{\textrm{cls}})\}$ is
$$
W_t^{\nu}(x,y) = \frac{1}{2t} (xy)^{-\nu} e^{-(x^2+y^2)/4t} I_{\nu}\Big(\frac{xy}{2t}\Big), \qquad t,x,y > 0,
$$
and here $\nu > -1$ is in the classical range.
The integral kernel of $\{\exp(-t B_{\nu}^{\textrm{exo}})\}$, still with respect to $d\eta_{\nu}$, is
(see \cite[Section 4]{NSS})
\begin{equation} \label{exoWcls}
\widetilde{W}_t^{\nu}(x,y) = (xy)^{-2\nu} W_t^{-\nu}(x,y), \qquad t,x,y > 0.
\end{equation}
Here $\nu < 1$ is in the exotic range. Recall that for $\nu=0$ the classical and exotic settings coincide.

Denote
$$
W_t^{\nu}f(x) = \int_0^{\infty} W_t^{\nu}(x,y) f(y)\, d\eta_{\nu}(y), \qquad
\widetilde{W}_t^{\nu}f(x) = \int_0^{\infty} \widetilde{W}_t^{\nu}(x,y) f(y)\, d\eta_{\nu}(y), \qquad t,x > 0,
$$
and observe that
\begin{equation} \label{Wexotocls}
\widetilde{W}_t^{\nu}f(x) = x^{-2\nu} W_t^{-\nu}\big(y^{2\nu}f\big)(x), \qquad t,x > 0, \quad 0 \neq \nu < 1.
\end{equation}
We consider the corresponding maximal operators
$$
W_{*}^{\nu}f(x) = \sup_{t > 0} \big| W_t^{\nu}f(x)\big|, \qquad
\widetilde{W}_{*}^{\nu}f(x) = \sup_{t > 0} \big| \widetilde{W}_t^{\nu}f(x)\big|, \quad x > 0,
$$
on their natural domains, the first one for $\nu > -1$, the second one for $\nu < 1$.

In \cite[Theorem 2.1]{BHNV} the following mapping properties of $W_{*}^{\nu}$ were characterized:
strong type, weak type and restricted weak type $(p,p)$, $1\le p < \infty$, with respect to the measure space
$(\mathbb{R}_+,x^{\delta}dx)$.
\begin{thm}[{\cite{BHNV}}] \label{thm:maxWcls}
Let $\nu > -1$, $1 \le p < \infty$, $\delta \in \mathbb{R}$. Then the maximal operator $W_{*}^{\nu}$,
considered on the measure space $(\mathbb{R}_+,x^{\delta}dx)$, has the following mapping properties:
\begin{itemize}
\item[(a)]
$W_{*}^{\nu}$ is of strong type $(p,p)$ if and only if $p > 1$ and $-1 < \delta < (2\nu+2)p -1$;
\item[(b)]
$W_{*}^{\nu}$ is of weak type $(p,p)$ if and only if $-1 < \delta < (2\nu+2)p-1$, with the second inequality weakened
	in case $p=1$;
\item[(c)]
$W_{*}^{\nu}$ is of restricted weak type $(p,p)$ if and only if $-1 < \delta \le (2\nu+2)p-1$.
\end{itemize}
Moreover, $W_{*}^{\nu}$ is of strong type $(\infty,\infty)$.
\end{thm}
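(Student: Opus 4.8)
The plan is to dominate $W_*^\nu$ pointwise by a local and a global piece and to reduce each to one of the model operators of Section~\ref{sec:tech}, whose sharp mapping properties are already available. The starting point is a kernel analysis. Splitting according to whether the argument $xy/(2t)$ of the Bessel function is small or large and applying \eqref{bes:est}, one obtains
\[
W_t^\nu(x,y)\simeq t^{-\nu-1}e^{-(x^2+y^2)/(4t)}\quad(t\gtrsim xy),\qquad W_t^\nu(x,y)\simeq t^{-1/2}(xy)^{-\nu-1/2}e^{-(x-y)^2/(4t)}\quad(t\lesssim xy).
\]
A direct optimization in $t$ --- here the hypothesis $\nu>-1$ is used, since it forces $t^{-\nu-1}$ to decay as $t\to\infty$ --- gives the global kernel bound $\sup_{t>0}W_t^\nu(x,y)\simeq x^{-2\nu-2}$ for $0<y\le x/2$ and $\sup_{t>0}W_t^\nu(x,y)\simeq y^{-2\nu-2}$ for $y\ge 2x$, whereas on the local strip $x/2<y<2x$ one has, uniformly in $t$, $W_t^\nu(x,y)\lesssim x^{-2\nu-1}\bigl(t^{-1/2}e^{-(x-y)^2/(4t)}+x^{-1}\bigr)$.

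For the sufficiency halves, write $W_*^\nu f\le W_*^{\nu,\glob}|f|+W_*^{\nu,\loc}|f|$, where the inner integration runs over $(0,\infty)\setminus(x/2,2x)$ and over $(x/2,2x)$, respectively. In the global piece one may pass $\sup_t$ inside the integral, and the global kernel bound yields
\[
W_*^{\nu,\glob}|f|(x)\lesssim x^{-2\nu-2}\int_0^{x/2}|f(y)|\,d\eta_\nu(y)+\int_{2x}^{\infty}|f(y)|\,\frac{dy}{y}=H_{0,2}^{2\nu+1}|f|(x)+H_{\infty,2}^{0}|f|(x).
\]
In the local piece $y^{2\nu+1}\simeq x^{2\nu+1}$ over the range of integration, so keeping $\sup_t$ \emph{outside} the integral leaves a truncated Gauss--Weierstrass maximal operator $\sup_{t>0}\int_{x/2}^{2x}\mathcal{W}_t(x,y)|f(y)|\,dy$ plus a local average $x^{-1}\int_{x/2}^{2x}|f(y)|\,dy$, both pointwise dominated by $M_{\loc}^{5}|f|(x)$ (a standard fact for the former, an elementary one for the latter). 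Since $H_{0,2}^{2\nu+1}$ and $H_{\infty,2}^{0}$ have the same mapping properties as $H_0^{2\nu+1}$ and $H_\infty^{0}$ (see the discussion preceding Proposition~\ref{prop:Hb}), Lemma~\ref{lem:H0} (with $\xi=2\nu+1$, noting $\xi\neq-1$ because $\nu>-1$) and Lemma~\ref{lem:Hinf} (with $\xi=0$) show that the $H_0^{2\nu+1}$ summand contributes exactly the upper condition $\delta<(2\nu+2)p-1$ (weakened at $p=1$, and in the form $\delta\le(2\nu+2)p-1$ for restricted weak type), that the $H_\infty^{0}$ summand contributes exactly $\delta>-1$ in all three regimes (with $\xi=0$ there is no relief at $p=1$), while Lemma~\ref{lem:MH} shows $M_{\loc}^{5}$ is bounded on $L^p(x^\delta dx)$ for all $\delta$ when $1<p<\infty$ and is of weak type $(1,1)$ for all $\delta$. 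Adding the three summands and using stability of weak and restricted weak type under finite sums gives the sufficiency halves of (a)--(c), and the strong type $(\infty,\infty)$ of $W_*^\nu$ follows separately from $\int_0^\infty W_t^\nu(x,y)\,d\eta_\nu(y)=1$, which shows $W_*^\nu$ does not expand the $L^\infty$ norm.

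Sharpness comes from a pointwise lower bound and two test inputs. Taking $t=x^2$ and restricting the integral to $(0,x/2)$ gives, for $f\ge0$, the lower bound $W_*^\nu f(x)\gtrsim H_{0,2}^{2\nu+1}f(x)$; hence the failure of $H_{0,2}^{2\nu+1}$ (equivalently $H_0^{2\nu+1}$) off the ranges in Lemma~\ref{lem:H0} transfers to $W_*^\nu$, proving necessity of $\delta<(2\nu+2)p-1$ for strong and weak type (weakened to $\delta\le 2\nu+1$ at $p=1$) and of $\delta\le(2\nu+2)p-1$ for restricted weak type. Next, for $R>0$ put $f_R=\chi_{(R,2R)}$; taking $t=R^2$ one checks $W_*^\nu f_R(x)\gtrsim1$ for all $x\in(0,R/2)$, so $W_*^\nu f_R\notin$ weak $L^p(x^\delta dx)$ as soon as $\delta\le-1$, which rules out all three boundedness types there (note $f_R$ is a characteristic function, so restricted weak type is disqualified as well). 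Finally, to exclude $p=1$ from strong type $(p,p)$, test with $f_n=n\chi_{(1,1+1/n)}$: the local kernel bound gives $W_*^\nu f_n(x)\gtrsim|x-1|^{-1}$ for $1/n\lesssim|x-1|\lesssim1$ and $n$ large, whereas $\|f_n\|_{L^1(x^\delta dx)}\simeq1$, so $\|W_*^\nu f_n\|_{L^1(x^\delta dx)}\to\infty$. Combining these with the sufficiency direction closes all three equivalences and the $(\infty,\infty)$ statement.

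I expect the kernel analysis of the first paragraph to be the only genuinely delicate step: the estimates must be sharp enough to pair $W_*^\nu$ with the model operators $H_0^{2\nu+1}$, $H_\infty^{0}$ and $M_{\loc}^{k}$ \emph{without loss}, so that one gets sharpness rather than mere sufficiency. Concretely this means not moving $\sup_t$ under the integral on the local strip, squeezing a matching \emph{lower} bound $W_*^\nu f\gtrsim H_{0,2}^{2\nu+1}f$ out of a single well-chosen $t$, and invoking $\nu>-1$ exactly where the global kernel bound would otherwise be infinite. Everything after that is bookkeeping against the lemmas of Section~\ref{sec:tech}.
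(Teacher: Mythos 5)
Your proof is correct and follows essentially the same route as the paper's treatment of this circle of results: the theorem itself is quoted from \cite{BHNV}, and the paper's proof of its exotic analogue (Theorem \ref{thm:maxWexo}) uses exactly your scheme --- kernel asymptotics from \eqref{bes:est}, pointwise domination by $H_0^{2\nu+1}$ plus a local maximal operator plus a far-field operator, then the $t=x^2$ lower bound yielding $H_{0,2}^{2\nu+1}$, the $\chi_{(1,2)}$ test function for $\delta\le-1$ (precisely the corrected counterexample from the paper's footnote), and the concentrating $\chi_{(1,1+1/n)}$ example to kill strong type $(1,1)$. The only deviation is cosmetic: you use $H_{\infty,2}^{0}$ for the region $y\ge 2x$ instead of the maximal operator $T_{\psi}^{0}$ and recover strong type $(\infty,\infty)$ separately from conservativity of the semigroup, which is legitimate since $H_{\infty}^{0}$ and $T_{\psi}^{0}$ have identical mapping properties for $p<\infty$ by Lemmas \ref{lem:Hinf} and \ref{lem:Tpsi}.
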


We will prove an analogous characterization in the exotic case, see Figure \ref{fig_1} below.
\begin{thm} \label{thm:maxWexo}
Let $0 \neq \nu < 1$, $1 \le p < \infty$, $\delta \in \mathbb{R}$. Then the maximal operator $\widetilde{W}_{*}^{\nu}$,
considered on the measure space $(\mathbb{R}_+,x^{\delta}dx)$, has the following mapping properties:
\begin{itemize}
\item[(a)]
$\widetilde{W}_{*}^{\nu}$ is of strong type $(p,p)$ if and only if $p > 1$ and $2\nu p - 1 < \delta < 2p -1$;
\item[(b)]
$\widetilde{W}_{*}^{\nu}$ is of weak type $(p,p)$ if and only if $2\nu p -1 \le \delta < 2p-1$,
	with the second inequality weakened in case $p=1$;
\item[(c)]
$\widetilde{W}_{*}^{\nu}$ is of restricted weak type $(p,p)$ if and only if $2\nu p-1 \le \delta \le 2p-1$.
\end{itemize}
Moreover, $\widetilde{W}_{*}^{\nu}$ is of strong type $(\infty,\infty)$ if and only if $\nu < 0$.
\end{thm}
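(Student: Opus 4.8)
The plan is to leverage the transference principle (Proposition \ref{thm:trans}) together with the already established classical result, Theorem \ref{thm:maxWcls}, since formula \eqref{Wexotocls} expresses $\widetilde{W}_t^{\nu}$ in exactly the form $\widetilde{K}^{\nu}f(x) = x^{-2\nu}K^{-\nu}(y^{2\nu}f)(x)$ with $K^{\nu} = W_t^{\nu}$. The subtlety is that the maximal operator involves a supremum over $t$, but the algebraic identity $\widetilde{W}_t^{\nu}f(x) = x^{-2\nu}W_t^{-\nu}(y^{2\nu}f)(x)$ holds for each fixed $t>0$ with the multiplier $x^{-2\nu}$ independent of $t$, so taking the supremum commutes with multiplication by the fixed positive factor: $\widetilde{W}_*^{\nu}f(x) = x^{-2\nu}W_*^{-\nu}(y^{2\nu}f)(x)$. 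Hence the strong type part of Proposition \ref{thm:trans} applies verbatim with $K^{\nu}$ replaced by $W_*^{\nu}$ (the proof of that proposition being purely a change of variables, it does not care whether the operators are linear). Concretely, $\widetilde{W}_*^{\nu}$ is of strong type $(p,p)$ with respect to $(\mathbb{R}_+, x^{\delta}dx)$ if and only if $W_*^{-\nu}$ is of strong type $(p,p)$ with respect to $(\mathbb{R}_+, x^{\delta'}dx)$ for the shifted exponent $\delta'$ coming from the weight translation $x^{2\nu(2/p-1)}U$, namely (after writing everything relative to $dx$ rather than $d\eta_{\nu}$) $\delta' = \delta - 2\nu p + 2\nu(2\nu+1)$ — I would recompute this bookkeeping carefully, converting between $d\eta_{\nu}$-weights and $dx$-weights.

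First I would record the precise correspondence of exponents. Writing $L^p(x^{\delta}dx)$ in the form $L^p(x^{Ap}d\eta_{\nu})$ requires $Ap + (2\nu+1) = \delta$, i.e.\ $A = (\delta - 2\nu - 1)/p$. Applying Proposition \ref{thm:trans}(ii) (with $U = V = x^A$, $q=p$) to $K = W_*^{-\nu}$ on $d\eta_{-\nu}$, the transferred condition is boundedness of $W_*^{-\nu}$ on $L^p((x^{2\nu(2/p-1)}x^A)^p d\eta_{-\nu})$; converting back to a $dx$-weight with parameter $-\nu$ gives a $dx$-exponent $\delta_{\mathrm{new}} = p\big(2\nu(2/p-1)+A\big) + (-2\nu+1) = \delta - 2\nu p + 4\nu - 2 + (-2\nu) + (2\nu) \cdot\!\ldots$; I would perform this arithmetic cleanly and expect it to collapse to a clean affine relation in $\delta$. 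Then I substitute into Theorem \ref{thm:maxWcls}: the classical strong type region $-1 < \delta_{\mathrm{new}} < (2(-\nu)+2)p - 1 = (2-2\nu)p - 1$ should, after the substitution, become precisely $2\nu p - 1 < \delta < 2p - 1$ as claimed in part (a), and the classical requirement $p>1$ carries over. The same substitution, applied to parts (b) and (c) of Theorem \ref{thm:maxWcls}, would at first glance only yield the analogues where BOTH endpoints behave as in the classical case.

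The main obstacle is precisely that Proposition \ref{thm:trans} is explicitly stated only for strong type $L^p$–$L^q$ boundedness, and the remark immediately after it warns that no analogous transference holds for weak type or restricted weak type. So parts (b) and (c) cannot simply be transferred; in particular the \emph{lower} endpoint in the exotic case is $\le$ (closed) even for weak type and restricted weak type — reflecting the fact that the conjugating factor $x^{-2\nu}$ genuinely changes the behavior near $0$ — whereas in the classical Theorem \ref{thm:maxWcls} the lower endpoint $-1 < \delta$ is always open. These endpoint statements must be proved by hand. For necessity of $2\nu p - 1 \le \delta$ and $\delta \le 2p - 1$ (and the failure one step beyond), I would test $\widetilde{W}_*^{\nu}$ against characteristic functions of intervals bounded away from $0$ and of intervals shrinking toward $0$ (respectively toward $\infty$), using the Bessel asymptotics \eqref{bes:est} for $I_{-\nu}$ to extract the pointwise size of $\widetilde{W}_t^{\nu}\chi_E(x)$ after optimizing in $t$; the exotic kernel $\widetilde{W}_t^{\nu}(x,y) = (xy)^{-2\nu}W_t^{-\nu}(x,y)$ behaves like $(xy)^{-2\nu}\cdot t^{-1}(xy)^{\nu}e^{-(x^2+y^2)/4t}(xy/2t)^{-\nu}$ for small argument, i.e.\ like $t^{-1}e^{-(x^2+y^2)/4t}$ times lower-order in $xy/t$, which is exactly the kind of computation appearing in \cite{BHNV}. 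For the positive (sufficiency) direction at the closed lower endpoint $\delta = 2\nu p - 1$, I would split $\widetilde{W}_*^{\nu}$ into local and global parts as in \cite{BHNV}: the local part is controlled by $M^k_{\mathrm{loc}}$ (Lemma \ref{lem:MH}), which is bounded on every $L^p(x^{\delta}dx)$, $1<p<\infty$, and of weak type $(1,1)$; the global part is controlled by the Hardy operators $H_0^{\xi}$ and $H_{\infty}^{\xi}$ with the appropriate $\xi$ (read off from the kernel estimates), whose restricted weak type and weak type endpoint behavior — including the strictening/weakening subtleties at $\xi = -1$ and $\xi = 0$ — is given exactly by Lemmas \ref{lem:H0} and \ref{lem:Hinf}, and these produce the closed left endpoint and the open/weakened right endpoint. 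Finally, for the strong type $(\infty,\infty)$ claim: boundedness on $L^{\infty}(dx)$ amounts to $\sup_{t,x}\int_0^{\infty}\widetilde{W}_t^{\nu}(x,y)\,d\eta_{\nu}(y) < \infty$; since $\int_0^{\infty}\widetilde{W}_t^{\nu}(x,y)\,d\eta_{\nu}(y) = x^{-2\nu}\int_0^{\infty}W_t^{-\nu}(x,y)y^{2\nu}\,d\eta_{-\nu}(y)$ and $\{W_t^{-\nu}\}$ is conservative (its kernel integrates to $1$ against $d\eta_{-\nu}$) only in a suitable sense — here I would instead directly estimate, using \eqref{bes:est}, that $\int_0^{\infty}\widetilde{W}_t^{\nu}(x,y)\,d\eta_{\nu}(y) \simeq$ a constant exactly when the large-$y$ contribution converges, which forces $\nu < 0$, and diverges (logarithmically at $\nu = 0$ is excluded by hypothesis, polynomially for $0<\nu<1$) otherwise; hence strong type $(\infty,\infty)$ holds precisely for $\nu < 0$.
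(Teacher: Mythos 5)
Your overall architecture is close to the paper's: the strong type part (a) can indeed be transferred from Theorem \ref{thm:maxWcls}(a) via Proposition \ref{thm:trans} (the paper notes exactly this in a remark after its proof), and the endpoint statements must be handled by hand through kernel estimates, special operators, and counterexamples. However, there are two genuine gaps in your plan.

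First, your sufficiency argument at the closed left endpoint does not deliver the \emph{weak} type asserted in (b). Controlling the global region $\{y\ge 2x\}$ by a dual Hardy operator means taking $\sup_t$ inside the integral, which yields $H_{\infty}^{-2\nu}$; by Lemma \ref{lem:Hinf}(b) this operator is of weak type $(p,p)$ at $\delta=2\nu p-1$ only when $p=1$, and at that closed endpoint for $p>1$ it is merely of \emph{restricted} weak type (Lemma \ref{lem:Hinf}(c)). Yet part (b) claims full weak type $(p,p)$ at $\delta=2\nu p-1$ for every $1\le p<\infty$. The paper obtains this from the strictly smaller maximal operator $T_{\psi}^{-2\nu}$ with $\psi(t,y)=(y^2/t)^{-\nu+1}e^{-cy^2/t}$, where the supremum over $t$ sits \emph{outside} the integral and the Gaussian decay is exploited; Lemma \ref{lem:Tpsi}(c) then gives weak type at the closed endpoint for all $p$ since $\xi=-2\nu\neq 0$. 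Without this refinement your toolkit proves (a), (c) and the $p=1$ case of (b), but misses the left closed endpoint of (b) for $p>1$ --- which is precisely the feature distinguishing the exotic from the classical picture.

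Second, on the necessity side, testing against characteristic functions cannot show that weak type \emph{fails} at $\delta=2p-1$ for $p>1$: restricted weak type holds there by (c), and restricted weak type is by definition the weak type inequality for characteristic functions, so no such test (nor any family of them) can produce a contradiction. The paper instead argues by duality: since $\chi_{(0,1)}(x)x^{1-\delta}\notin L^{p'}(x^{\delta}dx)$ when $\delta=2p-1$, there exist $f_n\ge 0$ with $\|f_n\|_{L^p(x^{\delta}dx)}\le 1$ and $\int_0^1 f_n(y)\,y\,dy\ge n$, and the lower bound $\widetilde{W}_{*}^{\nu}f_n(x)\gtrsim n x^{-2}$ for $x>1$ then contradicts the weak type inequality. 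Likewise, disproving strong type $(1,1)$ inside the weak $(1,1)$ region requires a quantitative blow-up over the family $\chi_{(1,1+\epsilon)}$ as $\epsilon\to 0^+$. These two steps must be added; the remaining necessity claims (the bounds on $\delta$ for restricted weak type, and the failure of strong type $(\infty,\infty)$ for $\nu>0$) do follow from characteristic function tests essentially as you describe.
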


%%%%% PICTURE I

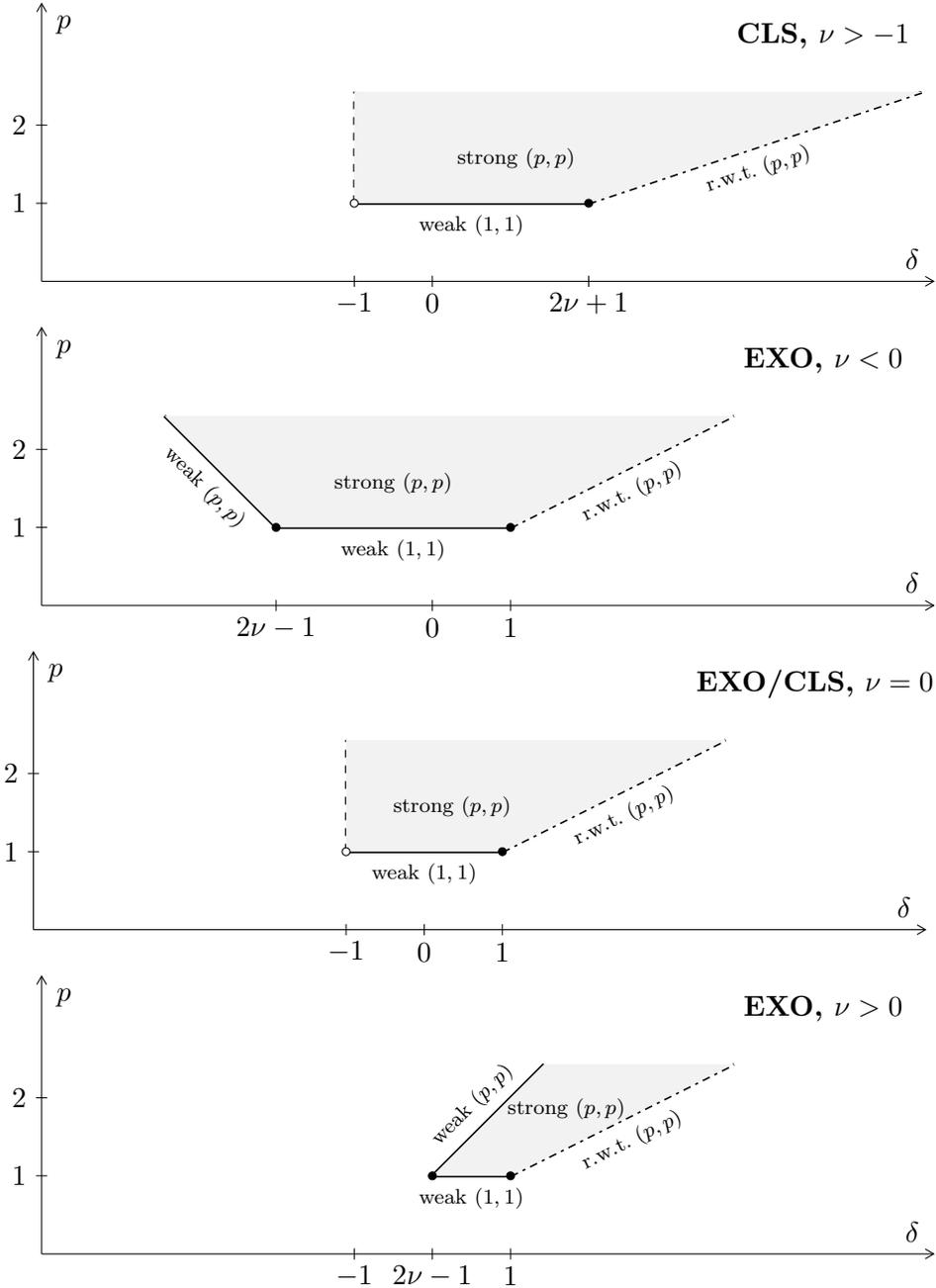
\begin{figure}
\centering
\begin{tikzpicture}[scale=1.5]

\draw[arrows=-angle 60] (0,0) -- (0,2.5);
\draw[arrows=-angle 60] (0,0) -- (8,0);

\node at (0.2,2.3) {$p$};
\node at (7.8,0.2) {$\delta$};

\draw[very thin] (2.8,-0.05) -- (2.8,0.05);
\node at (2.8,-0.2) {$-1$};

\draw[very thin] (3.5,-0.05) -- (3.5,0.05);
\node at (3.5,-0.2) {$0$};

\draw[very thin] (4.9,-0.05) -- (4.9,0.05);
\node at (4.9,-0.2) {$2\nu+1$};

\draw[very thin] (-0.05,0.7) -- (0.05,0.7);
\node at (-0.2,0.7) {$1$};

\draw[very thin] (-0.05,1.4) -- (0.05,1.4);
\node at (-0.2,1.4) {$2$};

\draw[very thick, dash dot] (4.9,0.7) -- (7.9,1.7);
\draw[very thick] (2.8,0.7) -- (4.9,0.7);
\draw[thick, dashed] (2.8,0.7) -- (2.8,1.7);

\fill[black!5!white] (2.8,1.7) -- (2.8,0.7) -- (4.9,0.7) -- (7.9,1.7);

\node at (3.85,0.5) {\scriptsize weak $(1,1)$};
\node at (4.25,1.1) {\scriptsize strong $(p,p)$};
\node[rotate=18.43] at (6.42, 1.002) {\scriptsize r.w.t.\ $(p,p)$};

\node at (7,2.2) {\textbf{CLS, $\nu>-1$}};

\filldraw[fill=white] (2.8,0.7) circle(1pt);
\filldraw[fill=black] (4.9,0.7) circle(1pt);

\end{tikzpicture} 

%%%%% PICTURE II

\begin{tikzpicture}[scale=1.5]

\draw[arrows=-angle 60] (0,0) -- (0,2.5);
\draw[arrows=-angle 60] (0,0) -- (8,0);

\node at (0.2,2.3) {$p$};
\node at (7.8,0.2) {$\delta$};

\draw[very thin] (2.1,-0.05) -- (2.1,0.05);
\node at (2.1,-0.2) {$2\nu-1$};

\draw[very thin] (3.5,-0.05) -- (3.5,0.05);
\node at (3.5,-0.2) {$0$};

\draw[very thin] (4.2,-0.05) -- (4.2,0.05);
\node at (4.2,-0.2) {$1$};
\draw[very thin] (-0.05,0.7) -- (0.05,0.7);
\node at (-0.2,0.7) {$1$};

\draw[very thin] (-0.05,1.4) -- (0.05,1.4);
\node at (-0.2,1.4) {$2$};

\draw[very thick, dash dot] (4.2,0.7) -- (6.2,1.7);
\draw[very thick] (2.1,0.7) -- (4.2,0.7);
\draw[very thick] (2.1,0.7) -- (1.1,1.7);

\fill[black!5!white] (1.1,1.7) -- (2.1,0.7) -- (4.2,0.7) -- (6.2,1.7);

\node at (3.15,0.5) {\scriptsize weak $(1,1)$};
\node at (3.15,1.1) {\scriptsize strong $(p,p)$};
\node[rotate=-45] at (1.46, 1.06) {\scriptsize weak $(p,p)$};
\node[rotate=26.56] at (5.29, 1.02) {\scriptsize r.w.t.\ $(p,p)$};

\node at (7,2.2) {\textbf{EXO, $\nu<0$}};

\filldraw[fill=black] (2.1,0.7) circle(1pt);
\filldraw[fill=black] (4.2,0.7) circle(1pt);

\end{tikzpicture} 

%%%%% PICTURE III

\begin{tikzpicture}[scale=1.5]

\draw[arrows=-angle 60] (0,0) -- (0,2.5);
\draw[arrows=-angle 60] (0,0) -- (8,0);

\node at (0.2,2.3) {$p$};
\node at (7.8,0.2) {$\delta$};

\draw[very thin] (2.8,-0.05) -- (2.8,0.05);
\node at (2.8,-0.2) {$-1$};

\draw[very thin] (3.5,-0.05) -- (3.5,0.05);
\node at (3.5,-0.2) {$0$};

\draw[very thin] (4.2,-0.05) -- (4.2,0.05);
\node at (4.2,-0.2) {$1$};

\draw[very thin] (-0.05,0.7) -- (0.05,0.7);
\node at (-0.2,0.7) {$1$};

\draw[very thin] (-0.05,1.4) -- (0.05,1.4);
\node at (-0.2,1.4) {$2$};

\draw[very thick, dash dot] (4.2,0.7) -- (6.2,1.7);
\draw[very thick] (2.8,0.7) -- (4.2,0.7);
\draw[thick,dashed] (2.8,0.7) -- (2.8,1.7);

\fill[black!5!white] (2.8,1.7) -- (2.8,0.7) -- (4.2,0.7) -- (6.2,1.7);

\node at (3.5,0.5) {\scriptsize weak $(1,1)$};
\node at (3.75,1.1) {\scriptsize strong $(p,p)$};
\node[rotate=26.56] at (5.29, 1.02) {\scriptsize r.w.t.\ $(p,p)$};

\node at (7,2.2) {\textbf{EXO/CLS, $\nu=0$}};

\filldraw[fill=white] (2.8,0.7) circle(1pt);
\filldraw[fill=black] (4.2,0.7) circle(1pt);

\end{tikzpicture} 

%%%%% PICTURE IV

\begin{tikzpicture}[scale=1.5]

\draw[arrows=-angle 60] (0,0) -- (0,2.5);
\draw[arrows=-angle 60] (0,0) -- (8,0);

\node at (0.2,2.3) {$p$};
\node at (7.8,0.2) {$\delta$};

\draw[very thin] (2.8,-0.05) -- (2.8,0.05);
\node at (2.8,-0.2) {$-1$};

\draw[very thin] (3.5,-0.05) -- (3.5,0.05);
\node at (3.5,-0.2) {$2\nu-1$};

\draw[very thin] (4.2,-0.05) -- (4.2,0.05);
\node at (4.2,-0.2) {$1$};
\draw[very thin] (-0.05,0.7) -- (0.05,0.7);
\node at (-0.2,0.7) {$1$};

\draw[very thin] (-0.05,1.4) -- (0.05,1.4);
\node at (-0.2,1.4) {$2$};

\draw[very thick, dash dot] (4.2,0.7) -- (6.2,1.7);
\draw[very thick] (3.5,0.7) -- (4.2,0.7);
\draw[very thick] (3.5,0.7) -- (4.5,1.7);

\fill[black!5!white] (4.5,1.7) -- (3.5,0.7) -- (4.2,0.7) -- (6.2,1.7);

\node at (3.85,0.5) {\scriptsize weak $(1,1)$};
\node at (4.7,1.3) {\scriptsize strong $(p,p)$};
\node[rotate=45] at (3.86, 1.34) {\scriptsize weak $(p,p)$};
\node[rotate=26.56] at (5.29, 1.02) {\scriptsize r.w.t.\ $(p,p)$};

\node at (7,2.2) {\textbf{EXO, $\nu>0$}};

\filldraw[fill=black] (3.5,0.7) circle(1pt);
\filldraw[fill=black] (4.2,0.7) circle(1pt);

\end{tikzpicture}

\caption{Visualization of Theorems \ref{thm:maxWcls} and \ref{thm:maxWexo}, with the choices
$\nu =1/2$, $\nu=-1/2$, $\nu=0$ and $\nu=1/2$, respectively.
Note the phase shift occurring at $\nu=0$ in the exotic case.} \label{fig_1}

\end{figure}

Choosing $\delta=2\nu+1$, so that $x^{\delta}dx = d\eta_{\nu}$ is the natural weight, we see that for $\nu \le 0$ the operator
$\widetilde{W}_{*}^{\nu}$ behaves in a standard way, i.e.\ it is strong type $(p,p)$ for $p>1$ and of weak type $(1,1)$.
Actually, this was proved already in \cite{NSS}, even in a multi-dimensional situation.
The same standard behavior is shared by $W^{\nu}_{*}$ for the full classical range of the parameter $\nu > -1$.
However, when the exotic range is $\nu \in (0,1)$ the operator $\widetilde{W}_{*}^{\nu}$ is strong type $(p,p)$ if and only if
$\nu+1 < p < 1 + 1/\nu$, with restricted weak type $(p,p)$ and weak type $(p,p)$ at the endpoints, respectively.
This is an instance of the so-called pencil phenomenon, see e.g.\ \cite{MST,NoSj1}.

The power weights admitted in Theorem \ref{thm:maxWcls} are exactly power $A_p$ weights associated with the space
of homogeneous type $(\mathbb{R}_+,|\cdot|,d\eta_{\nu})$. But this is no longer true in the context of Theorem
\ref{thm:maxWexo}. Moreover, when $\nu \le -1$ the $A_p$ class in this context is not even defined.

The proof of Theorem \ref{thm:maxWexo} relies on a standard strategy used in \cite{BHNV} and other papers.
Sufficiency of the conditions will be obtained by establishing a control of $\widetilde{W}_{*}^{\nu}$ in terms of several
special operators whose mapping properties are known, see Section \ref{sec:tech}.
Necessity will be shown by constructing suitable counterexamples.

\begin{proof}[{Proof of Theorem \ref{thm:maxWexo}, sufficiency part.}]
Recall that $0 \neq \nu < 1$.
We shall use the following estimate of $\widetilde{W}_t^{\nu}(x,y)$,
which is a consequence of the basic Bessel function asymptotics \eqref{bes:est}.
It can also be deduced from \eqref{exoWcls} and \cite[Lemma 3.1]{BHNV}.
For each $\nu < 1$, there exists $c>0$ such that
$$
\widetilde{W}_t^{\nu}(x,y) \lesssim 
		\begin{cases}
			x^{-2} y^{-2\nu}, & y \le x/2, \\
			(xy)^{-\nu-1} + t^{-1/2}(xy)^{-\nu-1/2} e^{(x-y)^2/4t}, & x/2 < y < 2x, \\
			x^{-2\nu} y^{-2} (y^2/t)^{-\nu+1} e^{-c y^2/t}, & 2x \le y,
		\end{cases}
$$
uniformly in $t,x,y > 0$. Consequently, $\widetilde{W}_*^{\nu}$ can be controlled by our special operators
(see Sections \ref{ssec:Hardy} and \ref{ssec:aux}), namely
$$
\widetilde{W}_{*}^{\nu}f(x) \lesssim H_0^1|f|(x) + M^4_{\loc}f(x) + T_{\psi}^{-2\nu}|f|(x),
$$
with $\psi(t,y) = (y^2/t)^{-\nu+1}e^{-cy^2/t}$.

Appealing now to Lemmas \ref{lem:H0}, \ref{lem:MH} and \ref{lem:Tpsi} we conclude the following mapping properties of
the operator $\widetilde{W}_{*}^{\nu}$, considered on the measure space $(\mathbb{R}_+,x^{\delta}dx)$.
For $1<p<\infty$, $\widetilde{W}_{*}^{\nu}$ is of strong type $(p,p)$ if $2\nu p -1 < \delta < 2p-1$.
Moreover, $\widetilde{W}_{*}^{\nu}$ is of strong type $(\infty,\infty)$ if $\nu < 0$. 
When $1< p < \infty$ and $2\nu p -1 \le \delta < 2p -1$, $\widetilde{W}_{*}^{\nu}$ is weak type $(p,p)$.
If $2\nu -1 \le \delta \le 1$, then $\widetilde{W}_{*}^{\nu}$ is of weak type $(1,1)$.
Finally, if $1< p < \infty$ and $2\nu p -1 \le \delta \le 2p -1$,
then $\widetilde{W}_{*}^{\nu}$ is of restricted weak type $(p,p)$. Altogether, this gives the sufficiency part in
Theorem \ref{thm:maxWexo}.
\end{proof}

\begin{proof}[{Proof of Theorem \ref{thm:maxWexo}, necessity part.}]
We will construct suitable counterexamples to prove the following statements (as before, we assume $0\neq \nu < 1$
and $1\le p < \infty$; the underlying space is always $(\mathbb{R}_+,x^{\delta}dx)$).
\begin{itemize}
\item[(A)] If $\widetilde{W}_{*}^{\nu}$ is of restricted weak type $(p,p)$ then $2\nu p -1 \le \delta \le 2p -1$.
\item[(B)] $\widetilde{W}_{*}^{\nu}$ is not of weak type $(p,p)$ when $p>1$ and $\delta = 2p-1$.
\item[(C)] $\widetilde{W}_{*}^{\nu}$ is not of strong type $(1,1)$ if $2\nu-1 \le \delta \le 1$.
\item[(D)] $\widetilde{W}_{*}^{\nu}$ is not of strong type $(p,p)$ when $p>1$ and $\delta = 2\nu p -1$.
\item[(E)] $\widetilde{W}_{*}^{\nu}$ is not of strong type $(\infty,\infty)$ when $\nu > 0$.
\end{itemize}
To this end, we always consider $f \ge 0$. Note that (A)--(E) altogether give the desired conclusion.

We begin with proving (A). First, observe that the 
standard asymptotics \eqref{bes:est} for the Bessel function lead to the following lower bounds for the kernel:
\begin{align*}
\widetilde{W}_t^{\nu}(x,y) & \gtrsim t^{\nu-1} (xy)^{-2\nu} e^{-(x^2+y^2)/4t}, \qquad xy < t, \\
\widetilde{W}_t^{\nu}(x,y) & \gtrsim t^{-1/2} (xy)^{-\nu-1/2} e^{-(x-y)^2/4t}, \qquad xy > t.
\end{align*}
Consequently, we get the lower bounds
\begin{align} \label{3exo}
\widetilde{W}_t^{\nu}f(x) & \gtrsim t^{\nu-1} x^{-2\nu} \int_0^{t/x} y e^{-(x^2+y^2)/4t}f(y)\, dy, \qquad t,x > 0,\\
\widetilde{W}_t^{\nu}f(x) & \gtrsim t^{-1/2} x^{-\nu-1/2} \int_{t/x}^{\infty} y^{\nu+1/2} e^{-(x-y)^2/4t} f(y)\, dy,
	\qquad t,x > 0. \label{4exo}
\end{align}

Taking $t=x^2$ in \eqref{3exo} we infer that
\begin{equation} \label{5exo}
\widetilde{W}_{*}^{\nu}f(x) \gtrsim x^{-2} \int_0^{x} y f(y)\, dy, \qquad x > 0.
\end{equation}
Consider now the specific $f= \chi_{(1,2)}$. The above implies
\begin{equation} \label{55exo}
\widetilde{W}_{*}^{\nu}f(x) \gtrsim x^{-2}, \qquad x > 2.
\end{equation}
Therefore, if $\widetilde{W}_{*}^{\nu}$ satisfies the restricted weak type $(p,p)$ inequality, then
$$
\lambda^p \int_{\{y> 2: y^{-2}> \lambda\}} x^{\delta}\, dy \lesssim 1, \qquad \lambda > 0.
$$
Taking into account $\lambda>0$ small, this implies that the function $\lambda \mapsto \lambda^{-(\delta+1)/2+p}$ is bounded
in a neighborhood of $0$, hence necessarily $-(\delta+1)/2+p \ge 0$, i.e.\ $\delta \le 2p -1$.
This gives the upper bound for $\delta$ in (A).

On the other hand, taking $t=2$ in \eqref{3exo} we get
$$
\widetilde{W}_{*}^{\nu}f(x) \gtrsim x^{-2\nu} \int_0^2 y f(y)\, dy, \qquad 0 < x <1.
$$
Thus, with $f=\chi_{(1,2)}$ chosen as before,
\begin{equation} \label{6exo}
\widetilde{W}_{*}^{\nu}f(x) \gtrsim x^{-2\nu}, \qquad 0 < x < 1.
\end{equation}
The restricted weak type $(p,p)$ inequality for $\widetilde{W}_{*}^{\nu}$ then implies
\begin{equation} \label{9exo}
\lambda^p \int_{\{0 < y < 1: y^{-2\nu} > \lambda\}} x^{\delta} \, dx \lesssim 1, \qquad \lambda > 0.
\end{equation}
If $\nu < 0$ and $\delta < -1$ (note that in this case $2\nu p -1 < -1$), and $\lambda>0$ is small, this leads to
$$
\lambda^p \int_{\lambda^{-1/2\nu}}^1 x^{\delta} \, dx \simeq \lambda^{-(\delta+1)/2\nu+p} \lesssim 1.
$$
Letting $\lambda \to 0^+$, we conclude that $-(\delta+1)/2\nu + p\ge 0$, i.e.\ $2\nu p -1 \le \delta$.
If $\nu > 0$ (in this case $2\nu p -1 > -1$), for $\lambda$ large \eqref{9exo} implies
$$
1 \gtrsim \lambda^p \int_0^{\lambda^{-1/2\nu}} x^{\delta}\, dx \simeq 
 \begin{cases}
	\lambda^{-(\delta+1)/2\nu+p}, & \delta > -1, \\
	\infty, & \delta \le -1.
 \end{cases}
$$
Letting $\lambda \to \infty$, we obtain $-(\delta+1)/2\nu+p \le 0$, i.e.\ $2\nu p -1 \le \delta$.
The lower bound for $\delta$ in (A) follows.$^{\ddagger}$
\footnotetext{${\ddagger}$ The corresponding counterexample in the proof of \cite[Theorem 2.1]{BHNV} is wrong,
since the second bound on p.\ 114, line 6 there does not hold because of the exponential factor.
Fortunately, the problem can easily be fixed by using the bound $W_2^{\nu}(x,y) \gtrsim 1$, $x,y \in (0,2)$, and
as a consequence $W_{*}^{\nu}f(x) \gtrsim 1$, $x \in (0,1)$, with $f=\chi_{(1,2)}$.
The correct counterexample was pointed out by J.J.\ Betancor.
}

Observe also that \eqref{6exo} readily implies (E). Moreover, it implies (D) as well, since the necessary condition for
the function $x \mapsto \chi_{(0,1)}(x) x^{-2\nu}$ to be in $L^p(x^{\delta}dx)$ is $\delta > 2\nu p -1$.

Next we show (B). Let $\delta = 2p-1$ for some $1<p<\infty$. Suppose on the contrary that $\widetilde{W}_{*}^{\nu}$
is weak type $(p,p)$ with respect to $(\mathbb{R}_+,x^{\delta}dx)$. It is straightforward to check that the function
$x \mapsto \chi_{(0,1)}(x) x^{1-\delta}$ is not in $L^{p'}(x^{\delta}dx)$,
hence it does not define a bounded functional on $L^p(x^{\delta}dx)$ via the standard pairing.
Therefore, there exists a sequence of non-negative functions $f_n$, $n=1,2,\ldots$, in $L^p(\mathbb{R}_+,x^{\delta}dx)$ such that
$\|f_n\|_{L^p(x^{\delta}dx)} \le 1$ and $\int_0^1 f_n(x) x \, dx \ge n$. Further, by \eqref{5exo},
$$
\widetilde{W}_{*}^{\nu}f_n(x) \gtrsim n x^{-2}, \qquad n=1,2,\ldots, \quad x > 1.
$$
Now we see that the supposed weak type $(p,p)$ estimate for $\widetilde{W}_{*}^{\nu}$ implies
$$
\int_{\{y>1 : n y^{-2} > n-1\}} x^{\delta} \, dx \lesssim \frac{\|f_n\|^p_{L^p(x^{\delta}dx)}}{(n-1)^p} \le \frac{1}{(n-1)^p},
\qquad n \ge 2.
$$
Since the integral above is comparable to $n^p/(n-1)^p-1$, this means that the sequence
$n^p-(n-1)^p$ is bounded, which is a contradiction because $p>1$.

Considering (C), we actually give a counterexample showing that $\widetilde{W}_{*}^{\nu}$ is not bounded
on $L^1(x^{\delta}dx)$ for any $\delta \in \mathbb{R}$. Restricting the interval
of integration in \eqref{4exo} we get
$$
\widetilde{W}_t^{\nu}f(x) \gtrsim t^{-1/2} \int_{x/2}^{2x} e^{-(x-y)^2/4t} f(y)\, dy, \qquad t < 1/2, \quad x> 1.
$$
From here we argue as in \cite[p.\,114--115]{BHNV}. For $0 < \epsilon < 1/2$, consider $1 < y < 1+\epsilon$
and $1+2\epsilon< x < 2$. Then $x/2 < y < 2x$. Moreover, choosing $t=(x-1)^2/2 < 1/2$ we have
$(x-y)^2/4t \lesssim 1$. Taking $f_{\epsilon} = \chi_{(1,1+\epsilon)}$, we arrive at the bound
$$
\widetilde{W}_{*}^{\nu}f_{\epsilon}(x) \gtrsim \frac{\epsilon}{x-1}, \qquad 1+2\epsilon < x < 2,
$$
uniformly in $\epsilon$. This leads to the norm estimate
$$
\big\| \widetilde{W}_{*}^{\nu}f_{\epsilon} \big\|_{L^1(x^{\delta}dx)} \gtrsim \epsilon \int_{1+2\epsilon}^{2}
	\frac{x^{\delta}\, dx}{x-1} \simeq \epsilon \log\frac{1}{2\epsilon}.
$$
On the other hand, $\|f_{\epsilon}\|_{L^1(x^{\delta}dx)} \simeq \epsilon$. Letting $\epsilon \to 0^+$
we get the conclusion.

Proving the necessity part in Theorem \ref{thm:maxWexo} is complete.
\end{proof}

\begin{rem}
Strong type results in Theorem \ref{thm:maxWexo}(a) can be transferred more directly from Theorem \ref{thm:maxWcls}(a) by means
of Proposition \ref{thm:trans}. But no similar simple transference exists in case of weak and restricted weak type
estimates.
\end{rem}

Let $\widetilde{P}_t^{\nu}$ and $\widetilde{P}_{*}^{\nu}$ be the Bessel-Poisson semigroup
$\{\exp(-t(B_{\nu}^{\textrm{exo}})^{1/2})\}$ analogues of
$\widetilde{W}_t^{\nu}$ and $\widetilde{W}_{*}^{\nu}$, and similarly for the classical (untilded) objects.
In view of \eqref{exoWcls} and the subordination principle,
\begin{equation} \label{exoPcls}
\widetilde{P}_t^{\nu}(x,y) = (xy)^{-2\nu} P_t^{-\nu}(x,y), \qquad t,x,y > 0.
\end{equation}

We have the following bounds for $\widetilde{P}_t^{\nu}(x,y)$ and $\widetilde{P}_{*}^{\nu}$.
\begin{pro}
Let $\nu < 1$. Then, for any $C>1$ fixed,
\begin{align*}
\widetilde{P}_{t}^{\nu}(x,y) & \gtrsim \frac{t}{t^2+(x-y)^2}, \qquad 1<x,y<2, \quad t \le C, \\
\widetilde{P}_{t}^{\nu}(x,y) & \gtrsim \frac{(xy)^{-2\nu}}{(1+y^2)^{-\nu+3/2}},
	\qquad 0 < x < 1, \quad C^{-1} \le t \le C,\\
\widetilde{P}_{t}^{\nu}(x,y) & \gtrsim \chi_{\{y < x\}} x^{-2} y^{-2\nu}, \qquad x,y > 0, \quad C^{-1}x \le t \le Cx.
\end{align*}
\end{pro}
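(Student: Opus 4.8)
The plan is to derive all three estimates from subordination. Recall that
\[
\widetilde{P}_t^{\nu}(x,y) = \frac{t}{2\sqrt{\pi}} \int_0^{\infty} \frac{e^{-t^2/(4s)}}{s^{3/2}}\, \widetilde{W}_s^{\nu}(x,y)\, ds, \qquad t,x,y>0,
\]
which follows from \eqref{exoPcls} together with the subordination formula for the classical objects (or directly, all kernels being positive). I would combine this with the pointwise lower bounds for the exotic heat kernel recorded in the necessity part of the proof of Theorem~\ref{thm:maxWexo}, valid for every $\nu<1$: one has $\widetilde{W}_s^{\nu}(x,y) \gtrsim s^{\nu-1}(xy)^{-2\nu} e^{-(x^2+y^2)/(4s)}$ when $xy<s$, and $\widetilde{W}_s^{\nu}(x,y) \gtrsim s^{-1/2}(xy)^{-\nu-1/2} e^{-(x-y)^2/(4s)}$ when $xy>s$. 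For each of the three claimed lower bounds the idea is the same: restrict the $s$-integral to a well-chosen subinterval on which (i) the corresponding Bessel regime ($xy<s$ or $xy>s$) holds, (ii) every exponential factor ($e^{-t^2/(4s)}$ and the Gaussian coming from the heat bound) is bounded below by a positive constant, and (iii) $s$ is comparable to the governing quantity; a short elementary integration then completes the argument.

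In detail, for the first bound I would use $1<x,y<2$ and $t\le C$ and restrict to $s\in(0,1/4)$, so that $xy>1>s$ and $(xy)^{-\nu-1/2}\simeq 1$; the heat bound gives $\widetilde{W}_s^{\nu}(x,y)\gtrsim s^{-1/2}e^{-(x-y)^2/(4s)}$ and the subordination integral is $\gtrsim t\int_0^{1/4} s^{-2} e^{-(t^2+(x-y)^2)/(4s)}\,ds$, which by the substitution $u=(t^2+(x-y)^2)/(4s)$ is $\simeq \frac{t}{t^2+(x-y)^2}\,e^{-(t^2+(x-y)^2)}$; since $t^2+(x-y)^2\lesssim 1$ the exponential is $\gtrsim 1$ and the first bound follows. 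For the second bound I would use $0<x<1$, $C^{-1}\le t\le C$, and restrict to $s\in(1+y^2,\,2(1+y^2))$, where $xy<y\le 1+y^2\le s$ so the $xy<s$ regime applies, while $x^2+y^2\le 1+y^2\le s$ and $t^2/(4s)\le C^2/4$ force both exponentials to be $\gtrsim 1$; since $s\simeq 1+y^2$ on this interval of length $\simeq 1+y^2$, integration yields $(xy)^{-2\nu}(1+y^2)^{\nu-3/2}$, which is the claim. For the third bound I would use $y<x$, $C^{-1}x\le t\le Cx$, and restrict to $s\in(x^2,2x^2)$, where $xy<x^2\le s$; here $x^2+y^2<2x^2\le 2s$ and $t^2/(4s)\simeq 1$ keep the exponentials $\gtrsim 1$, and $s\simeq x^2$ on an interval of length $\simeq x^2$ gives the lower bound $x^{2\nu-2}(xy)^{-2\nu}=x^{-2}y^{-2\nu}$.

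Once the subordination identity and the heat-kernel lower bounds are in place, the argument is essentially bookkeeping; the only mildly delicate points, and the place where care is required, are verifying in each case that the chosen $s$-range really lies in the declared Bessel regime and that the hypotheses on $t$ (respectively $t\le C$, $t\simeq 1$, $t\simeq x$) are precisely what is needed to keep every exponential factor away from $0$. As an alternative, one could bypass subordination and instead deduce the statement from \eqref{exoPcls} together with analogous lower bounds for the classical Bessel--Poisson kernel $P_t^{-\nu}(x,y)$ (with $-\nu>-1$), in the spirit of \cite[Lemma 3.1]{BHNV}; I favour the subordination route since it is self-contained and works uniformly in the whole exotic range $\nu<1$, including $\nu\in(0,1)$, where $-\nu\in(-1,0)$.
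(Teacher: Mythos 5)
Your argument is correct, but it takes a genuinely different route from the paper. The paper's proof is a one\--liner: it combines the transference identity \eqref{exoPcls}, $\widetilde{P}_t^{\nu}(x,y)=(xy)^{-2\nu}P_t^{-\nu}(x,y)$, with the ready\--made lower bounds for the classical Bessel--Poisson kernel in \cite[Proposition 6.1]{BHNV} (applied with parameter $-\nu>-1$); the three stated estimates then drop out after multiplying by $(xy)^{-2\nu}$ and noting $(xy)^{-2\nu}\simeq 1$ on $1<x,y<2$. You instead rederive everything from the subordination formula and the two lower bounds for $\widetilde{W}_s^{\nu}$ already recorded in the necessity part of the proof of Theorem \ref{thm:maxWexo}, and your bookkeeping checks out in each case: the choice $s\in(0,1/4)$ keeps $xy>s$ and the substitution $u=(t^2+(x-y)^2)/(4s)$ gives exactly $\frac{4t}{a}e^{-a}$ with $a=t^2+(x-y)^2\le C^2+1$ bounded; the choices $s\simeq 1+y^2$ and $s\simeq x^2$ lie in the $xy<s$ regime (via $y\le 1+y^2$ and $y<x$, respectively) with all exponentials bounded below, and the one\--line integrations produce the claimed powers. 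What the paper's route buys is brevity, at the cost of importing a nontrivial external result whose proof for the classical Poisson kernel is essentially the same subordination computation; what your route buys is a self\--contained argument, uniform in the whole range $\nu<1$, that makes visible exactly which regime of the Bessel asymptotics drives each of the three estimates. The alternative you mention at the end is, in substance, the paper's actual proof.
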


\begin{proof}
Combine \eqref{exoPcls} with \cite[Proposition 6.1]{BHNV}.
\end{proof}

\begin{cor} \label{cor:P}
Let $\nu < 1$ be fixed. Then, uniformly in $f \ge 0$,
\begin{align*}
\widetilde{P}_{*}^{\nu}f(x) & \gtrsim \frac{1}{x-1} \int_1^2 \frac{f(y)\, dy}{1+ \big(\frac{x-y}{x-1}\big)^2},
	\qquad 1 < x < 2,\\
\widetilde{P}_{*}^{\nu}f(x) & \gtrsim x^{-2\nu} \int_0^{\infty} \frac{y f(y)\, dy}{(1+y^2)^{-\nu+3/2}}, \qquad 0 < x < 1,\\
\widetilde{P}_{*}^{\nu}f(x) & \gtrsim x^{-2} \int_0^x y f(y)\, dy, \qquad x > 0.
\end{align*} 
\end{cor}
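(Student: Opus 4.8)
The plan is to read off all three inequalities as immediate consequences of the pointwise lower bounds for $\widetilde{P}_t^{\nu}(x,y)$ recorded in the preceding Proposition. Since we work with $f\ge 0$, for every fixed $t>0$ we have $\widetilde{P}_*^{\nu}f(x)\ge \widetilde{P}_t^{\nu}f(x)=\int_0^{\infty}\widetilde{P}_t^{\nu}(x,y)f(y)\,d\eta_{\nu}(y)$, where $d\eta_{\nu}(y)=y^{2\nu+1}\,dy$. So it suffices, for each target estimate, to choose a value (or an admissible range) of $t$ adapted to the relevant range of $x$, insert the matching kernel bound, restrict the $y$-integration if needed, and carry out the resulting elementary integral.

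First I would handle the estimate on $(1,2)$: fixing some $C>1$ and specializing the first kernel bound to $t=x-1$ (legitimate because $0<x-1<1\le C$ when $1<x<2$) gives $\widetilde{P}_{x-1}^{\nu}(x,y)\gtrsim \frac{x-1}{(x-1)^2+(x-y)^2}$ for $1<y<2$; restricting the integration to $y\in(1,2)$, where $y^{2\nu+1}\simeq 1$, and factoring out $(x-1)^{-1}$ yields exactly the first claimed inequality. For the estimate on $(0,1)$ I would use the second kernel bound at any fixed $t_0\in[C^{-1},C]$, which reads $\widetilde{P}_{t_0}^{\nu}(x,y)\gtrsim (xy)^{-2\nu}(1+y^2)^{\nu-3/2}$ for all $y>0$; integrating against $d\eta_{\nu}$, the factor $y^{2\nu+1}$ combines with $y^{-2\nu}$ to leave the weight $y$, which is precisely the second inequality. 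Finally, for the bound valid for all $x>0$ I would take $t=x\in[C^{-1}x,Cx]$ in the third kernel bound, getting $\widetilde{P}_x^{\nu}(x,y)\gtrsim \chi_{\{y<x\}}\,x^{-2}y^{-2\nu}$, and once more $y^{-2\nu}\cdot y^{2\nu+1}=y$ turns this into $x^{-2}\int_0^x yf(y)\,dy$.

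I do not expect any genuine obstacle: the entire content is the correct choice of the dilation parameter $t$ in each regime, which the Proposition has already been arranged to make possible. The only points worth stating explicitly are that nonnegativity of $f$ is what permits replacing the supremum over $t$ by a single convenient $t$, and that the measure factor $y^{2\nu+1}$ from $d\eta_{\nu}$ is exactly what normalizes the $(xy)^{-2\nu}$-type kernel bounds to the clean weights appearing in the statement (and is comparable to $1$ on $(1,2)$ in the first case).
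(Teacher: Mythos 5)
Your argument is correct and is exactly the intended derivation: the paper states Corollary \ref{cor:P} without proof precisely because it follows from the preceding Proposition by bounding $\widetilde{P}_{*}^{\nu}f(x)\ge\widetilde{P}_{t}^{\nu}f(x)$ for $f\ge 0$ with the choices $t=x-1$, $t$ fixed in $[C^{-1},C]$, and $t=x$, respectively, and absorbing the factor $y^{2\nu+1}$ from $d\eta_{\nu}$ as you do. No issues.
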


It was shown in \cite[Theorem 2.2]{BHNV} that Theorem \ref{thm:maxWcls} holds if $W_{*}^{\nu}$ is replaced
by $P_{*}^{\nu}$. A similar statement is true for the exotic counterparts.
\begin{pro} \label{prop:Poisson}
Statement of Theorem \ref{thm:maxWexo} remains true if $\widetilde{W}_{*}^{\nu}$ is replaced by $\widetilde{P}_{*}^{\nu}$.
\end{pro}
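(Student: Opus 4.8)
The plan is to follow the two-step scheme of the proof of Theorem~\ref{thm:maxWexo}: obtain the sufficiency part directly from that theorem by subordination, and the necessity part by recycling the very same counterexamples, now fed by Corollary~\ref{cor:P}.

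For the sufficiency part, I would invoke the subordination principle. The classical identity $P_s^{-\nu} = \int_0^{\infty} \mu_s(dr)\, W_r^{-\nu}$, with $\mu_s$ the probability measure $\mu_s(dr) = \frac{s}{2\sqrt{\pi}}\, r^{-3/2} e^{-s^2/(4r)}\, dr$, transfers at once to the exotic kernels by \eqref{exoWcls} and \eqref{exoPcls}: indeed $\widetilde{P}_s^{\nu}(x,y) = (xy)^{-2\nu}P_s^{-\nu}(x,y) = \int_0^{\infty}\mu_s(dr)\,(xy)^{-2\nu}W_r^{-\nu}(x,y) = \int_0^{\infty}\mu_s(dr)\,\widetilde{W}_r^{\nu}(x,y)$, hence $\widetilde{P}_s^{\nu}f = \int_0^{\infty}\mu_s(dr)\,\widetilde{W}_r^{\nu}f$. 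Since $\mu_s$ is a probability measure, this gives the pointwise domination $\widetilde{P}_{*}^{\nu}f(x) \le \widetilde{W}_{*}^{\nu}f(x)$, and all the positive (strong, weak, restricted weak and $L^{\infty}$) mapping properties claimed for $\widetilde{P}_{*}^{\nu}$ follow immediately from Theorem~\ref{thm:maxWexo}. If one prefers to avoid subordination, the same conclusions can be reached by repeating the sufficiency argument for $\widetilde{W}_{*}^{\nu}$ verbatim, starting from upper kernel bounds for $\widetilde{P}_t^{\nu}$ deduced from \eqref{exoPcls} and the Poisson--Bessel estimates in \cite[Proposition~6.1]{BHNV}; but the subordination route is shorter.

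For the necessity part, I would go through statements (A)--(E) exactly as in the necessity proof of Theorem~\ref{thm:maxWexo}, each time replacing the heat lower bounds \eqref{5exo}, \eqref{55exo}, \eqref{6exo}, \eqref{4exo} by the corresponding bound from Corollary~\ref{cor:P}. Concretely: the third bound in Corollary~\ref{cor:P} coincides with \eqref{5exo}, so taking $f = \chi_{(1,2)}$ yields $\widetilde{P}_{*}^{\nu}f(x) \gtrsim x^{-2}$ for $x > 2$, which reproduces the upper bound $\delta \le 2p-1$ in (A) and statement (B) word for word. The second bound in Corollary~\ref{cor:P}, evaluated at $f = \chi_{(1,2)}$, gives $\widetilde{P}_{*}^{\nu}f(x) \gtrsim x^{-2\nu}$ for $0 < x < 1$, the exact analogue of \eqref{6exo}, which supplies the lower bound $2\nu p - 1 \le \delta$ in (A) as well as (D) and (E). Finally, the first bound in Corollary~\ref{cor:P}, applied to $f_{\epsilon} = \chi_{(1,1+\epsilon)}$, gives $\widetilde{P}_{*}^{\nu}f_{\epsilon}(x) \gtrsim \epsilon/(x-1)$ for $1+2\epsilon < x < 2$ (there one has $\frac{x-y}{x-1} \in (\frac12,1)$ whenever $1 < y < 1+\epsilon$, so the denominator $1 + (\frac{x-y}{x-1})^2$ stays bounded), which is precisely the ingredient used to prove (C). Thus (A)--(E) hold for $\widetilde{P}_{*}^{\nu}$ with the same proofs, completing the necessity part.

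I do not anticipate any real obstacle here: the subordination inequality $\widetilde{P}_{*}^{\nu} \le \widetilde{W}_{*}^{\nu}$ makes the positive results automatic, and Corollary~\ref{cor:P} has been tailored to furnish exactly the lower bounds required by the counterexamples. The only point needing a line of verification is that the $x$-ranges in which the bounds of Corollary~\ref{cor:P} are valid match those used in the heat-case counterexamples, which they do.
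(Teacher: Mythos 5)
Your proposal is correct and follows exactly the route the paper indicates: sufficiency via the subordination domination $\widetilde{P}_{*}^{\nu}f \le \widetilde{W}_{*}^{\nu}f$ combined with Theorem \ref{thm:maxWexo}, and necessity by rerunning counterexamples (A)--(E) with the lower bounds of Corollary \ref{cor:P} in place of \eqref{5exo}, \eqref{55exo}, \eqref{6exo} and \eqref{4exo}. The details you supply (the probability measure $\mu_s$, the matching of $x$-ranges, and the boundedness of $1+(\tfrac{x-y}{x-1})^2$ in the (C) counterexample) are precisely the ones the paper leaves to the reader, and they check out.
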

Indeed, the sufficiency part is then easily concluded by Theorem \ref{thm:maxWexo} and the subordination principle.
The necessity part is less obvious; it follows by the estimates from Corollary \ref{cor:P}
and exactly the same counterexamples as those already presented in the proof of Theorem \ref{thm:maxWexo}.
We leave the details to interested readers.

%%%%%%%%%%%%%%%%%%%%%%%%%%%%%%%%%%%%%%%%%%%%%%%%%%%%%%%%%%%%%%%
%%%%%%%%%%%%%%%%%%%%%%%%%%%%%%%%%%%%%%%%%%%%%%%%%%%%%%%%%%%%%%%

\section{Riesz transforms} \label{sec:riesz}

Recall (see e.g.\ \cite{BHNV}) that the Riesz transform associated with the classical Bessel operator $B_{\nu}^{\textrm{cls}}$
is formally defined by
\begin{equation} \label{R_fd}
R_{\nu} = D \big( B_{\nu}^{\textrm{cls}} \big)^{-1/2}, \qquad \nu > -1.
\end{equation}
Here $D = \frac{d}{dx}$ is the standard derivative. Its appearance is justified by the following factorization of the Bessel
differential operator:
\begin{equation} \label{B_dec}
B_{\nu} = D_{\nu}^{*} D, \qquad \textrm{where} \quad D_{\nu}^{*} = - x^{-2\nu-1}\frac{d}{dx} x^{2\nu+1}.
\end{equation}
Note that $D_{\nu}^{*}$ is the formal adjoint of $D$ in $L^2(d\eta_{\nu})$.

However, a rigorous definition of $R_{\nu}$ is not a straightforward issue, see e.g.\ \cite[Section 4]{BHNV}.
One considers test functions $f \in C_c^{\infty}(\mathbb{R}_+)$ and first defines the compensated potential operator
$$
\big( B_{\nu}^{\textrm{cls}}\big)^{-1/2} f(x) = \frac{1}{\sqrt{\pi}} \int_0^{\infty} \Big( W_t^{\nu}f(x)
	- \chi_{\{\nu \le -1/2\}} W_t^{\nu}f(0) \Big) \, \frac{dt}{\sqrt{t}}.
$$
Here $W_t^{\nu}f(0) = \lim_{x \to 0^+} W_t^{\nu}f(x)$ indeed exists for each $t > 0$.
The compensating term for $\nu \le -1/2$ is necessary since without it the integral would diverge.
One shows that the function $(B_{\nu}^{\textrm{cls}})^{-1/2}f$ is well defined and differentiable.
Then $R_{\nu}$ is defined strictly by \eqref{R_fd}. 

It is known \cite[Proposition 4.2]{BHNV} that for $f \in C_c^{\infty}(\mathbb{R}_+)$ the Riesz operator has
a singular integral representation
$$
R_{\nu}f(x) = \pv \int_0^{\infty} R_{\nu}(x,y) f(y)\, d\eta_{\nu}(y), \qquad x > 0,
$$
where the kernel is given by
$$
R_{\nu}(x,y) = \frac{1}{\sqrt{\pi}} \int_0^{\infty} \frac{\partial}{\partial x} W_t^{\nu}(x,y)\, \frac{dt}{\sqrt{t}},
	\qquad x,y > 0, \quad x \neq y.
$$

The Riesz transform $R_{\nu}$ extends uniquely to a bounded operator on $L^2(d\eta_{\nu})$.
This extension can be represented in terms of the (modified) Hankel transform, see \cite[Section 2]{BHNV}, and the same
is true for $R_{\nu}^{*}$, the adjoint of $R_{\nu}$ in $L^2(d\eta_{\nu})$. More precisely,
\begin{align*}
R_{\nu}f(x) & = - x h_{\nu+1} \big( y^{-1}h_{\nu}(y)\big)(x), \qquad f \in L^2(d\eta_{\nu}), \\
R_{\nu}^{*}f(x) & = -h_{\nu}\big( y h_{\nu+1}\big(z^{-1}f(z)\big)(y)\big)(x), \qquad f \in L^2(d\eta_{\nu}),
\end{align*}
where
$$
h_{\nu}f(z) = \int_0^{\infty} f(x) \varphi_z^{\nu}(x)\, d\eta_{\nu}(x), \qquad z > 0, \quad \nu > -1,
$$
is the modified Hankel transform, being $\varphi_z^{\nu}(x) = (xz)^{-\nu}J_{\nu}(xz)$ with $J_{\nu}$ denoting
the (oscillating) Bessel function of the first kind and order $\nu$, cf.\ e.g.\ \cite{Leb,lib,handbook,Wat}.
Note the identities
$$
R_{\nu}^{*} R_{\nu} = R_{\nu} R_{\nu}^{*} = \textrm{Id}
$$
that hold on $L^2(d\eta_{\nu})$. Note also that $R_{\nu}^{*}$ admits a singular integral representation on
$C_c^{\infty}(\mathbb{R}_+)$, the kernel being $R_{\nu}^{*}(x,y) = R_{\nu}(y,x)$.

Now we pass to the Riesz transform associated with the exotic Bessel operator $B_{\nu}^{\textrm{exo}}$.
Similarly as in the classical case, taking into account \eqref{B_dec} we formally define
\begin{equation} \label{R_fde}
\widetilde{R}_{\nu} = D \big( B_{\nu}^{\textrm{exo}}\big)^{-1/2}, \qquad \nu < 1.
\end{equation}
To make this rigorous for, say, $f \in C_c^{\infty}(\mathbb{R}_+)$, we define suitably the compensated potential operator
$(B_{\nu}^{\textrm{exo}})^{-1/2}$. Let
$$
\big( B_{\nu}^{\textrm{exo}}\big)^{-1/2} f(x) = \frac{1}{\sqrt{\pi}} \int_0^{\infty} \Big(
	\widetilde{W}_t^{\nu}f(x) - \chi_{\{\nu \ge 1/2\}} x^{-2\nu} W_t^{-\nu}\big(y^{2\nu}f(y)\big)(0) \Big)\, \frac{dt}{\sqrt{t}}.
$$
Here the compensating term $x^{-2\nu} W_t^{-\nu}(y^{2\nu}f)(0)$ is needed in case $\nu \ge 1/2$ to make the integral convergent.
Note that the seemingly more suitable term $\widetilde{W}_t^{\nu}f(0) = \lim_{x \to 0^{+}} \widetilde{W}_t^{\nu}f(x)$
cannot be used for this purpose.

It follows that
\begin{align} \nonumber
\big( B_{\nu}^{\textrm{exo}}\big)^{-1/2}f(x) & = x^{-2\nu} \frac{1}{\sqrt{\pi}} \int_0^{\infty} \Big(
	W_t^{-\nu}\big( y^{2\nu}f(y) \big)(x) - \chi_{\{\nu \ge 1/2\}} W_t^{-\nu}\big( y^{2\nu}f(y)\big)(0) \Big)\, \frac{dt}{\sqrt{t}}\\
& = x^{-2\nu} \big( B_{-\nu}^{\textrm{cls}} \big)^{-1/2}\big(y^{2\nu}f(y)\big)(x). \label{Bpe}
\end{align}
Thus $(B_{\nu}^{\textrm{exo}})^{-1/2}f$ is well defined and differentiable for $f \in C_c^{\infty}(\mathbb{R}_+)$, and now
\eqref{R_fde} has a rigorous meaning. Moreover, by \eqref{Bpe},
\begin{equation} \label{37}
\widetilde{R}_{\nu}f(x) = x^{-2\nu} R_{-\nu}\big( y^{2\nu}f(y)\big)(x)
	- \frac{2\nu}x x^{-2\nu}\big(B_{-\nu}^{\textrm{cls}}\big)^{-1/2}\big(y^{2\nu}f(y)\big)(x),
\end{equation}
where $(B_{\nu}^{\textrm{cls}})^{-1/2}$ is the classical compensated potential operator.

We have the singular integral representation
$$
\widetilde{R}_{\nu}f(x) = \pv \int_0^{\infty} \widetilde{R}_{\nu}(x,y) f(y)\, d\eta_{\nu}(y), \qquad x > 0,
$$
valid for $f \in C_c^{\infty}(\mathbb{R}_+)$, where the kernel is
\begin{equation} \label{75}
\widetilde{R}_{\nu}(x,y) = (xy)^{-2\nu} \frac{1}{\sqrt{\pi}} \int_0^{\infty} \Big( \frac{\partial}{\partial x} W_t^{-\nu}(x,y)
	- \frac{2\nu}x W_t^{-\nu}(x,y) + \chi_{\{\nu \ge 1/2\}} \frac{2\nu}x W_t^{-\nu}(0,y) \Big) \, \frac{dt}{\sqrt{t}},
\end{equation}
being $W_t^{-\nu}(0,y) = \lim_{x\to 0^+} W_t^{-\nu}(x,y)$.

Using formulas \eqref{bes:dif} and \eqref{bes:rec} one computes that, for $\nu < 0$,
\begin{align*}
\frac{\partial}{\partial x} W_t^{-\nu}(x,y) - \frac{2\nu}x W_t^{-\nu}(x,y)
	& = \frac{(xy)^{\nu}}{(2t)^2} \exp\bigg( -\frac{x^2+y^2}{4t}\bigg)
		\bigg( y I_{-\nu-1}\Big(\frac{xy}{2t}\Big) - x I_{-\nu}\Big(\frac{xy}{2t}\Big)\bigg) \\
	& = -(xy)^{-1} \frac{\partial}{\partial x} W_t^{-\nu-1}(y,x).
\end{align*}
This means that
$$
\widetilde{R}_{\nu}(x,y) = - (xy)^{-2\nu-1} R_{-\nu-1}^{*}(x,y), \qquad \nu < 0,
$$
so the corresponding operators are related in the restricted range of $\nu$,
$$
\widetilde{R}_{\nu}f(x) = -x^{-2\nu-1} R_{-\nu-1}^{*}\big( y^{2\nu+1}f(y)\big)(x), \qquad \nu < 0.
$$

The last relation can be seen more directly on a formal level as follows. 
Recall that $B_{\nu} = D_{\nu}^{*} D$, $R_{\nu} = D(D_{\nu}^* D)^{-1/2}$, and $R_{\nu}^* = D_{\nu}^*(D D_{\nu}^*)^{-1/2}$.
Denote $V_{\nu}f(x) = x^{-2\nu-1}f(x)$, the inverse mapping being $V_{\nu}^{-1} = V_{-\nu-1}$.
Observe that $D_{\nu}^{*} = - V_{\nu} D V^{-1}_{\nu}$, so $D_{-\nu-1}^{*} = -V_{\nu}^{-1} D V_{\nu}$.
Further, $B_{\nu} = V_{\nu}(D D_{-\nu-1}^{*})V_{\nu}^{-1}$. Therefore,
$\widetilde{R}_{\nu} = -V_{\nu} R_{-\nu-1}^{*} V_{\nu}^{-1}$ and $\widetilde{R}_{\nu}^{*} = - V_{\nu} R_{-\nu-1} V_{\nu}^{-1}$
for $\nu < 0$.

Similarly as $R_{\nu}$,
the exotic Riesz transform $\widetilde{R}_{\nu}$ extends uniquely to a bounded operator on $L^2(d\eta_{\nu})$.
This extension is represented by means of the exotic (modified) Hankel transform (cf.\ \cite[Section 4]{NSS})
$$
\widetilde{h}_{\nu}f(z) = \int_0^{\infty} f(x) \widetilde{\varphi}_z^{\nu}(x)\, d\eta_{\nu}(x) =  z^{-2\nu} h_{-\nu}(x^{2\nu}f)(z),
\qquad \nu < 1,
$$
where $\widetilde{\varphi}_{z}^{\nu}(x) = (xz)^{-2\nu}\varphi_z^{-\nu}(x)$.
In fact, for $\nu < 1$ we have
\begin{align*}
\widetilde{R}_{\nu}f(x) & = - x^{-1}\widetilde{h}_{\nu-1}\big(y\widetilde{h}_{\nu}f(y)\big)(x), \qquad f \in L^2(d\eta_{\nu}),\\
\widetilde{R}_{\nu}^{*}f(x) & = - \widetilde{h}_{\nu}\big(y^{-1}\widetilde{h}_{\nu-1}\big(zf(z)\big)(y)\big)(x),
	\qquad f \in L^2(d\eta_{\nu}).
\end{align*}
To verify this one uses, among other things, the formula
$\frac{d}{dz}\widetilde{\varphi}_z^{\nu}(x) = -z^{-1}\widetilde{\varphi}_z^{\nu-1}(x)$, which in turn is a consequence of
the Bessel function differentiation rule \eqref{bes:dif}.
Note the $L^2(d\eta_{\nu})$ identities
$$
\widetilde{R}_{\nu}^{*} \widetilde{R}_{\nu} = \widetilde{R}_{\nu} \widetilde{R}_{\nu}^{*} = \textrm{Id}.
$$
Note also that $\widetilde{R}_{\nu}^{*}$ admits a singular integral representation on $C_c^{\infty}(\mathbb{R}_+)$,
with the kernel $\widetilde{R}_{\nu}^{*}(x,y) = \widetilde{R}_{\nu}(y,x)$.

In \cite[Theorem 2.3]{BHNV} and \cite[Proposition 2.4]{BHNV} the following characterization of mapping properties
of $R_{\nu}$ and $R_{\nu}^*$ was obtained.
\begin{thm}[{\cite{BHNV}}] \label{thm:Rcls}
Let $\nu > -1$, $1 \le p < \infty$, $\delta \in \mathbb{R}$. Then the operators $R_{\nu}$ and $R_{\nu}^{*}$, considered
on the measure space $(\mathbb{R}_+,x^{\delta}dx)$, have the following mapping properties:
\begin{itemize}
\item[(a1)] $R_{\nu}$ is of strong type $(p,p)$ if and only if $p > 1$ and $-1-p < \delta < (2\nu+2)p-1$;
\item[(a2)] $R_{\nu}^{*}$ is of strong type $(p,p)$ if and only if $p > 1$ and $-1 < \delta < 2(\nu+3/2)p -1$;
\item[(b1)] $R_{\nu}$ is of weak type $(p,p)$ if and only if $-1-p < \delta < (2\nu+2)p-1$,
	with both inequalities weakened in case $p=1$;
% or $\delta \in \{-2,2\nu+1\}$;
\item[(b2)] $R_{\nu}^{*}$ is of weak type $(p,p)$ if and only if $-1 < \delta < 2(\nu+3/2)p-1$,
	with the second inequality weakened in case $p=1$;
% or $\delta = 2\nu+2$;
\item[(c1)] $R_{\nu}$ is of restricted weak type $(p,p)$ if and only if $-1 - p \le \delta \le (2\nu+2)p-1$;
\item[(c2)] $R_{\nu}^{*}$ is of restricted weak type $(p,p)$ if and only if $-1 < \delta \le 2(\nu+3/2)p-1$.
\end{itemize}
Moreover,
$$
R_{\nu}^{*}R_{\nu}f = R_{\nu} R_{\nu}^{*}f = f, \qquad f \in L^p(\mathbb{R}_+,x^{\delta}dx),
$$
provided that $p > 1$ and $-1 < \delta < (2\nu+2)p-1$.
\end{thm}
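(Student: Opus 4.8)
The plan is to follow the scheme of \cite{BHNV}: obtain the sufficiency parts by dominating $R_{\nu}$ and $R_{\nu}^{*}$ pointwise by the auxiliary operators of Section~\ref{sec:tech}, whose mapping properties are already established, and obtain the necessity parts from explicit counterexamples built on sharp lower bounds for the Riesz kernel. Since $R_{\nu}^{*}(x,y)=R_{\nu}(y,x)$ on $C_c^{\infty}(\mathbb{R}_+)$, the two operators can be handled in parallel; in passing from $R_{\nu}$ to $R_{\nu}^{*}$ the ``$y$ small'', ``$y$ large'' and ``diagonal'' pieces of the kernel get interchanged, and this is exactly what explains the different ranges in (a1),(b1),(c1) and (a2),(b2),(c2).

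The first step is a careful analysis of the kernel. Starting from the formula $R_{\nu}(x,y)=\frac{1}{\sqrt{\pi}}\int_0^{\infty}\frac{\partial}{\partial x}W_t^{\nu}(x,y)\,\frac{dt}{\sqrt{t}}$ recalled above, computing $\frac{\partial}{\partial x}W_t^{\nu}$ by means of \eqref{bes:dif} and \eqref{bes:rec}, and invoking the Bessel asymptotics \eqref{bes:est}, one expects bounds of the form
\[
|R_{\nu}(x,y)| \lesssim
\begin{cases}
x^{-2\nu-2}, & y \le x/2, \\
\dfrac{(xy)^{-\nu-1/2}}{|y-x|} + \dfrac{1}{y}\Big(1 + \log\dfrac{xy}{(x-y)^{2}}\Big), & x/2 < y < 2x, \\
x\,y^{-2\nu-3}, & 2x \le y,
\end{cases}
\]
together with the sharper fact that on the diagonal strip the difference $R_{\nu}(x,y)-\frac{1}{\pi}\frac{(xy)^{-\nu-1/2}}{y-x}$ still obeys the middle bound. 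This yields, for $f\in C_c^{\infty}(\mathbb{R}_+)$, the pointwise control
$|R_{\nu}f(x)| \lesssim H_{0,2}^{2\nu+1}|f|(x) + |\mathcal{H}_{\nu,\loc}f(x)| + N^{\log}|f|(x) + H_{\infty,2}^{1}|f|(x)$
and, by $R_{\nu}^{*}(x,y)=R_{\nu}(y,x)$,
$|R_{\nu}^{*}f(x)| \lesssim H_{0,2}^{2\nu+2}|f|(x) + |\mathcal{H}_{\nu,\loc}f(x)| + N^{\log}|f|(x) + H_{\infty,2}^{0}|f|(x)$.
Now the results of Section~\ref{sec:tech} (Lemmas~\ref{lem:H0}, \ref{lem:Hinf}, \ref{lem:MH}, \ref{lem:NN} and Proposition~\ref{prop:Hb}, the latter to pass from the $b=2$ variants to $H_0^{\xi}$, $H_{\infty}^{\xi}$) deliver all the sufficiency parts: for $R_{\nu}$ the operators $H_0^{2\nu+1}$ and $H_{\infty}^{1}$ enforce precisely $-1-p<\delta<(2\nu+2)p-1$ with the stated $p=1$ weakenings and the restricted weak type extensions to the two boundary lines, for $R_{\nu}^{*}$ the operators $H_0^{2\nu+2}$ and $H_{\infty}^{0}$ enforce precisely $-1<\delta<2(\nu+3/2)p-1$ (the strict lower bound $-1<\delta$, valid also at $p=1$, reflecting the $\xi=0$ cases of Lemma~\ref{lem:Hinf}), while the diagonal contribution $\mathcal{H}_{\nu,\loc}+N^{\log}$ is bounded on every $L^p(x^{\delta}dx)$ with $1<p<\infty$ and is of weak but never strong type $(1,1)$, hence is harmless except for fixing the $p=1$ behaviour.

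For necessity I would test with $f=\chi_{(1,2)}$ and use the matching lower bounds $R_{\nu}(x,y)\gtrsim x^{-2\nu-2}$ for $y\le x/2$ and $R_{\nu}(x,y)\gtrsim x\,y^{-2\nu-3}$ for $2x\le y$, of constant sign in those regions, which come from the same computation using the lower part of \eqref{bes:est}. They give $R_{\nu}f(x)\gtrsim x^{-2\nu-2}$ for $x$ large and $R_{\nu}f(x)\gtrsim x$ for $x$ small; since $\chi_{(2,\infty)}(x)\,x^{-2\nu-2}$ belongs to weak $L^p(x^{\delta}dx)$ only when $\delta\le(2\nu+2)p-1$ and $\chi_{(0,1)}(x)\,x$ belongs there only when $\delta\ge-1-p$, this settles the restricted weak type necessity in (c1), and the transposed kernel bounds do the same for (c2). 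To separate strong from restricted weak type on the boundary lines (and weak from restricted weak there for $p>1$) one runs the sequence argument already used in part (B) of the proof of Theorem~\ref{thm:maxWexo}, together with the endpoint failures of the relevant Hardy operators in Lemmas~\ref{lem:H0} and \ref{lem:Hinf}; and the failure of strong type $(1,1)$ throughout the weak-$(1,1)$ range is inherited, via a localized counterexample, from the corresponding failure of $\mathcal{H}_{\nu,\loc}$ in Lemma~\ref{lem:MH}.

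Finally, for the composition identity: on $L^2(d\eta_{\nu})$ one has $R_{\nu}^{*}R_{\nu}=R_{\nu}R_{\nu}^{*}=\textrm{Id}$, hence also on $C_c^{\infty}(\mathbb{R}_+)$. When $p>1$ and $-1<\delta<(2\nu+2)p-1$, this $\delta$-range lies inside both admissible intervals, so by (a1) and (a2) both $R_{\nu}$ and $R_{\nu}^{*}$ are bounded on $L^p(\mathbb{R}_+,x^{\delta}dx)$; since $C_c^{\infty}(\mathbb{R}_+)$ is dense there, approximating $f$ by $f_n\in C_c^{\infty}(\mathbb{R}_+)$ and passing to the limit in $R_{\nu}^{*}R_{\nu}f_n=f_n$ and $R_{\nu}R_{\nu}^{*}f_n=f_n$ extends the identity to all of $L^p(\mathbb{R}_+,x^{\delta}dx)$. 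I expect the main obstacle to be the second step, namely the uniform-in-$t$ two-sided kernel estimates: establishing them in all three regions after the $t$-integration, and in particular isolating the principal-value part $\frac{1}{\pi}\frac{(xy)^{-\nu-1/2}}{y-x}$ on the diagonal strip with a remainder controlled by the kernel of $N^{\log}$. Once the kernel bounds are in hand, everything else is bookkeeping that runs exactly as in Section~\ref{sec:max} and in \cite{BHNV}.
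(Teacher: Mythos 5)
Your proposal is correct and takes essentially the same route as the source: the paper itself only quotes this theorem from \cite{BHNV}, and your scheme --- sharp three-region kernel estimates, pointwise domination by $H_0^{2\nu+1}$, $H_{\infty}^{1}$, $\mathcal{H}_{\nu,\loc}$, $N^{\log}$ (and their transposes $H_0^{2\nu+2}$, $H_{\infty}^{0}$ for $R_{\nu}^{*}$), sign-definite lower bounds for the counterexamples, and density plus the $L^2$ identity for the composition --- is exactly the argument of \cite{BHNV}, mirrored by the paper's own proofs of Theorems \ref{thm:Rexo} and \ref{thm:Rexob}. One slip to fix: in the diagonal strip the remainder must be $\mathcal{O}\big(y^{-2\nu-2}\big(1+\log\frac{xy}{(x-y)^2}\big)\big)$ as a kernel with respect to $d\eta_{\nu}$; your $\frac{1}{y}\big(1+\log\frac{xy}{(x-y)^2}\big)$ is the bound only after absorbing the factor $y^{2\nu+1}$ of $d\eta_{\nu}$, and with that correction the diagonal contribution is exactly $N^{\log}|f|$ as you claim (likewise, the sign-definite lower bounds hold on $y\le x/b$ and $y\ge bx$ for some possibly large $b>1$ rather than $b=2$, which changes nothing in the counterexamples).
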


The main result of this section is an analogous characterization in the exotic case.
It is convenient to write separate statements for $\nu < 1/2$ and $\nu \ge 1/2$, Theorems \ref{thm:Rexo}
and \ref{thm:Rexob}, respectively, since the two cases differ qualitatively.
Visualizations of these results are given on Figures \ref{fig_riesz_set1} and \ref{fig_riesz_set2} below.
\begin{thm} \label{thm:Rexo}
Let $0 \neq \nu < 1/2$, $1 \le p < \infty$, $\delta \in \mathbb{R}$. Then the exotic Riesz operators $\widetilde{R}_{\nu}$
and $\widetilde{R}_{\nu}^{*}$, considered on the measure space $(\mathbb{R}_+,x^{\delta}dx)$, have the following mapping
properties:
\begin{itemize}
\item[(a1)] $\widetilde{R}_{\nu}$ is of strong type $(p,p)$ if and only if $p > 1$ and $(2\nu+1)p-1 < \delta < 2p-1$;
\item[(a2)] $\widetilde{R}_{\nu}^{*}$ is of strong type $(p,p)$ if and only if $p > 1$ and $2\nu p -1 < \delta < p-1$;
\item[(b1)] $\widetilde{R}_{\nu}$ is of weak type $(p,p)$ if and only if $(2\nu+1)p-1 < \delta < 2p-1$, with the second inequality
	weakened in case $p=1$, and with the first inequality weakened in case $p=1$ and $\nu \neq -1/2$;
\item[(b2)] $\widetilde{R}_{\nu}^{*}$ is of weak type $(p,p)$ if and only if $2\nu p -1 < \delta < p-1$, with both inequalities
	weakened in case $p=1$;
\item[(c1)] $\widetilde{R}_{\nu}$ is of restricted weak type $(p,p)$ if and only if $(2\nu+1)p-1 \le \delta \le 2p-1$,
	with the first inequality strictened	 in case $\nu=-1/2$;
\item[(c2)] $\widetilde{R}_{\nu}^{*}$ is of restricted weak type $(p,p)$ if and only if $2\nu p -1 \le \delta \le p-1$.
\end{itemize}
Moreover,
$$
\widetilde{R}_{\nu}^{*}\widetilde{R}_{\nu}f = \widetilde{R}_{\nu} \widetilde{R}_{\nu}^{*}f = f,
	\qquad f \in L^p(\mathbb{R}_+,x^{\delta}dx),
$$
provided that $p > 1$ and $(2\nu+1)p -1 < \delta < p-1$.
\end{thm}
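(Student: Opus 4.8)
The plan is to split the proof into the two qualitatively different ranges $\nu<0$ and $0<\nu<1/2$, and in each of them reduce the analysis of $\widetilde R_\nu$ and $\widetilde R_\nu^{*}$ to the classical Riesz--Bessel kernels treated in \cite{BHNV} together with the order--one classical Bessel potential kernel, and then control everything by the special operators collected in Section~\ref{sec:tech}.

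For $\nu<0$ the relations established above, $\widetilde R_\nu=-V_\nu R_{-\nu-1}^{*}V_\nu^{-1}$ and $\widetilde R_\nu^{*}=-V_\nu R_{-\nu-1}V_\nu^{-1}$ with $V_\nu f(x)=x^{-2\nu-1}f(x)$, give the kernel identity $\widetilde R_\nu(x,y)=-(xy)^{-2\nu-1}R_{-\nu-1}^{*}(x,y)$ (and $\widetilde R_\nu^{*}(x,y)=\widetilde R_\nu(y,x)$). Since $V_\nu$ is an isometric isomorphism of the $L^p(x^{\delta}dx)$ scale that merely shifts $\delta$ to $\delta-(2\nu+1)p$, Proposition~\ref{thm:trans} transfers the \emph{strong} type assertions (a1), (a2) verbatim from Theorem~\ref{thm:Rcls}(a2), (a1). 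Weak and restricted weak type, as well as all of necessity, do not transfer this way, so there one argues with the kernel directly: the pointwise upper and lower bounds for $R_{-\nu-1}^{*}$ recorded in \cite{BHNV} yield the corresponding bounds for $\widetilde R_\nu(x,y)$, from which $\widetilde R_\nu$ and $\widetilde R_\nu^{*}$ are dominated, on the regions $y\le x/2$, $x/2<y<2x$, $2x\le y$, by $H_0^{\xi}$, $H_{\infty}^{\xi}$, $\mathcal H_{\nu,\loc}$ and $M^k_{\loc}$ with parameters shifted according to the prefactor $(xy)^{-2\nu-1}$; Lemmas~\ref{lem:H0}, \ref{lem:Hinf}, \ref{lem:MH} give sufficiency, and the necessity counterexamples of \cite{BHNV} (characteristic functions of intervals, and, on the line $p=1$, sequences producing a logarithmic blow-up as in the proof of Lemma~\ref{lem:Hlog}) give the thresholds. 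In the degenerate case $\nu=-1/2$ one has $V_{-1/2}=\id$ and hence $\widetilde R_{-1/2}=-R_{-1/2}^{*}$, so the whole statement reduces literally to Theorem~\ref{thm:Rcls} for $R_{-1/2}^{*}$; this is the origin of the $\nu=-1/2$ refinements in (b1) and (c1).

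For $0<\nu<1/2$ no such clean conjugation exists, and here I would start from \eqref{37}, which in this range (the compensating term being absent) gives the kernel decomposition
\[
\widetilde R_\nu(x,y)=(xy)^{-2\nu}\Big(R_{-\nu}(x,y)-\frac{2\nu}{x}\,\big(B_{-\nu}^{\textrm{cls}}\big)^{-1/2}(x,y)\Big),\qquad -\nu\in(-1/2,0),
\]
and, via $\widetilde R_\nu^{*}(x,y)=\widetilde R_\nu(y,x)$ and symmetry of the potential kernel, the analogous splitting of $\widetilde R_\nu^{*}$ into a conjugated classical Riesz adjoint plus a conjugated classical potential. Feeding in the known pointwise estimates for the classical Riesz kernel $R_{-\nu}$ (from \cite{BHNV}) and for the order--one classical Bessel potential kernel $(B_{-\nu}^{\textrm{cls}})^{-1/2}$ (which carries no logarithmic component for this range of the order), and splitting $(0,\infty)\times(0,\infty)$ into the three usual regions, the multiplier $(xy)^{-2\nu}$ turns these bounds into a pointwise domination of $\widetilde R_\nu$ (resp.\ $\widetilde R_\nu^{*}$) by combinations of $H_0^{\xi}$, $H_{\infty}^{\xi}$, $T_\psi^{\xi}$, $\mathcal H_{\nu,\loc}$ and $M^k_{\loc}$ with parameters shifted by $2\nu$; Lemmas~\ref{lem:H0}, \ref{lem:Hinf}, \ref{lem:Tpsi}, \ref{lem:MH} then produce exactly the ranges $(2\nu+1)p-1<\delta<2p-1$ for $\widetilde R_\nu$ and $2\nu p-1<\delta<p-1$ for $\widetilde R_\nu^{*}$, with the stated endpoint (weak and restricted weak) behaviour. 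For necessity, the lower bounds for $\widetilde R_\nu(x,y)$ obtained from \eqref{bes:est} applied to the two integrals in \eqref{75} show that $\widetilde R_\nu(\chi_{(1,2)})(x)\gtrsim x^{-2}$ for large $x$ and $\gtrsim x^{-2\nu-1}$ for small $x$ (the latter coming from the non--cancelling potential term, and collapsing to $x^{0}$ when $\nu=-1/2$); testing membership of these model functions in $L^p(x^{\delta}dx)$, $L^{p,\infty}(x^{\delta}dx)$ and $L^{p,1}(x^{\delta}dx)$ separates the strong, weak and restricted weak thresholds $(2\nu+1)p-1\le\delta\le 2p-1$, and on the line $p=1$ an extra logarithmic divergence (as in the proof of Lemma~\ref{lem:Hlog}) captures the $\nu\neq-1/2$ refinement. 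Finally, $\widetilde R_\nu^{*}\widetilde R_\nu f=\widetilde R_\nu\widetilde R_\nu^{*}f=f$ on $L^p(x^{\delta}dx)$ follows from the $L^2(d\eta_\nu)$ identities stated above, density of $C_c^{\infty}(\mathbb{R}_+)$ in $L^p(x^{\delta}dx)$, and strong $(p,p)$ boundedness of both $\widetilde R_\nu$ and $\widetilde R_\nu^{*}$ on $L^p(x^{\delta}dx)$ for $p>1$ and $(2\nu+1)p-1<\delta<p-1$ --- the intersection of the ranges in (a1) and (a2), non--empty precisely when $\nu<0$.

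I expect the sufficiency part for $0<\nu<1/2$ to be the main obstacle. Unlike in the classical setting, the term $\frac{2\nu}{x}(B_{-\nu}^{\textrm{cls}})^{-1/2}(x,y)$ does not recombine with $\frac{\partial}{\partial x}W_t^{-\nu}(x,y)$ into a single Riesz--type kernel, so the potential kernel has to be estimated on its own, and one must check that, after multiplication by $(xy)^{-2\nu}$, neither its behaviour near the diagonal nor its behaviour at $0$ and at $\infty$ enlarges the forbidden $\delta$-region beyond what (a1), (a2) predict --- in particular that the genuinely new boundary term contributes only the single endpoint $\delta=(2\nu+1)p-1$. Getting the endpoint statements (b1), (b2), (c1), (c2) sharp, with the $\nu=-1/2$ and $p=1$ refinements, is the second delicate point.
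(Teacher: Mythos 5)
Your overall architecture matches the paper's: decompose $\widetilde R_\nu$ via \eqref{37} into a conjugated classical Riesz part plus a conjugated order--$1/2$ potential part, dominate by $\mathcal H_{\nu,\loc}$, $N^{\log}$, $H_0^{\xi}$, $H_\infty^{\xi}$ and invoke Lemmas \ref{lem:MH}, \ref{lem:NN}, \ref{lem:H0}, \ref{lem:Hinf}; the paper in fact runs this single decomposition uniformly for all $0\neq\nu<1/2$ (the relevant estimate for $K_{-\nu}$ being \cite[Theorem 2.1]{NoSt1}, which does carry a logarithmic singularity \emph{on the diagonal} --- your parenthetical ``no logarithmic component'' is inaccurate, though harmless since $N^{\log}$ absorbs it). Your alternative route for $\nu<0$ via $\widetilde R_\nu=-V_\nu R_{-\nu-1}^{*}V_\nu^{-1}$ is a legitimate shortcut for the strong type statements and for the $\nu=-1/2$ case, and is consistent with the paper's thresholds.

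The genuine gap is in the necessity part, in two places. First, testing the model functions $x^{-2}\chi_{(x>2)}$ and $x^{-2\nu-1}\chi_{(0,1)}$ against $L^{p,\infty}(x^\delta dx)$ only disproves restricted weak type \emph{outside} $[(2\nu+1)p-1,\,2p-1]$; at the endpoints $\delta=2p-1$ and $\delta=(2\nu+1)p-1$ with $p>1$ these functions \emph{belong} to weak $L^p(x^{\delta}dx)$, so your test cannot establish the strictness of the inequalities in (b1)--(b2) there. The paper handles this by splitting $\widetilde R_\nu f$ into a piece that, by sign-definiteness of the kernel in the far off-diagonal region, dominates a dilated Hardy operator $H_{0,b}^{1}$ (resp.\ $H_{\infty,b}^{-2\nu-1}$) in absolute value, plus a remainder that \emph{is} of weak type; the conclusion then follows from the failure of the weak type $(p,p)$ inequality for the Hardy operator itself at the endpoint (the necessity halves of Lemmas \ref{lem:H0}(b), \ref{lem:Hinf}(b), together with Proposition \ref{prop:Hb}). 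None of this is in your proposal. Second, even your model-function lower bounds presuppose that the kernel has a definite sign for $y\le x/b$ and $y\ge bx$: since $\widetilde R_\nu(x,y)$ is a \emph{difference} of the Riesz and potential contributions, cancellation must be ruled out. This is the content of the paper's Lemma \ref{lem:Rlowe}, which computes the limits of $x^{2\mu+2}R_\mu(x,y)$ and $x^{2\mu+1}K_\mu(x,y)$ and checks the signs of the resulting Gamma-quotients (the sign in the region $bx\le y$ actually flips between $\nu<0$ and $0<\nu<1/2$). Writing ``lower bounds obtained from \eqref{bes:est} applied to the two integrals in \eqref{75}'' skips exactly this computation. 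The $p=1$ part of the necessity (failure of strong type $(1,1)$ for every $\delta$) also needs the diagonal principal-value singularity with $f_\epsilon=\chi_{(1,1+\epsilon)}$, which you gesture at but do not carry out.
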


Observe that in Theorem \ref{thm:Rexo} the ranges of $\delta$ shrink as $\nu \to (1/2)^{-}$ and
become empty for $1/2 < \nu < 1$. However, this is not the case of the statement below.

\begin{thm} \label{thm:Rexob}
Let $1/2 \le \nu < 1$, $1 \le p < \infty$, $\delta \in \mathbb{R}$. Then the Riesz operators $\widetilde{R}_{\nu}$
and $\widetilde{R}_{\nu}^{*}$, considered on the measure space $(\mathbb{R}_+,x^{\delta}dx)$, have the following mapping
properties:
\begin{itemize}
\item[(a1)] $\widetilde{R}_{\nu}$ is of strong type $(p,p)$ if and only if $p > 1$ and $(2\nu-1)p-1 < \delta < 2p-1$;
\item[(a2)] $\widetilde{R}_{\nu}^{*}$ is of strong type $(p,p)$ if and only if $p > 1$ and $2\nu p -1 < \delta < 3p-1$;
\item[(b1)] $\widetilde{R}_{\nu}$ is of weak type $(p,p)$ if and only if $(2\nu-1)p-1 < \delta < 2p-1$, with both inequalities
	weakened in case $p=1$ and $\nu \neq 1/2$;
\item[(b2)] $\widetilde{R}_{\nu}^{*}$ is of weak type $(p,p)$ if and only if $2\nu p -1 < \delta < 3p-1$, with the 
	second inequality weakened in case $p=1$, and with the first inequality
	weakened in case $p=1$ and $\nu \neq 1/2$;
\item[(c1)] $\widetilde{R}_{\nu}$ is of restricted weak type $(p,p)$ if and only if $(2\nu-1)p-1 \le \delta \le 2p-1$,
	with both inequalities strictened	 in case $\nu=1/2$;
\item[(c2)] $\widetilde{R}_{\nu}^{*}$ is of restricted weak type $(p,p)$ if and only if $2\nu p -1 \le \delta \le 3p-1$,
	with the first inequality strictened in case $\nu =1/2$.
\end{itemize}
Moreover,
$$
\widetilde{R}_{\nu}^{*}\widetilde{R}_{\nu}f = \widetilde{R}_{\nu} \widetilde{R}_{\nu}^{*}f = f,
	\qquad f \in L^p(\mathbb{R}_+,x^{\delta}dx),
$$
provided that $p > 1$ and $2\nu p -1 < \delta <2p-1$.
\end{thm}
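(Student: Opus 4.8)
The plan is to follow the same two-part scheme used above for $\widetilde{W}_*^\nu$: the positive (``if'') statements will come from a pointwise domination of $\widetilde{R}_\nu$ and $\widetilde{R}_\nu^*$ by the special operators of Section~\ref{sec:tech}, while the negative (``only if'') statements will come from explicit counterexamples built on pointwise lower bounds for the kernels; the composition identities will then follow by density. Throughout, since $1/2\le\nu<1$, the compensating term $\chi_{\{\nu\ge 1/2\}}\frac{2\nu}{x}W_t^{-\nu}(0,y)$ in \eqref{75} is genuinely present, and the analysis hinges on the cancellation it produces.

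For sufficiency I would first derive upper bounds for the kernel $\widetilde{R}_\nu(x,y)$ directly from \eqref{75}. The key point is that $x\mapsto W_t^{-\nu}(x,y)$ is even near $x=0$, so the combination $\frac{2\nu}{x}\bigl(W_t^{-\nu}(0,y)-W_t^{-\nu}(x,y)\bigr)$ carries an extra factor of $x$ for small $x$; combining this with the Bessel asymptotics \eqref{bes:est} and the formulas \eqref{bes:dif}, \eqref{bes:rec} (to handle $\frac{\partial}{\partial x}W_t^{-\nu}$ and $\frac{2\nu}{x}W_t^{-\nu}$) and then carrying out the $dt/\sqrt{t}$ integration, I expect bounds of the usual three-region type: a plain power bound for $y\le x/2$, a principal-value Hilbert-type singularity together with a logarithmic perturbation for $x/2<y<2x$, and a rapidly decaying bound for $2x\le y$. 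This should yield a pointwise control of the shape
\[
\bigl|\widetilde{R}_\nu f(x)\bigr|\lesssim H_0^{\xi_0}|f|(x)+\bigl|\mathcal{H}_{\nu,\loc}f(x)\bigr|+N^{\log}|f|(x)+H_\infty^{\xi_\infty}|f|(x),
\]
with exponents $\xi_0,\xi_\infty$ dictated by the regions, the operators $H_0^{\xi_0}$ and $H_\infty^{\xi_\infty}$ being replaced by their logarithmic analogues $H_0^{1,\log}$ and $H_\infty^{-1,\log}$ exactly in the borderline case $\nu=1/2$ (this is the source of the extra strictenings there). A companion estimate for $\widetilde{R}_\nu^*$, with the roles of $H_0$ and $H_\infty$ exchanged, follows from $\widetilde{R}_\nu^*(x,y)=\widetilde{R}_\nu(y,x)$. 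Feeding these into Lemmas~\ref{lem:H0}, \ref{lem:Hinf}, \ref{lem:Hlog}, \ref{lem:MH}, \ref{lem:NN} (and Proposition~\ref{prop:Hb} to pass to $b$-shifted variants when splitting off the local part) should give all the strong, weak and restricted weak type upper bounds. For part of the range one can instead use \eqref{37}: the summand $x^{-2\nu}R_{-\nu}(y^{2\nu}f)$ is a conjugated classical Riesz transform whose mapping properties transfer verbatim from Theorem~\ref{thm:Rcls} applied with parameter $-\nu$ --- and this already delivers the sharp ranges in (a1)--(c1) --- while the remainder $\frac{2\nu}{x}x^{-2\nu}(B_{-\nu}^{\textrm{cls}})^{-1/2}(y^{2\nu}f)$, a conjugated compensated potential scaled by $1/x$, is handled by estimating its kernel with the same tools.

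For necessity I would extract matching lower bounds for $\widetilde{R}_\nu(x,y)$ in the three regions from \eqref{75} and the Bessel lower estimates, and then reuse the counterexample machinery of Section~\ref{sec:max}. Testing on $f=\chi_{(1,2)}$ should detect both the behaviour $\widetilde{R}_\nu f(x)\gtrsim x^{-2}$ for large $x$ (forcing $\delta\le 2p-1$, with ``$<$'' when $\nu=1/2$ through the logarithmic gain) and the correct power decay of $\widetilde{R}_\nu f(x)$ as $x\to 0^+$ (forcing the lower bound on $\delta$); the failure of weak, and of strong, type at the endpoint $\delta=2p-1$ for $p>1$ follows from the same duality/unbounded-functional argument used for the maximal operator at that endpoint; and the failure of strong type $(1,1)$ for every $\delta$ follows from the local Hilbert-type singularity, exactly as for $\widetilde{W}_*^\nu$ on $L^1$. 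The corresponding statements for $\widetilde{R}_\nu^*$ are obtained by transposing the kernel estimates. Finally, for the composition identities: on the range $p>1$, $2\nu p-1<\delta<2p-1$ both $\widetilde{R}_\nu$ and $\widetilde{R}_\nu^*$ are bounded on $L^p(\mathbb{R}_+,x^\delta dx)$ by parts (a1) and (a2) (this range being precisely the intersection of the two), the identities $\widetilde{R}_\nu^*\widetilde{R}_\nu f=\widetilde{R}_\nu\widetilde{R}_\nu^* f=f$ hold for $f\in C_c^\infty(\mathbb{R}_+)$ by the $L^2(d\eta_\nu)$ theory via the exotic Hankel transform, and $C_c^\infty(\mathbb{R}_+)$ is dense in $L^p(\mathbb{R}_+,x^\delta dx)$, so the identities pass to the limit.

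The hard part will be the kernel analysis itself: keeping the three terms of \eqref{75} together long enough to exploit the cancellation near $x=0$ and near the diagonal, extracting the precise local behaviour (a Hilbert-type singularity together with the correct logarithmic remainder at $\nu=1/2$), and then checking that the control operators produced are lossless, i.e.\ that the endpoint restrictions forced by the upper bounds coincide with those ruled out by the lower bounds. The threshold $\nu=1/2$, where the contribution of the compensating term changes from convergent to logarithmically borderline, is the delicate point and the reason this case is stated separately from Theorem~\ref{thm:Rexo}.
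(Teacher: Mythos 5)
Your plan coincides with the paper's proof: the paper derives exactly the three-region kernel bounds you describe (Proposition \ref{pro:restl12} for $1/2<\nu<1$ and Proposition \ref{prop:keru12} for $\nu=1/2$, where the cancellation from the compensating term produces the off-diagonal gain and the logarithms), controls $\widetilde{R}_{\nu}$ and $\widetilde{R}_{\nu}^{*}$ by $\mathcal{H}_{\nu,\loc}$, $N^{\log}$ and the appropriate Hardy operators (logarithmic variants at $\nu=1/2$), and obtains necessity from the matching lower bounds in Lemma \ref{lem:Rlowe}(c),(d) together with the necessity parts of Lemmas \ref{lem:H0}, \ref{lem:Hinf} and \ref{lem:Hlog}. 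The only cosmetic inaccuracy is that at $\nu=1/2$ the logarithm enters only $H_0^{1,\log}$ for $\widetilde{R}_{1/2}$ and only $H_{\infty}^{-1,\log}$ for $\widetilde{R}_{1/2}^{*}$ (not both for each operator), which does not affect the argument.
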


\begin{figure}
\centering

\begin{tikzpicture}[scale=1.5]

\draw[arrows=-angle 60] (0,0) -- (0,2.5);
\draw[arrows=-angle 60] (0,0) -- (8,0);

\node at (0.2,2.3) {$p$};
\node at (7.8,0.2) {$\delta$};

\draw[very thin] (2.1,-0.05) -- (2.1,0.05);
\node at (2.1, -0.2) {$-2$};
\draw[very thin] (2.8,-0.05) -- (2.8,0.05);
\node at (2.8, -0.2) {$-1$};
\draw[very thin] (4.2,-0.05) -- (4.2,0.05);
\node at (4.3, -0.2) {$2\nu+2$};
\draw[very thin] (3.5,-0.05) -- (3.5,0.05);
\node at (3.5,-0.2) {$2\nu+1$};

\draw[very thin, color=black] (-0.05,0.7) -- (0.05,0.7);
\node at (-0.2,0.7) {$1$};

\draw[very thin] (-0.05,1.4) -- (0.05,1.4);
\node at (-0.2,1.4) {$2$};

\fill[black!8!white] (1.1,1.7) -- (2.1,0.68) -- (3.5,0.68) -- (4.5,1.7);
\fill[pattern=my dots]  (2.8,1.7) -- (2.8,0.7) -- (4.2,0.7) -- (6.2,1.7);

\draw[thick, dash dot] (4.2, 0.7) -- (6.2,1.7);

\draw[thick, dash dot] (2.1,0.68) -- (1.1,1.7);
\draw[thick, dash dot] (3.5,0.68) -- (4.5,1.7);

\draw[thin, dashed] (2.8, 0.7) -- (2.8,1.7);
\draw[thin] (2.8,0.7) -- (4.2,0.7);
\draw[thin] (2.1,0.68) -- (3.5,0.68);

\filldraw[fill=black] (4.2,0.7) circle(1pt);
\filldraw[fill=white] (2.8,0.7) circle(1pt);

\filldraw[fill=black] (2.1,0.68) circle(1pt);
\filldraw[fill=black] (3.5,0.68) circle(1pt);

\node[rotate=-45] at (1.46, 1.06) {\scriptsize r.w.t. $(p,p)$};

\node[rotate=45] at (4.21, 1.18) {\scriptsize r.w.t. $(p,p)$};

\node[rotate=26.57] at (5.28, 1.07) {\scriptsize r.w.t. $(p,p)$};

\node at (7,2.2) {\textbf{CLS, $\nu>-1$}};
 \end{tikzpicture} 

\centering

\begin{tikzpicture}[scale=1.5]

\draw[arrows=-angle 60] (0,0) -- (0,2.5);
\draw[arrows=-angle 60] (0,0) -- (8,0);

\node at (0.2,2.3) {$p$};
\node at (7.8,0.2) {$\delta$};

\draw[very thin] (1.75,-0.05) -- (1.75,0.05);
\node at (1.75, -0.2) {$2\nu-1$};
\draw[very thin] (2.45,-0.05) -- (2.45,0.05);
\node at (2.45,-0.2) {$2\nu$};
\draw[very thin] (3.5,-0.05) -- (3.5,0.05);
\node at (3.5,-0.2) {$0$};
\draw[very thin] (4.2,-0.05) -- (4.2,0.05);
\node at (4.2,-0.2) {$1$};

\draw[very thin, color=black] (-0.05,0.7) -- (0.05,0.7);
\node at (-0.2,0.7) {$1$};

\draw[very thin] (-0.05,1.4) -- (0.05,1.4);
\node at (-0.2,1.4) {$2$};

\fill[black!8!white] (1.95,1.7) -- (2.45,0.68) -- (4.2,0.68) -- (6.3,1.7);
\fill[pattern=my dots]  (0.25,1.7) -- (1.75,0.7) -- (3.5,0.7) -- (4.5,1.7);

\draw[thick, dash dot] (1.75, 0.7) -- (0.25,1.7);
\draw[thick, dash dot] (3.5,0.7) -- (4.5, 1.7);
\draw[thick, dash dot] (2.45,0.68) -- (1.95,1.7);
\draw[thick, dash dot] (4.2,0.68) -- (6.3,1.7);

\draw[thin] (1.75,0.7) -- (3.5,0.7);
\draw[thin] (2.45,0.68) -- (4.2,0.68);

\filldraw[fill=black] (1.75,0.7) circle(1pt);
\filldraw[fill=black] (3.5,0.7) circle(1pt);

\filldraw[fill=black] (2.45,0.68) circle(1pt);
\filldraw[fill=black] (4.2,0.68) circle(1pt);

\node[rotate=-63.43] at (2.01, 1.17) {\scriptsize r.w.t. $(p,p)$};
\node[rotate=-33.69] at (0.894452,1.0311679) {\scriptsize r.w.t. $(p,p)$};
\node at (3.325,0.5) {\scriptsize weak $(1,1)$};
\node[rotate=45] at (4.17, 1.13) {\scriptsize r.w.t. $(p,p)$};
\node[rotate=26.565] at (5.3, 1.01) {\scriptsize r.w.t. $(p,p)$};
\node at (7,2.2) {\textbf{EXO, $\nu<-1/2$}};
 \end{tikzpicture}

\begin{tikzpicture}[scale=1.5]

\draw[arrows=-angle 60] (0,0) -- (0,2.5);
\draw[arrows=-angle 60] (0,0) -- (8,0);

\node at (0.2,2.3) {$p$};
\node at (7.8,0.2) {$\delta$};

\draw[very thin] (2.1,-0.05) -- (2.1,0.05);
\node at (2.1,-0.2) {$2\nu-1$};

\draw[very thin] (2.8,-0.05) -- (2.8,0.05);
\node at (2.8,-0.2) {$2\nu$};

\draw[very thin] (3.5,-0.05) -- (3.5,0.05);
\node at (3.5,-0.2) {$0$};

\draw[very thin] (4.2,-0.05) -- (4.2,0.05);
\node at (4.2,-0.2) {$1$};

\draw[very thin, color=black] (-0.05,0.7) -- (0.05,0.7);
\node at (-0.2,0.7) {$1$};

\draw[very thin] (-0.05,1.4) -- (0.05,1.4);
\node at (-0.2,1.4) {$2$};

\fill[black!8!white]  (2.8,1.7) -- (2.8,0.7) -- (4.2,0.7) -- (6.3,1.7);
 \fill[pattern=my dots](1.1,1.7) -- (2.1,0.7) -- (3.5,0.7) -- (4.5,1.7); 

\draw[thick, dash dot] (2.1,0.7) -- (1.1,1.7);
\draw[thick, dash dot] (3.5,0.7) -- (4.5,1.7);
\draw[thick, dash dot] (4.2,0.68) -- (6.3,1.7);

\draw[ thin] (2.1,0.7) -- (3.5,0.7);
\draw[thin] (2.8,0.68) -- (4.2,0.68);

\draw[thin, dashed] (2.8,0.7) -- (2.8,1.7);

\filldraw[fill=black] (2.1,0.7) circle(1pt);
\filldraw[fill=white] (2.8,0.68) circle(1pt);
\filldraw[fill=black] (3.5,0.7) circle(1pt);
\filldraw[fill=black] (4.2,0.68) circle(1pt);

\node[rotate=-45] at (1.43, 1.13) {\scriptsize r.w.t. $(p,p)$};
\node at (3.15,0.5) {\scriptsize weak $(1,1)$};
\node[rotate=45] at (4.17, 1.13) {\scriptsize r.w.t. $(p,p)$};
\node[rotate=26.565] at (5.3, 1.01) {\scriptsize r.w.t. $(p,p)$};
\node at (7,2.2) {\textbf{EXO, $\nu=-1/2$}};
 \end{tikzpicture} 

\hspace*{1em}
\begin{tikzpicture}[scale=1.5]

\draw[arrows=-angle 60] (0,0) -- (0,2.5);
\draw[arrows=-angle 60] (0,0) -- (8,0);

\node at (0.2,2.3) {$p$};
\node at (7.8,0.2) {$\delta$};

\draw[very thin] (2.45,-0.05) -- (2.45,0.05);
\node at (2.45, -0.2) {$2\nu-1$};
\draw[very thin] (3.15,-0.05) -- (3.15,0.05);
\node at (3.15,-0.2) {$2\nu$};
\draw[very thin] (3.5,-0.05) -- (3.5,0.05);
\node at (3.5,-0.2) {$0$};
\draw[very thin] (4.2,-0.05) -- (4.2,0.05);
\node at (4.2,-0.2) {$1$};

\draw[very thin, color=black] (-0.05,0.7) -- (0.05,0.7);
\node at (-0.2,0.7) {$1$};

\draw[very thin] (-0.05,1.4) -- (0.05,1.4);
\node at (-0.2,1.4) {$2$};

 \fill[black!8!white](3.65,1.7) -- (3.15,0.68) -- (4.2,0.68) -- (6.3,1.7);
 \fill[pattern=my dots] (1.95,1.7) -- (2.45,0.7) -- (3.5,0.7) -- (4.5,1.7);

\draw[thick, dash dot] (2.45, 0.7) -- (1.95,1.7);
\draw[thick, dash dot] (3.15,0.68) -- (3.65, 1.7);
\draw[thick, dash dot] (3.5,0.7) -- (4.5,1.7);
\draw[thick, dash dot] (4.2,0.68) -- (6.3,1.7);

\draw[thin] (2.45,0.7) -- (3.5,0.7);
\draw[thin] (3.15,0.68) -- (4.2,0.68);

\filldraw[fill=black] (2.45,0.7) circle(1pt);
\filldraw[fill=black] (3.5,0.7) circle(1pt);
\filldraw[fill=black] (3.15,0.68) circle(1pt);
\filldraw[fill=black] (4.2,0.68) circle(1pt);

\node[rotate=-63.43] at (2.02, 1.15) {\scriptsize r.w.t. $(p,p)$};
\node[rotate=63.43] at (3.23, 1.24) {\scriptsize r.w.t. $(p,p)$};
\node at (3.325,0.5) {\scriptsize weak $(1,1)$};
\node[rotate=45] at (4.17, 1.13) {\scriptsize r.w.t. $(p,p)$};
\node[rotate=26.565] at (5.3, 1.01) {\scriptsize r.w.t. $(p,p)$};
\node at (7,2.2) {\textbf{EXO, $-1/2<\nu<0$}};
 \end{tikzpicture} 

\hspace*{1em}
\begin{tikzpicture}[scale=1.5]

\draw[arrows=-angle 60] (0,0) -- (0,2.5);
\draw[arrows=-angle 60] (0,0) -- (8,0);

\node at (0.2,2.3) {$p$};
\node at (7.8,0.2) {$\delta$};

\draw[very thin] (3.15,-0.05) -- (3.15,0.05);
\node at (3, -0.2) {$2\nu-1$};

\draw[very thin] (3.5,-0.05) -- (3.5,0.05);
\node at (3.5,0.2) {$0$};

\draw[very thin] (3.85,-0.05) -- (3.85,0.05);
\node at (3.85,-0.2) {$2\nu$};

\draw[very thin] (4.2,-0.05) -- (4.2,0.05);
\node at (4.2,-0.2) {$1$};

\draw[very thin, color=black] (-0.05,0.7) -- (0.05,0.7);
\node at (-0.2,0.7) {$1$};

\draw[very thin] (-0.05,1.4) -- (0.05,1.4);
\node at (-0.2,1.4) {$2$};

\fill[black!8!white] (5.35,1.7) -- (3.85,0.7) -- (4.2,0.7) -- (6.3,1.7);
\fill[pattern=my dots](3.65,1.7) -- (3.15,0.7) -- (3.5,0.7) -- (4.5,1.7); 

\draw[thick, dash dot] (3.15,0.7) -- (3.65,1.7);
\draw[thick, dash dot] (3.5,0.7) -- (4.5,1.7);
\draw[thick, dash dot] (3.85,0.7) -- (5.35, 1.7);
\draw[thick, dash dot] (4.2,0.7) -- (6.3,1.7);

\draw[thin] (3.15,0.7) -- (3.5,0.7);
\draw[thin] (3.85,0.7) -- (4.2,0.7);

\filldraw[fill=black] (3.15,0.7) circle(1pt);
\filldraw[fill=black] (3.5,0.7) circle(1pt);
\filldraw[fill=black] (3.85,0.7) circle(1pt);
\filldraw[fill=black] (4.2,0.7) circle(1pt);

\node[rotate=63.43] at (3.23, 1.24) {\scriptsize r.w.t. $(p,p)$};
\node at (3.675,0.5) {\scriptsize weak $(1,1)$};

\node[rotate=42] at (4.25, 1.2) {\scriptsize r.w.t. $(p,p)$};

\node[rotate=26.565] at (5.3, 1.01) {\scriptsize r.w.t. $(p,p)$};
\node at (7,2.2) {\textbf{EXO, $0<\nu<1/2$}};
 \end{tikzpicture}

\caption{Visualization of Theorems \ref{thm:Rcls} and \ref{thm:Rexo}, with the choices
$\nu =-1/2$, $\nu=-3/4$, $\nu=-1/2$, $\nu=-1/4$ and $\nu=1/4$, respectively.
The gray regions correspond to $\widetilde{R}_{\nu}$, while the dotted to $\widetilde{R}_{\nu}^*$.
Note the phase shifts occurring at $\nu = -1/2$ and $\nu = 0$ in the exotic case.
}

\label{fig_riesz_set1}

\end{figure}
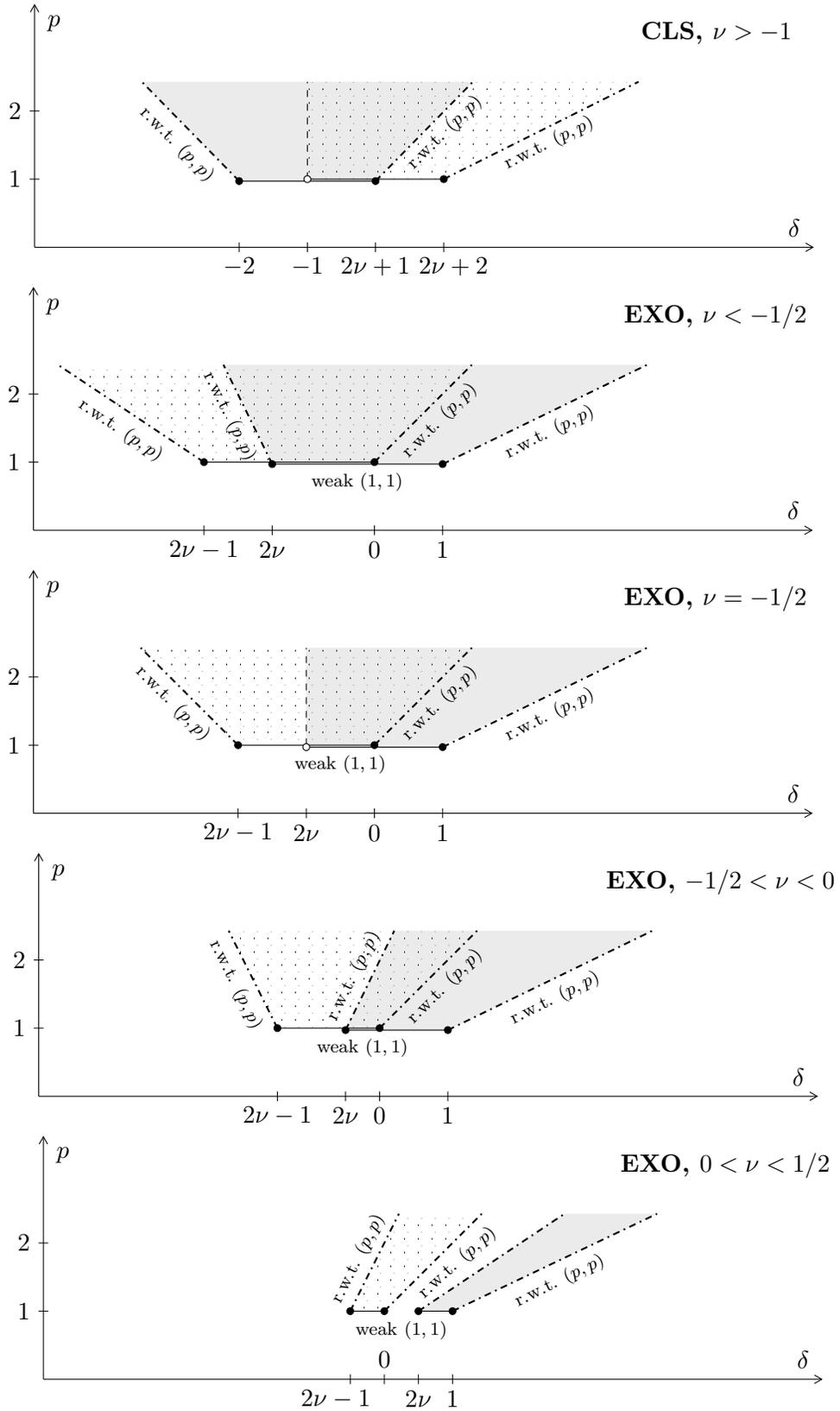

\begin{figure}
\centering 

\begin{tikzpicture}[scale=1.5]

\draw[arrows=-angle 60] (0,0) -- (0,2.5);
\draw[arrows=-angle 60] (0,0) -- (8,0);

\node at (0.2,2.3) {$p$};
\node at (7.8,0.2) {$\delta$};

\draw[very thin] (2.1,-0.05) -- (2.1,0.05);
\node at (2.1, -0.2) {$-2$};
\draw[very thin] (2.8,-0.05) -- (2.8,0.05);
\node at (2.8, -0.2) {$-1$};
\draw[very thin] (4.2,-0.05) -- (4.2,0.05);
\node at (4.3, -0.2) {$2\nu+2$};
\draw[very thin] (3.5,-0.05) -- (3.5,0.05);
\node at (3.5,-0.2) {$2\nu+1$};

\draw[very thin, color=black] (-0.05,0.7) -- (0.05,0.7);
\node at (-0.2,0.7) {$1$};

\draw[very thin] (-0.05,1.4) -- (0.05,1.4);
\node at (-0.2,1.4) {$2$};

\fill[black!8!white]  (1.1,1.7) -- (2.1,0.68) -- (3.5,0.68) -- (4.5,1.7);
 \fill[pattern=my dots](2.8,1.7) -- (2.8,0.7) -- (4.2,0.7) -- (6.2,1.7);

\draw[thick, dash dot] (4.2, 0.7) -- (6.2,1.7);

\draw[thick, dash dot] (2.1,0.68) -- (1.1,1.7);
\draw[thick, dash dot] (3.5,0.68) -- (4.5,1.7);

\draw[thin, dashed] (2.8, 0.7) -- (2.8,1.7);

\draw[thin] (2.8,0.7) -- (4.2,0.7);
\draw[thin] (2.1,0.68) -- (3.5,0.68);

\filldraw[fill=black] (4.2,0.7) circle(1pt);
\filldraw[fill=white] (2.8,0.7) circle(1pt);

\filldraw[fill=black] (2.1,0.68) circle(1pt);
\filldraw[fill=black] (3.5,0.68) circle(1pt);

\node[rotate=-45] at (1.46, 1.06) {\scriptsize r.w.t. $(p,p)$};

\node[rotate=45] at (4.21, 1.18) {\scriptsize r.w.t. $(p,p)$};

\node[rotate=26.57] at (5.28, 1.07) {\scriptsize r.w.t. $(p,p)$};

\node at (7,2.2) {\textbf{CLS, $\nu>-1$}};
 \end{tikzpicture} 

\begin{tikzpicture}[scale=1.5]

\draw[arrows=-angle 60] (0,0) -- (0,2.5);
\draw[arrows=-angle 60] (0,0) -- (8,0);

\node at (0.2,2.3) {$p$};
\node at (7.8,0.2) {$\delta$};

\draw[very thin] (2.8,-0.05) -- (2.8,0.05);
\node at (2.7,-0.2) {$2\nu-2$};

\draw[very thin] (3.5,-0.05) -- (3.5,0.05);
\node at (3.6,-0.2) {$2\nu-1$};

\draw[very thin] (4.2,-0.05) -- (4.2,0.05);
\node at (4.2,-0.2) {$1$};

\draw[very thin] (4.9,-0.05) -- (4.9,0.05);
\node at (4.9,-0.2) {$2$};

\draw[very thin, color=black] (-0.05,0.7) -- (0.05,0.7);
\node at (-0.2,0.7) {$1$};

\draw[very thin] (-0.05,1.4) -- (0.05,1.4);
\node at (-0.2,1.4) {$2$};

\fill[black!8!white]  (2.8,1.7) -- (2.8,0.68) -- (4.2,0.68) -- (6.2,1.7);
\fill[pattern=my dots] (4.5,1.7) -- (3.5,0.7) -- (4.9,0.7) -- (7.9,1.7); 

\draw[thin, dashed] (2.8,0.68) -- (2.8,1.7);
\draw[thin, dashed] (4.2,0.68) -- (6.2,1.7);

\draw[thin, dashed] (3.5,0.7) -- (4.5, 1.7);
\draw[thick, dash dot] (4.9,0.7) -- (7.9,1.7);

\draw[thin] (3.5,0.7) -- (4.9,0.7);
\draw[thin] (2.8,0.68) -- (4.2,0.68);

\filldraw[fill=white] (3.5,0.7) circle(1pt);
\filldraw[fill=black] (4.9,0.7) circle(1pt);
\filldraw[fill=white] (2.8,0.68) circle(1pt);
\filldraw[fill=white] (4.2,0.68) circle(1pt);

\node at (3.85,0.5) {\scriptsize weak $(1,1)$};
\node[rotate=18.43] at (6.45, 1.06) {\scriptsize r.w.t. $(p,p)$};
\node at (7,2.2) {\textbf{EXO, $\nu=1/2$}};
 \end{tikzpicture} 

\begin{tikzpicture}[scale=1.5]

\draw[arrows=-angle 60] (0,0) -- (0,2.5);
\draw[arrows=-angle 60] (0,0) -- (8,0);

\node at (0.2,2.3) {$p$};
\node at (7.8,0.2) {$\delta$};

\draw[very thin] (3.15,-0.05) -- (3.15,0.05);
\node at (3.15,-0.2) {$2\nu-2$};

\draw[very thin] (4.2,-0.05) -- (4.2,0.05);
\node at (4.2,-0.2) {$1$};

\draw[very thin] (3.85,-0.05) -- (3.85,0.05);
\node at (3.85,0.2) {$2\nu-1$};

\draw[very thin] (4.9,-0.05) -- (4.9,0.05);
\node at (4.9,-0.2) {$2$};

\draw[very thin, color=black] (-0.05,0.7) -- (0.05,0.7);
\node at (-0.2,0.7) {$1$};

\draw[very thin] (-0.05,1.4) -- (0.05,1.4);
\node at (-0.2,1.4) {$2$};

\fill[black!8!white] (3.65,1.7) -- (3.15,0.68) -- (4.2,0.68) -- (6.2,1.7);
\fill[pattern=my dots] (5.35,1.7) -- (3.85,0.7) -- (4.9,0.7) -- (7.9,1.7); 

\draw[thick, dash dot] (3.15,0.68) -- (3.65,1.7);
\draw[thick, dash dot] (4.2,0.68) -- (6.2,1.7);

\draw[thick, dash dot] (3.85,0.7) -- (5.35, 1.7);
\draw[thick, dash dot] (4.9,0.7) -- (7.9,1.7);

\draw[thin] (3.85,0.7) -- (4.9,0.7);
\draw[thin] (3.15,0.68) -- (4.2,0.68);

\filldraw[fill=black] (3.85,0.7) circle(1pt);
\filldraw[fill=black] (4.9,0.7) circle(1pt);

\filldraw[fill=black] (3.15,0.68) circle(1pt);
\filldraw[fill=black] (4.2,0.68) circle(1pt);

\node[rotate=63.43] at (3.22,1.28) {\scriptsize r.w.t. $(p,p)$};

\node[rotate=26.57] at (5.27, 1.06) {\scriptsize r.w.t. $(p,p)$};

\node[rotate=18.43] at (6.45, 1.06) {\scriptsize r.w.t. $(p,p)$};

\node[rotate=37] at (4.32,1.23) {\scriptsize r.w.t. $(p,p)$};

\node at (4.025,0.5) {\scriptsize weak $(1,1)$};
\node at (7,2.2) {\textbf{EXO, $\nu>1/2$}};
\end{tikzpicture} 

\caption{Visualization of Theorems \ref{thm:Rcls} and  \ref{thm:Rexob}, with the choices
$\nu =-1/2$, $\nu=1/2$ and $\nu=3/4$, respectively.
The gray regions correspond to $\widetilde{R}_{\nu}$, while the dotted to $\widetilde{R}_{\nu}^*$.
Note the phase shift occurring at $\nu = 1/2$ in the exotic case.}

\label{fig_riesz_set2}

\end{figure}
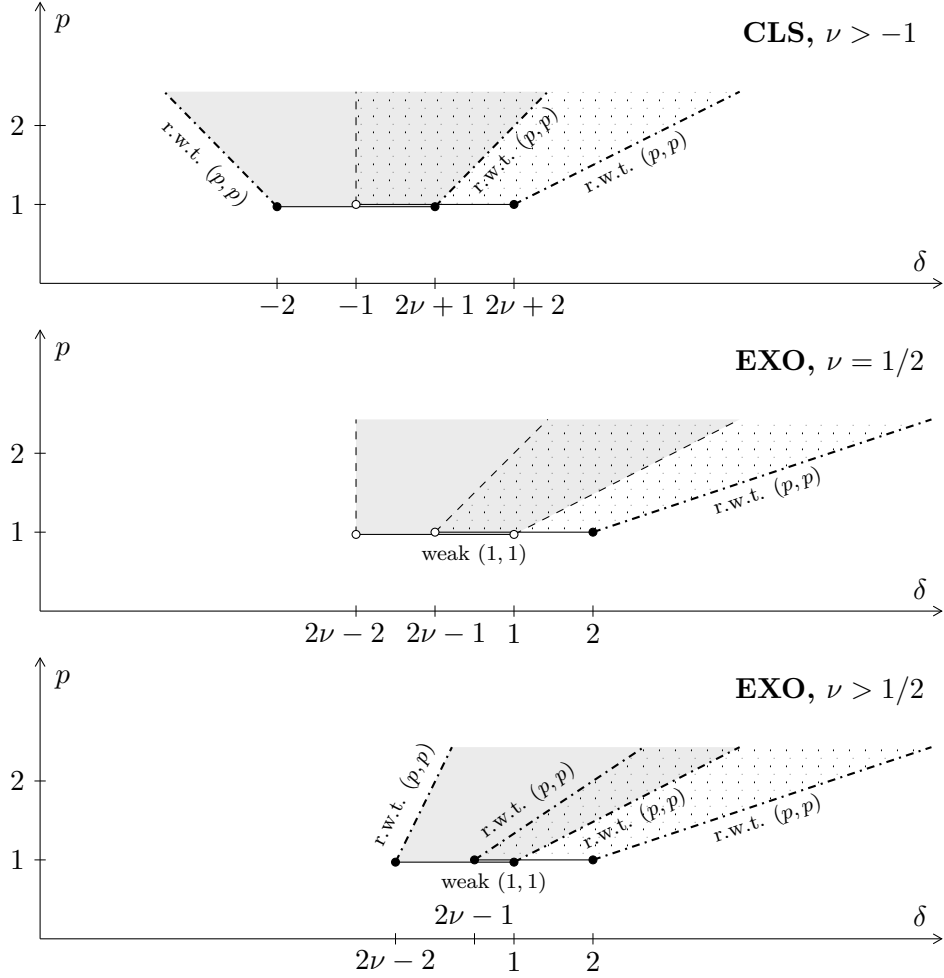

The ranges of $\delta$ in Theorem \ref{thm:Rexob} are wider than in Theorem \ref{thm:Rexo}. This is thanks to the compensation
in the definition of the potential. On the other hand, we would not be able to define the Riesz operators without that
compensation for some parameters. Note that introducing the compensation also for $\nu < 1/2$ would give wider ranges in
Theorem \ref{thm:Rexo}, actually the same as in Theorem \ref{thm:Rexob}.
This phenomenon is absent in the classical situation, where extending the compensation in the potential operator
to $\nu > 1/2$ would not affect the Riesz transforms.

Let $\delta = 2\nu+1$, so that $x^{\delta}dx = d\eta_{\nu}$ is the natural weight.
Then, in view of Theorems \ref{thm:Rexo} and \ref{thm:Rexob}, there are always some $p$ for which
$\widetilde{R}_{\nu}$ is bounded on $L^p(d\eta_{\nu})$, and the same is true about $\widetilde{R}_{\nu}^*$.
Moreover, when $\nu \le -1/2$ the behavior of $\widetilde{R}_{\nu}$ and $\widetilde{R}_{\nu}^*$ is standard,
both the operators are bounded on $L^p(d\eta_{\nu})$, $1< p < \infty$, and from $L^1(d\eta_{\nu})$ to weak
$L^1(d\eta_{\nu})$. On the other hand, when $-1/2 < \nu < 1$, $\nu \neq 0$, the ranges of $p$ for
$L^p(d\eta_{\nu})$-boundedness are restricted. More precisely, $\widetilde{R}_{\nu}$ is bounded on $L^p(d\eta_{\nu})$
if and only if $\nu+1 < p < 1 + \frac{1}{2\nu+1}$ in case $-1/2 < \nu < 1/2$ or $\nu+1 < p < 1+\frac{3}{2\nu-1}$
in case $1/2 \le \nu < 1$. Here the lower bound $\nu + 1 < p$ is meaningful only for $\nu > 0$, and
$\frac{3}{2\nu-1}$ is understood as $\infty$ for $\nu = 1/2$.
Similarly, $\widetilde{R}_{\nu}^*$ is bounded on $L^p(d\eta_{\nu})$ if and only if $2\nu+2 < p < 1+\frac{1}{\max(\nu,0)}$
in case $-1/2 < \nu < 1/2$ or $\frac{2\nu+2}{3} < p < 1 + \frac{1}{\nu}$ in case $1/2 \le \nu < 1$. Here
the upper bound $p < 1+\frac{1}{\max(\nu,0)}$ is understood to be $\infty$ for $\nu < 0$, so it is meaningful only for
$\nu > 0$.

To prove Theorems \ref{thm:Rexo} and \ref{thm:Rexob} we shall need several auxiliary results gathered below.

\begin{pro} \label{prop:kere12}
The kernel $\widetilde{R}_{\nu}(x,y)$ for $\nu=1/2$ has the explicit form
$$
\widetilde{R}_{1/2}(x,y) = \frac{1}{\pi xy}\bigg( \frac{1}{y-x} - \frac{1}{y+x} - \frac{1}{x} \log\frac{y^2}{|x-y|(x+y)}\bigg),
	\qquad x \neq y.
$$
\end{pro}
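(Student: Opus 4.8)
The starting point is formula \eqref{75} specialized to $\nu=1/2$: here $-\nu=-1/2$, $2\nu=1$, and the indicator $\chi_{\{\nu\ge 1/2\}}$ equals $1$, so
$$
\widetilde{R}_{1/2}(x,y) = \frac{1}{xy}\cdot\frac{1}{\sqrt{\pi}}\int_0^{\infty}\Bigl(\frac{\partial}{\partial x}W_t^{-1/2}(x,y) - \frac1x W_t^{-1/2}(x,y) + \frac1x W_t^{-1/2}(0,y)\Bigr)\,\frac{dt}{\sqrt{t}}.
$$
The first move is to make $W_t^{-1/2}$ fully explicit. Feeding the elementary identity \eqref{bes:el} for $I_{-1/2}$ into the definition of $W_t^{\nu}(x,y)$ and absorbing the Gaussian factors, one gets
$$
W_t^{-1/2}(x,y) = \frac{1}{2\sqrt{\pi t}}\bigl(e^{-(x-y)^2/4t} + e^{-(x+y)^2/4t}\bigr), \qquad W_t^{-1/2}(0,y) = \frac{1}{\sqrt{\pi t}}\,e^{-y^2/4t},
$$
and differentiating in $x$ is then immediate. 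I would rewrite the integrand bracket as
$$
\frac{\partial}{\partial x}W_t^{-1/2}(x,y) + \frac1x\bigl(W_t^{-1/2}(0,y) - W_t^{-1/2}(x,y)\bigr),
$$
note that for fixed $x\neq y$, $x,y>0$ this combination is $O(t^{-3/2})$ as $t\to\infty$ and vanishes exponentially as $t\to0^{+}$, so the $t$-integral is absolutely convergent, and split it into these two summands --- keeping, crucially, the two halves of the second summand together, since each one separately would give a logarithmically divergent $t$-integral at infinity.

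For the first piece, after inserting the explicit $x$-derivative one is reduced to integrals of the form $\int_0^{\infty}t^{-2}e^{-a/t}\,dt = 1/a$ (the substitution $s=1/t$ turns this into $\Gamma(1)$) with $a=(x-y)^2/4$ and $a=(x+y)^2/4$; collecting the constants produces exactly $\frac{1}{\pi xy}\bigl(\frac{1}{y-x}-\frac{1}{y+x}\bigr)$. For the second piece I would use
$$
W_t^{-1/2}(0,y) - W_t^{-1/2}(x,y) = \frac{1}{2\sqrt{\pi t}}\Bigl[\bigl(e^{-y^2/4t}-e^{-(x-y)^2/4t}\bigr)+\bigl(e^{-y^2/4t}-e^{-(x+y)^2/4t}\bigr)\Bigr]
$$
and the substitution $s=1/t$, which turns each term into a Frullani integral $\int_0^{\infty}s^{-1}(e^{-\alpha s}-e^{-\beta s})\,ds = \log(\beta/\alpha)$. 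This gives $\tfrac12\log\frac{(x-y)^2}{y^2}+\tfrac12\log\frac{(x+y)^2}{y^2}=\log\frac{|x-y|(x+y)}{y^2}$, hence a contribution $-\frac{1}{\pi x^2 y}\log\frac{y^2}{|x-y|(x+y)}$. Adding the two pieces and factoring out $\frac{1}{\pi xy}$ yields the asserted formula.

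The only genuine subtlety --- and the step I would be most careful about --- is the convergence and splitting bookkeeping: neither $\int_0^{\infty}W_t^{-1/2}(x,y)\,dt/\sqrt t$ nor $\int_0^{\infty}W_t^{-1/2}(0,y)\,dt/\sqrt t$ converges on its own, so one must confirm that the compensating term present in \eqref{75} is precisely what renders the bracket integrable, and only afterwards split the (now absolutely convergent) integral into the rational part and the two Frullani pieces. Everything else is a short elementary computation.
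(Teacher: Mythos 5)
Your proposal is correct and follows essentially the same route as the paper: the paper likewise specializes \eqref{75} to $\nu=1/2$, writes $\widetilde{R}_{1/2}(x,y)=(xy)^{-1}\big(R_{-1/2}(x,y)-\tfrac1x K_{-1/2}(x,y)\big)$ with $W_t^{-1/2}$ made explicit via \eqref{bes:el}, quotes $R_{-1/2}(x,y)=\tfrac1\pi(\tfrac1{y-x}-\tfrac1{y+x})$ for your first piece, and evaluates the compensated piece $K_{-1/2}$ by a regularized limit $\lim_N\int_0^N$ that telescopes incomplete exponential integrals --- which is exactly your Frullani computation in a different guise. Your convergence bookkeeping (the bracket is integrable only because of the compensating term, and the split into two separately convergent Frullani pieces) matches the paper's treatment.
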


\begin{proof}
First, we compute the integral kernel of the classical compensated potential $(B_{-1/2}^{\textrm{cls}})^{-1/2}$,
call it $K_{-1/2}(x,y)$. We have
$$
K_{-1/2}(x,y) = \frac{1}{\sqrt{\pi}} \int_0^{\infty} \Big( W_t^{-1/2}(x,y) - W_t^{-1/2}(0,y) \Big) \, \frac{dt}{\sqrt{t}}.
$$ 
Using \eqref{bes:el} we see that
$$
W_{t}^{-1/2}(x,y) = \frac{1}{\sqrt{4 \pi t}}\bigg( e^{-\frac{(x-y)^2}{4t}} + e^{-\frac{(x+y)^2}{4t}}\bigg)
$$
and, consequently,
$$
K_{-1/2}(x,y) = \frac{1}{2\pi} \int_0^{\infty} \bigg( e^{-\frac{(x-y)^2}{4t}} + e^{-\frac{(x+y)^2}{4t}}
	- 2 e^{-\frac{y^2}{4t}}\bigg)\, \frac{dt}t.
$$
The last integral can be written as a limit $\lim_{N\to \infty} \int_0^N \ldots$, and then simple changes of variables
lead to
\begin{align*}
K_{-1/2}(x,y) & = \lim_{N\to \infty}\frac{1}{2\pi} \bigg\{ \int_{(x-y)^2/4N}^{\infty} + \int_{(x+y)^2/4N}^{\infty}
		- 2\int_{y^2/4N}^{\infty} \bigg\} \, e^{-t}\, \frac{dt}{t} \\
	& = \lim_{N\to \infty}\frac{1}{2\pi} \bigg\{ \int_{(x-y)^2/4N}^{y^2/4N} - \int_{y^2/4N}^{(x+y)^2/4N}\bigg\}
	\, e^{-t}\, \frac{dt}{t}.
\end{align*}
On the other hand, given $A>0$ fixed,
$$
\lim_{s\to 0^+} \int_s^{As} e^{-t} \, \frac{dt}{t} = \log A + \lim_{s\to 0^+} \int_s^{As} \frac{e^{-t}-1}{t}\, dt = \log A.
$$
Combining the above formulas one gets
$$
K_{-1/2}(x,y) = \frac{1}{2\pi} \bigg( \log\frac{y^2}{(x-y)^2} - \log\frac{(x+y)^2}{y^2} \bigg)
	= \frac{1}{\pi} \log\frac{y^2}{|x-y|(x+y)}.
$$

Next, we observe that, see \eqref{75},
$$
\widetilde{R}_{1/2}(x,y) = (xy)^{-1} \Big( R_{-1/2}(x,y) - \frac{1}x K_{-1/2}(x,y)\Big).
$$
This, the formula for $K_{-1/2}(x,y)$ above and the formula (see \cite[(4.3)]{BHNV})
$$
R_{-1/2}(x,y) = \frac{1}{\pi} \bigg( \frac{1}{y-x} - \frac{1}{y+x} \bigg)
$$
give the desired conclusion.
\end{proof}

\begin{pro} \label{prop:keru12}
The kernel $\widetilde{R}_{\nu}(x,y)$ for $\nu = 1/2$ satisfies, for $x,y > 0$, $x \neq y$,
$$
\widetilde{R}_{1/2}(x,y) = \frac{1}{\pi} \frac{(xy)^{-1}}{y-x} + \mathcal{O}\bigg( \frac{1}{y^3}
	\Big( 1 + \log\frac{xy}{(y-x)^2}\Big) \bigg), \qquad x/2 < y < 2x.
$$
Moreover, in the off-diagonal region,
$$
\big|\widetilde{R}_{1/2}(x,y)\big| \lesssim
	\begin{cases}
		x^{-2} y^{-1} \log\frac{x}{y}, & y \le x/2, \\
		y^{-3}, & 2x \le y.
	\end{cases}
$$
\end{pro}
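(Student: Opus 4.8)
The plan is to derive all three estimates from the closed form of the kernel provided by Proposition \ref{prop:kere12}. The first step is to put that expression in a more convenient shape: combining the rational terms via $\frac{1}{y-x}-\frac{1}{y+x}=\frac{2x}{y^2-x^2}$ and using $|x-y|(x+y)=|y^2-x^2|$, one obtains
$$
\widetilde{R}_{1/2}(x,y)=\frac{1}{\pi y}\Big(\frac{2}{y^2-x^2}-\frac{1}{x^2}\log\frac{y^2}{|y^2-x^2|}\Big),\qquad x\neq y.
$$
Everything then follows by elementary analysis of this formula in each of the three regions. The one point requiring some care is that the logarithm must not be bounded crudely by $\mathcal{O}(1)$: in the off-diagonal regions part of the decay comes precisely from the smallness of $\log\frac{y^2}{|y^2-x^2|}$, so a first-order expansion is needed.

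In the region $2x\le y$ I would use $y^2-x^2\simeq y^2$ together with the elementary bound $0<-\log(1-t)\le\frac{t}{1-t}$ applied with $t=x^2/y^2\in(0,1/4]$, which gives $\log\frac{y^2}{|y^2-x^2|}\lesssim x^2/y^2$; hence both $\frac{1}{y^2-x^2}$ and $\frac{1}{x^2}\log\frac{y^2}{|y^2-x^2|}$ are $\lesssim y^{-2}$, and after multiplying by $\frac{1}{\pi y}$ this yields $|\widetilde{R}_{1/2}(x,y)|\lesssim y^{-3}$. In the region $y\le x/2$ I would use $|y^2-x^2|=x^2-y^2\simeq x^2$ and, since $x^2/y^2\ge4$, the comparison $\log\frac{x^2-y^2}{y^2}=\log(\tfrac{x^2}{y^2}-1)\simeq\log\frac{x}{y}$; the logarithmic summand then contributes $\simeq x^{-2}y^{-1}\log\frac{x}{y}$, while the rational summand contributes $\lesssim x^{-2}y^{-1}\lesssim x^{-2}y^{-1}\log\frac{x}{y}$, giving the stated off-diagonal bound.

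The diagonal region $x/2<y<2x$ is the heart of the matter. There I would subtract the proposed principal part and exhibit the cancellation: a short computation gives $\frac{2}{y^2-x^2}-\frac{1}{x(y-x)}=\frac{-1}{x(x+y)}$, so that
$$
\widetilde{R}_{1/2}(x,y)-\frac{1}{\pi}\frac{(xy)^{-1}}{y-x}=\frac{1}{\pi y}\Big(\frac{-1}{x(x+y)}-\frac{1}{x^2}\log\frac{y^2}{|y^2-x^2|}\Big).
$$
On this region $x\simeq y$ and $x+y\simeq y$, so the first summand is $\mathcal{O}(y^{-3})$. For the second, writing $|y^2-x^2|=|y-x|(x+y)$ and using $x+y\simeq y$ gives $\frac{y^2}{|y^2-x^2|}\simeq\frac{y}{|y-x|}$; since $|y-x|<y$ throughout (because $y>x/2$ when $y<x$, and trivially otherwise), $\log\frac{y}{|y-x|}\ge0$, whence $\big|\log\frac{y^2}{|y^2-x^2|}\big|\lesssim1+\log\frac{y}{|y-x|}$. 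Finally, from $\log\frac{xy}{(y-x)^2}=\log\frac{x}{y}+2\log\frac{y}{|y-x|}$ and $|\log\frac{x}{y}|<\log2$ on the diagonal, one gets $1+\log\frac{y}{|y-x|}\simeq1+\log\frac{xy}{(y-x)^2}$ (both sides are bounded below by a fixed positive constant, e.g.\ because $\frac{xy}{(y-x)^2}\ge2$ there). Hence the second summand is $\mathcal{O}\big(y^{-3}(1+\log\frac{xy}{(y-x)^2})\big)$, which absorbs the first, and the diagonal assertion follows.

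I expect the only real obstacle to be the bookkeeping in the diagonal region — pinning down the exact cancellation that reduces the remainder to size $y^{-3}$, and checking the comparability of $\log\frac{y}{|y-x|}$ with $1+\log\frac{xy}{(y-x)^2}$; there is no conceptual difficulty, and nothing beyond Proposition \ref{prop:kere12} and elementary calculus is used.
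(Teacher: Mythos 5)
Your proposal is correct and follows exactly the route the paper intends: the paper's proof of this proposition is the single line ``Simple analysis based on the explicit formula, see Proposition \ref{prop:kere12}'', and your argument is precisely that analysis carried out in full, with the algebraic regrouping, the first-order expansion of the logarithm off the diagonal, and the cancellation against the principal part on the diagonal all checking out.
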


\begin{proof}
Simple analysis based on the explicit formula, see Proposition \ref{prop:kere12}.
\end{proof}

\begin{lem} \label{lem:Rlowe}
Let $0 \neq \nu < 1$ be fixed. There exist constants $b>1$ and $c>0$ such that
\begin{itemize}
\item[(a)]
in case $\nu < 0$
\begin{align*}
\widetilde{R}_{\nu}(x,y) & \le -c x^{-2} y^{-2\nu}, \qquad 0 < y \le x/b, \\
\widetilde{R}_{\nu}(x,y) & \ge c x^{-2\nu-1}y^{-1}, \qquad 0 < bx \le y;
\end{align*}
\item[(b)]
in case $0 < \nu < 1/2$
\begin{align*}
\widetilde{R}_{\nu}(x,y) & \le -c x^{-2} y^{-2\nu}, \qquad 0 < y \le x/b, \\
\widetilde{R}_{\nu}(x,y) & \le - c x^{-2\nu-1}y^{-1}, \qquad 0 < bx \le y;
\end{align*}
\item[(c)]
in case $\nu = 1/2$
\begin{align*}
\widetilde{R}_{\nu}(x,y) & \ge c x^{-2} y^{-1} \log\frac{x}y, \qquad 0 < y \le x/b, \\
\widetilde{R}_{\nu}(x,y) & \ge c y^{-3}, \qquad 0 < bx \le y.
\end{align*}
\item[(d)]
in case $1/2 < \nu < 1$
\begin{align*}
\widetilde{R}_{\nu}(x,y) & \ge c x^{-2} y^{-2\nu}, \qquad 0 < y \le x/b, \\
\widetilde{R}_{\nu}(x,y) & \ge c x^{-2\nu+1}y^{-3}, \qquad 0 < bx \le y.
\end{align*}
\end{itemize}
\end{lem}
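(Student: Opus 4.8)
\emph{Strategy.} I would reduce everything to the classical potential kernel. By \eqref{Bpe}, the exotic compensated potential $(B_\nu^{\textrm{exo}})^{-1/2}$ has kernel $(xy)^{-2\nu}K_{-\nu}(x,y)$ with respect to $d\eta_\nu$, where $K_\mu(x,y)$ denotes the kernel of the classical potential $(B_\mu^{\textrm{cls}})^{-1/2}$ (with the compensation when $\mu\le-1/2$), $\mu=-\nu\in(-1,\infty)$. Since $\widetilde R_\nu=D\circ(B_\nu^{\textrm{exo}})^{-1/2}$, formula \eqref{75} can be recast as
\[
\widetilde R_\nu(x,y)=y^{-2\nu}\,\partial_x\!\big[x^{-2\nu}K_{-\nu}(x,y)\big]=y^{2\mu}\,\partial_x\!\big[x^{2\mu}K_\mu(x,y)\big],\qquad \mu=-\nu .
\]
Thus it is enough to determine the leading behaviour of $x^{2\mu}K_\mu(x,y)$, as a function of $x$, in the two off-diagonal regions (with control of the remainder after one $x$-derivative), and then inspect the sign of the outcome.

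\emph{Step 1: the kernel $K_\mu$.} Splitting the defining $t$-integral for $K_\mu$ at $t\simeq\max(x,y)^2$ and evaluating the ensuing Gaussian/Gamma integrals via \eqref{bes:est} and the series \eqref{bes:ser} should give, for $b$ large: in $0<y\le x/b$,
\[
K_\mu(x,y)=\frac{\Gamma(\mu+1/2)}{\sqrt\pi\,\Gamma(\mu+1)}\,x^{-2\mu-1}\,(1+\text{error})\quad(\mu\ne-1/2),\qquad K_{-1/2}(x,y)=-\tfrac{2}{\pi}\log\tfrac xy\,(1+\text{error});
\]
in $0<bx\le y$, for $\mu>-1/2$ (where $K_\mu$ is symmetric) $K_\mu(x,y)=\frac{\Gamma(\mu+1/2)}{\sqrt\pi\,\Gamma(\mu+1)}\,y^{-2\mu-1}(1+\text{error})$, while for $-1<\mu\le-1/2$ the $x$-independent part is annihilated by the compensation and $K_\mu(x,y)=\frac{1}{2\sqrt\pi\,(\mu+1)}\frac{\Gamma(\mu+3/2)}{\Gamma(\mu+1)}\,x^2y^{-2\mu-3}(1+\text{error})$, with a strictly positive leading constant. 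Here ``error'' is controlled by a positive power of $1/b$, also after one $x$-differentiation. The two decisive features: the leading constant in the first display changes sign exactly at $\mu=-1/2$ (through $\Gamma(\mu+1/2)$), and the compensation reshapes the far-region leading term once $\mu\le-1/2$. The borderline case $\mu=-1/2$ ($\nu=1/2$) can also be read off directly from Proposition \ref{prop:kere12} and Proposition \ref{prop:keru12}.

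\emph{Step 2: differentiate and collect signs.} Denote by $C^-_\mu,D_\mu,E_\mu$ the three leading constants above. Feeding Step 1 into $\widetilde R_\nu(x,y)=y^{2\mu}\partial_x[x^{2\mu}K_\mu(x,y)]$: in $0<y\le x/b$ one gets $x^{2\mu}K_\mu(x,y)\sim C^-_\mu x^{-1}$ (or $\sim-\tfrac2\pi x^{-1}\log\tfrac xy$ at $\mu=-1/2$), so $\widetilde R_\nu(x,y)\sim-C^-_\mu x^{-2}y^{-2\nu}$ (or $\sim\tfrac2\pi x^{-2}y^{-1}\log\tfrac xy$), which is $\le-c\,x^{-2}y^{-2\nu}$ when $\nu<1/2$ and $\ge c\,x^{-2}y^{-2\nu}$ when $\nu>1/2$ (choose $b>e^2$ in the logarithmic case) — this is the diagonal-side bound in (a), (b), (d) and the logarithmic bound in (c). In $0<bx\le y$: for $\nu<1/2$ the leading term of $x^{2\mu}K_\mu$ is $D_\mu x^{2\mu}y^{-2\mu-1}$, hence $\widetilde R_\nu(x,y)\sim-2\nu D_\mu x^{-2\nu-1}y^{-1}$, positive for $\nu<0$ (bound in (a)) and negative for $0<\nu<1/2$ (bound in (b)); for $\nu\ge1/2$ the leading term is $E_\mu x^{2\mu+2}y^{-2\mu-3}$, hence $\widetilde R_\nu(x,y)\sim2(1-\nu)E_\mu x^{-2\nu+1}y^{-3}>0$, which at $\nu=1/2$ becomes $\simeq y^{-3}$ (bound in (c)) and for $\nu>1/2$ is the bound in (d). Since the differentiated error is a positive power of $1/b$ times the leading term, for $b$ large the stated sign and magnitude persist. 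Alternatively, for $\nu<0$ one obtains (a) at once from $\widetilde R_\nu(x,y)=-(xy)^{-2\nu-1}R^{*}_{-\nu-1}(x,y)$ and the classical Riesz-kernel off-diagonal estimates underlying Theorem \ref{thm:Rcls}.

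\emph{Main obstacle.} The heart of the matter is Step 1, and there the delicate point is the sign-definite \emph{lower} bounds in the far region. The cleanest route is arguably to bypass $K_\mu$ and estimate $R_{-\nu}(x,y)=\tfrac1{\sqrt\pi}\int_0^\infty\partial_x W_t^{-\nu}(x,y)\,t^{-1/2}\,dt$ directly; by \eqref{bes:dif} and \eqref{bes:rec} the integrand equals $\tfrac{(xy)^{\nu}}{(2t)^2}e^{-(x^2+y^2)/4t}\big(yI_{-\nu+1}(xy/2t)-xI_{-\nu}(xy/2t)\big)$, which \emph{changes sign} as $t$ runs over $(0,\infty)$; the work is to localize the integral near $t\simeq\max(x,y)^2$ via \eqref{bes:est}, estimate the cancellation over the complementary ranges, and show that the dominant-sign contribution wins with the asserted power. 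The second subtlety is the bookkeeping at $\mu=-1/2$: for $-1<\mu\le-1/2$ the compensating term $W_t^{-\nu}(0,y)$ cancels the naive leading term of $K_\mu$ in the far region and reverses the sign of what survives — this is exactly why (d) comes out with the opposite sign of (b) and why the phase shift at $\nu=1/2$ appears between Theorems \ref{thm:Rexo} and \ref{thm:Rexob}. The near-diagonal-side estimates are comparatively routine.
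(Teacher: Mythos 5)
Your strategy is in essence the paper's: express $\widetilde R_\nu(x,y)$ through the classical Riesz kernel $R_{-\nu}$ and the classical compensated potential kernel $K_{-\nu}$, determine the leading off-diagonal behaviour with explicit Gamma-function constants, and read the signs off those constants (falling back on the explicit formula of Proposition \ref{prop:kere12} at $\nu=1/2$). All the leading constants you quote --- $\Gamma(\mu+1/2)/(\sqrt\pi\,\Gamma(\mu+1))$ in the near region and in the far region for $\mu>-1/2$, and $\Gamma(\mu+3/2)/(2\sqrt\pi\,\Gamma(\mu+2))\,x^2y^{-2\mu-3}$ in the far region for $-1<\mu<-1/2$ --- coincide with the limits \eqref{81}, \eqref{77}, \eqref{78} established in the paper, and your sign bookkeeping in Step 2 reproduces the paper's conclusions exactly, including the sign reversal at $\mu=-1/2$ caused by the compensation.

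The shortfall is in execution, and the route you sketch for closing it is harder than the one the paper actually takes. You propose either to extract asymptotics of $K_\mu$ with an error term that remains small after one $x$-differentiation, or to estimate $R_{-\nu}(x,y)=\pi^{-1/2}\int_0^\infty \partial_x W_t^{-\nu}(x,y)\,t^{-1/2}\,dt$ directly by localizing in $t$ and controlling the cancellation coming from the sign change of the integrand; you yourself flag this as ``the work''. The paper sidesteps both difficulties. It uses the product-rule splitting \eqref{37}/\eqref{75}, $\widetilde R_\nu(x,y)=(xy)^{-2\nu}R_{-\nu}(x,y)-(xy)^{-2\nu}\tfrac{2\nu}{x}K_{-\nu}(x,y)$, so the only derivative information required is the off-diagonal limit of $R_{-\nu}$ itself, which is quoted ready-made from the proof of \cite[Lemma 4.4]{BHNV}; no differentiation of asymptotic expansions ever occurs. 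And the limits of $K_{-\nu}$ are obtained not by splitting the $t$-integral at $\max(x,y)^2$ but by a single change of variable ($u=2t/x^2$, resp.\ $u=2t/y^2$), after which dominated convergence applies and the limit is an explicit Gamma integral --- no cancellation analysis is needed. The only place requiring a genuine decomposition is the compensated far-region limit \eqref{78}, where the integrand is split as $\mathcal{I}_1+\mathcal{I}_2$ and each piece is handled again by dominated convergence. So: right strategy, right constants, but the decisive limits are asserted rather than derived, and carrying out your plan as written would spend effort on a cancellation problem that a change of variables dissolves.
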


\begin{proof}
Recall that, see \eqref{75},
$$
\widetilde{R}_{\nu}(x,y) = (xy)^{-2\nu} R_{-\nu}(x,y) - (xy)^{-2\nu}\frac{2\nu}x K_{-\nu}(x,y),
$$
where the classical compensated potential kernel is given by
$$
K_{\mu}(x,y) = \frac{1}{\sqrt{\pi}} \int_0^{\infty} \Big( W_t^{\mu}(x,y) - \chi_{\{\mu \le -1/2\}} W_t^{\mu}(0,y)\Big)\,
	\frac{dt}{\sqrt{t}}, \qquad \mu > -1.
$$
From the proof of \cite[Lemma 4.4]{BHNV} we know that$^{\dag}$ for $\mu > -1$
\footnote{$\dag$ In the computations of the proof of \cite[Lemma 4.4]{BHNV} the factor $\pi^{-1/2}$
involved in the explicit expression for $R_{\lambda}(x,y)$ there is missing.}
\begin{align*}
\lim_{z \to 0^+} x^{2\mu+2} R_{\mu}(x,y) & = -\frac{2\Gamma(\mu+3/2)}{\sqrt{\pi}\Gamma(\mu+1)}, \qquad z = \frac{y}{x},\\
\lim_{z \to 0^+} x^{-1} y^{2\mu+3} R_{\mu}(x,y) & = \frac{\Gamma(\mu+3/2)}{\sqrt{\pi} \Gamma(\mu+2)}, \qquad z = \frac{x}{y}.
\end{align*}

We consider first $0 \neq \nu < 1/2$, the case when no compensation in the potential occurs.
Here, we will need the following identities valid for $\mu > -1/2$,
\begin{align} \label{81}
\lim_{z \to 0^+} x^{2\mu+1} K_{\mu}(x,y) & = \frac{\Gamma(\mu+1/2)}{\sqrt{\pi}\Gamma(\mu+1)}, \qquad z = \frac{y}{x},\\
\lim_{z \to 0^+} y^{2\mu+1} K_{\mu}(x,y) & = \frac{\Gamma(\mu+1/2)}{\sqrt{\pi}\Gamma(\mu+1)}, \qquad z = \frac{x}{y}. \nonumber
\end{align}
These identities are equivalent, for symmetry reasons. So it is enough we verify the first one.
Using the formula for $K_{\mu}(x,y)$ and then changing the variable $u = 2t/x^2$ we get
\begin{align*}
K_{\mu}(x,y) & = \frac{1}{\sqrt{\pi}} \int_0^{\infty} W_t^{\mu}(x,y) \, \frac{dt}{\sqrt{t}} \\
	& = \frac{1}{\sqrt{\pi}} \int_0^{\infty} \frac{1}{2t} (xy)^{-\mu} e^{-\frac{x^2+y^2}{4t}}I_{\mu}\Big(\frac{xy}{2t}\Big)\,
		\frac{dt}{\sqrt{t}} \\
	& = \frac{1}{\sqrt{2\pi}} x^{-2\mu-1} \int_0^{\infty} \Big(\frac{z}{u}\Big)^{-\mu} I_{\mu}\Big(\frac{z}{u}\Big)
		e^{-\frac{1+z^2}{2u}} \, \frac{du}{u^{\mu+3/2}},
\end{align*}
with $z=y/x$. Denoting the last integral by $\mathcal{J}=\mathcal{J}(z)$ and using the dominated convergence theorem (see the proof of
\cite[Lemma 4.4]{BHNV}) we get
$$
\lim_{z \to 0^+} \mathcal{J}(z) = \frac{1}{2^{\mu}\Gamma(\mu+1)} \int_0^{\infty} e^{-\frac{1}{2u}}\, \frac{du}{u^{\mu+3/2}}
	= \frac{\sqrt{2}\Gamma(\mu+1/2)}{\Gamma(\mu+1)},
$$
where we applied \eqref{bes:lim}. This gives \eqref{81}.

Let now $z = y/x$. In view of what was said above,
\begin{align*}
\lim_{z \to 0^+} x^2 y^{2\nu} \widetilde{R}_{\nu}(x,y) & =
	\lim_{z \to 0^+} x^{-2\nu+2} R_{-\nu}(x,y) - 2\nu \lim_{z\to 0^+} x^{-2\nu+1} K_{-\nu}(x,y) \\
	& = -\frac{2\Gamma(-\nu+3/2)}{\sqrt{\pi}\Gamma(-\nu+1)} - 2\nu \frac{\Gamma(-\nu+1/2)}{\sqrt{\pi}\Gamma(-\nu+1)}
	= -\frac{\Gamma(-\nu+1/2)}{\sqrt{\pi}\Gamma(-\nu+1)} < 0.
\end{align*}

Next, let $z = x/y$. Then
\begin{align*}
\lim_{z \to 0^+} x^{2\nu+1} y \widetilde{R}_{\nu}(x,y) & =
	\lim_{z \to 0^+} z^2 x^{-1} y^{-2\nu+3} R_{-\nu}(x,y) - 2\nu \lim_{z \to 0^+} y^{-2\nu+1} K_{-\nu}(x,y) \\
& = -2\nu \frac{\Gamma(-\nu+1/2)}{\sqrt{\pi}\Gamma(-\nu+1)}.
\end{align*}
Observe that the above expression is positive when $\nu < 0$ and negative if $0 < \nu < 1/2$.

In conclusion, we see that (a) and (b) of Lemma \ref{lem:Rlowe} hold.
Item (c) is verified with the aid of the explicit formula for $\widetilde{R}_{1/2}(x,y)$, see Proposition \ref{prop:kere12}.
We have
\begin{align*}
\frac{\pi x^2 y}{\log\frac{x}y} \widetilde{R}_{1/2}(x,y) & = \frac{2}{(1-z^2)\log z}
	+ \frac{\log\frac{z^2}{1-z^2}}{\log z}, \qquad z = \frac{y}x, \\
\pi y^3 \widetilde{R}_{1/2}(x,y) & = \frac{2}{1-z^2} - \frac{1}{z^2} \log\frac{1}{1-z^2}, \qquad z = \frac{x}y.
\end{align*}
Taking the limits of the right-hand sides here as $z \to 0^+$ we get the desired bounds.

It remains to deal with the case $1/2 < \nu < 1$. We will need the following relations that hold for $-1 < \mu < -1/2$
\begin{align}
\lim_{z \to 0^+} x^{2\mu+1} K_{\mu}(x,y) & = 
	\frac{\Gamma(\mu+1/2)}{\sqrt{\pi}\Gamma(\mu+1)},
	\qquad z = \frac{y}x, \label{77}\\
\lim_{z \to 0^+} x^{-2} y^{2\mu+3} K_{\mu}(x,y) & = \frac{\Gamma(\mu+3/2)}{2\sqrt{\pi}\Gamma(\mu+2)}, \qquad z = \frac{x}y. \label{78}
\end{align}
We will prove \eqref{77} and \eqref{78} in a moment.

Let $z = y/x$. Recalling that now $1/2 < \nu < 1$, we have
\begin{align*}
\lim_{z \to 0^+} x^2 y^{2\nu} \widetilde{R}_{\nu}(x,y) & =
	\lim_{z \to 0^+} x^{-2\nu+2} R_{-\nu}(x,y) - 2\nu \lim_{z \to 0^+} x^{-2\nu+1} K_{-\nu}(x,y) \\
& = -\frac{2\Gamma(-\nu+3/2)}{\sqrt{\pi}\Gamma(-\nu+1)} - 2\nu \frac{\Gamma(-\nu+1/2)}{\sqrt{\pi}\Gamma(-\nu+1)} \\
& = - \frac{\Gamma(-\nu+1/2)}{\sqrt{\pi}\Gamma(-\nu+1)} > 0.
\end{align*}

Next, let $z = x/y$. Then
\begin{align*}
\lim_{z\to 0^+} x^{2\nu-1} y^3 \widetilde{R}_{\nu}(x,y) & = 
	\lim_{z\to 0^+} x^{-1} y^{-2\nu+3} R_{-\nu}(x,y) - 2\nu \lim_{z \to 0^+} x^{-2} y^{-2\nu+3} K_{-\nu}(x,y) \\
& = \frac{\Gamma(-\nu+3/2)}{\sqrt{\pi}\Gamma(-\nu+2)} - 2\nu \frac{\Gamma(-\nu+3/2)}{2\sqrt{\pi}\Gamma(-\nu+2)} \\
& = \frac{\Gamma(-\nu+3/2)}{\sqrt{\pi}\Gamma(-\nu+1)} > 0.
\end{align*}

Thus (d) of Lemma \ref{lem:Rlowe} is justified provided that we show \eqref{77} and \eqref{78}.

Considering \eqref{77}, we proceed essentially as in the corresponding computation in the case $\mu > -1/2$. We use
the explicit form of $W_t^{\mu}(x,y)$ and the change of variable $u=2t/x^2$ to get
$$
K_{\mu}(x,y) = \frac{1}{\sqrt{2\pi}} x^{-2\mu-1} \int_0^{\infty}
	\bigg( \Big(\frac{z}{u}\Big)^{-\mu}I_{\mu}\Big(\frac{z}{u}\Big) e^{-\frac{1+z^2}{2u}} - w^{-\mu}I_{\mu}(w)\Big|_{w=0^+}
		e^{-\frac{z^2}{2u}}\bigg) \, \frac{du}{u^{\mu+3/2}},
$$
being $z=y/x$. Denoting by $\mathcal{J}=\mathcal{J}(z)$ the last integral and using the dominated convergence theorem we have
$$
\lim_{z\to 0^+}\mathcal{J}(z) = \frac{1}{2^{\mu}\Gamma(\mu+1)} \int_0^{\infty}\Big( e^{-\frac{1}{2u}}-1\Big) \, \frac{du}{u^{\mu+3/2}}.
$$
The integral occurring here can be computed. Indeed, changing the variable $t=1/(2u)$ and using Tonelli's theorem we get
\begin{align*}
\int_0^{\infty}\Big( e^{-\frac{1}{2u}}-1\Big) \, \frac{du}{u^{\mu+3/2}}
& = 2^{\mu+1/2}\int_0^{\infty}\big( e^{-t}-1\big) t^{\mu-1/2}\, dt \\
& = - 2^{\mu+1/2} \int_0^{\infty} \int_0^t e^{-s}\, ds \, t^{\mu-1/2}\, dt \\
& = - 2^{\mu+1/2} \int_0^{\infty} e^{-s} \int_s^{\infty} t^{\mu-1/2}\, dt \, ds \\
& = \frac{2^{\mu+1/2}}{\mu+1/2} \int_0^{\infty} e^{-s} s^{\mu+1/2}\, ds = 2^{\mu+1/2}\Gamma(\mu+1/2).
\end{align*}
From this \eqref{77} follows.

Passing to \eqref{78}, we again use the explicit form of $W_t^{\mu}(x,y)$, but now change the variable $u=2t/y^2$ and get
$$
K_{\mu}(x,y) = \frac{1}{\sqrt{2\pi}} x^2 y^{-2\mu-3} \int_0^{\infty}
	\frac{\Big(\frac{z}{u}\Big)^{-\mu}I_{\mu}\Big(\frac{z}{u}\Big) e^{-\frac{1+z^2}{2u}} - w^{-\mu}I_{\mu}(w)\Big|_{w=0^+}
		e^{-\frac{1}{2u}}}{\Big(\frac{z}{u}\Big)^2} \, \frac{du}{u^{\mu+7/2}},
$$
where $z=x/y$.
Decompose the last integral as $\mathcal{I}_1(z)+\mathcal{I}_2(z)$, where
\begin{align*}
\mathcal{I}_1(z) & = \int_0^{\infty}
	\frac{\Big(\frac{z}{u}\Big)^{-\mu}I_{\mu}\Big(\frac{z}{u}\Big) e^{-\frac{1+z^2}{2u}} - w^{-\mu}I_{\mu}(w)\Big|_{w=0^+}
		e^{-\frac{1+z^2}{2u}}}{\Big(\frac{z}{u}\Big)^2} \, \frac{du}{u^{\mu+7/2}}, \\
\mathcal{I}_2(z) & = \int_0^{\infty}
	\frac{w^{-\mu}I_{\mu}(w)\Big|_{w=0^+} e^{-\frac{1+z^2}{2u}} - w^{-\mu}I_{\mu}(w)\Big|_{w=0^+}
		e^{-\frac{1}{2u}}}{\Big(\frac{z}{u}\Big)^2} \, \frac{du}{u^{\mu+7/2}}.
\end{align*}

From the series expansion \eqref{bes:ser}
it is seen that the integrand in $\mathcal{I}_1(z)$ tends to $(2^{\mu+2}\Gamma(\mu+2))^{-1}e^{-1/2u}u^{-\mu-7/2}$ as
$z \to 0^+$. Therefore, by the dominated convergence theorem,
$$
\lim_{z\to 0^+}\mathcal{I}_1(z) = \frac{1}{2^{\mu+2}\Gamma(\mu+2)} \int_0^{\infty} e^{-\frac{1}{2u}}\, \frac{du}{u^{\mu+7/2}}
	= \frac{\sqrt{2}\Gamma(\mu+5/2)}{\Gamma(\mu+2)}.
$$

As for $\mathcal{I}_2(z)$, we have
$$
\mathcal{I}_2(z) = \frac{1}2 w^{-\mu}I_{\mu}(w)\Big|_{w=0^+} \int_0^{\infty} e^{-\frac{1}{2u}}
	\frac{e^{-\frac{z^2}{2u}}-1}{\frac{z^2}{2u}} \, \frac{du}{u^{\mu+5/2}}
$$
and here the integrand converges to $-e^{-1/2u}u^{-\mu-5/2}$ as $z \to 0^+$.
Consequently, by the dominated convergence theorem we get
$$
\lim_{z\to 0^+} \mathcal{I}_2(z) = \frac{-1}{2^{\mu+1}\Gamma(\mu+1)} \int_0^{\infty} e^{-\frac{1}{2u}} \frac{du}{u^{\mu+5/2}}
	= - \frac{\sqrt{2}\Gamma(\mu+3/2)}{\Gamma(\mu+1)}.
$$
The formulas obtained for the limits of $\mathcal{I}_1(z)$ and $\mathcal{I}_2(z)$ imply \eqref{78}.

The proof of Lemma \ref{lem:Rlowe} is complete.
\end{proof}

\begin{proof}[{Proof of Theorem \ref{thm:Rexo}, sufficiency part}]
We assume that $0 \neq \nu < 1/2$ and focus first on the operator $\widetilde{R}_{\nu}$. We have, see \eqref{37},
\begin{align*}
\widetilde{R}_{\nu}f(x) & = x^{-2\nu} R_{-\nu}\big( y^{2\nu}f(y)\big)(x) - \frac{2\nu}{x} x^{-2\nu}
	\big( B_{-\nu}^{\textrm{cls}}\big)^{-1/2}\big( y^{2\nu}f(y)\big)(x) \\
& \equiv \widetilde{R}_{\nu}^1 f(x) - \widetilde{R}_{\nu}^2 f(x).
\end{align*}
We will consider each of the two component operators separately.

The kernel of $\widetilde{R}_{\nu}^1$ is $\widetilde{R}_{\nu}^1(x,y) = (xy)^{-2\nu}R_{-\nu}(x,y)$.
In view of \cite[Lemma 4.3]{BHNV},
$$
\widetilde{R}_{\nu}^1(x,y) = \frac{1}{\pi} \frac{(xy)^{-\nu-1/2}}{y-x}
	+ \mathcal{O}\bigg( y^{-2\nu-2} \Big(1+\log\frac{xy}{(y-x)^2}\Big)\bigg), \qquad x/2 < y < 2x,
$$
and, moreover, in the off-diagonal region
$$
\big|\widetilde{R}_{\nu}^1(x,y)\big| \lesssim
	\begin{cases}
		x^{-2} y^{-2\nu}, & y \le x/2, \\
		x^{-2\nu+1} y^{-3}, & 2x \le y.
	\end{cases}
$$
Therefore,
$$
\big|\widetilde{R}_{\nu}^1f(x)\big|
	\lesssim |\mathcal{H}_{\nu,\loc}f(x)| + N^{\log}|f|(x) + H_0^1|f|(x) + H_{\infty}^{-2\nu+1}|f|(x).
$$

Next, we look at the operator $\widetilde{R}_{\nu}^2$. Its kernel is
$\widetilde{R}_{\nu}^2(x,y) = 2\nu x^{-1} (xy)^{-2\nu} K_{-\nu}(x,y)$.
From \cite[Theorem 2.1]{NoSt1} we have that for $\mu > -1/2$
$$
K_{\mu}(x,y) \simeq (x+y)^{-2\mu-1}\log\frac{2(x+y)}{|x-y|} \simeq
	\begin{cases}
		x^{-2\mu-1}, & y \le x/2, \\
		(xy)^{-\mu-1/2}\log\frac{xy}{(y-x)^2}, & x/2 < y < 2x, \\
		y^{-2\mu-1}, & 2x \le y.
	\end{cases}
$$
This implies the control
$$
\big|\widetilde{R}_{\nu}^2 f(x) \big| \lesssim N^{\log}|f|(x) + H_0^1|f|(x) + H_{\infty}^{-2\nu-1}|f|(x).
$$

Observing now that $H_{\infty}^{-2\nu+1}|f|$ is dominated by $H_{\infty}^{-2\nu-1}|f|$, we get the final bound
$$
\big|\widetilde{R}_{\nu} f(x) \big| \lesssim 
	|\mathcal{H}_{\nu,\loc}f(x)| + N^{\log}|f|(x) + H_0^1|f|(x) + H_{\infty}^{-2\nu-1}|f|(x).
$$
Taking into account Lemmas \ref{lem:MH}, \ref{lem:NN}, \ref{lem:H0} and \ref{lem:Hinf}, we see that
$\widetilde{R}_{\nu}$ is bounded on $L^p(x^{\delta}dx)$ when
$1< p < \infty$ and $(2\nu+1)p - 1 < \delta < 2p-1$, it is bounded from $L^1(x^{\delta}dx)$ to weak
$L^1(x^{\delta}dx)$ when $2\nu \le \delta \le 1$ (with the first inequality weakened in case $\nu=-1/2$),
and it is of restricted weak type $(p,p)$ with respect to the measure space $(\mathbb{R}_+,x^{\delta}dx)$ when $1< p < \infty$
and $(2\nu+1)p-1 \le \delta \le 2p-1$ (with the first inequality weakened in case $\nu=-1/2$).
Hence sufficiency parts of items (a1), (b1) and (c1) are verified.

Treatment of the adjoint $\widetilde{R}_{\nu}^*$ is similar. We can split
$\widetilde{R}_{\nu}^* = (\widetilde{R}_{\nu}^1)^* - (\widetilde{R}_{\nu}^2)^*$, and the kernels of the two component
operators are $\widetilde{R}_{\nu}^1(y,x)$ and $\widetilde{R}_{\nu}^2(y,x)$, respectively.
Using this and having in mind our previous considerations we arrive at the control
\begin{align*}
\big| \big(\widetilde{R}_{\nu}^1\big)^* f(x)\big| & \lesssim
	 |\mathcal{H}_{\nu,\loc}f(x)| + N^{\log}|f|(x) + H_0^2|f|(x) + H_{\infty}^{-2\nu}|f|(x), \\
\big| \big(\widetilde{R}_{\nu}^2\big)^* f(x)\big| & \lesssim
	 N^{\log}|f|(x) + H_0^0|f|(x) + H_{\infty}^{-2\nu}|f|(x),
\end{align*}
and this implies
$$
\big| \big(\widetilde{R}_{\nu}\big)^* f(x)\big| \lesssim
	 |\mathcal{H}_{\nu,\loc}f(x)| + N^{\log}|f|(x) + H_0^0|f|(x) + H_{\infty}^{-2\nu}|f|(x).
$$

In view of Lemmas \ref{lem:MH}, \ref{lem:NN}, \ref{lem:H0} and \ref{lem:Hinf} we conclude the following mapping properties.
$\widetilde{R}_{\nu}^*$ is bounded on $L^p(x^{\delta}dx)$ when
$1< p < \infty$ and $2\nu p - 1 < \delta < p-1$, it is bounded from $L^1(x^{\delta}dx)$ to weak
$L^1(x^{\delta}dx)$ when $2\nu-1 \le \delta \le 0$, and it is of restricted weak type $(p,p)$ with
respect to the measure space $(\mathbb{R}_+,x^{\delta}dx)$ when $1 < p < \infty$ and $2\nu p-1 \le \delta \le p-1$.
This gives sufficiency parts of items (a2), (b2) and (c2) of the theorem.
\end{proof}

\begin{proof}[{Proof of Theorem \ref{thm:Rexo}, necessity part}]
In this proof we always assume that $1 \le p < \infty$ and $0 \neq \nu < 1/2$.

Consider first the operator $\widetilde{R}_{\nu}$.
It is sufficient we verify the following claims.
\begin{itemize}
\item[(A)]
$\widetilde{R}_{\nu}$ is not bounded on $L^1(x^{\delta}dx)$ for $2\nu \le \delta \le 1$
(with the first inequality strictened if $\nu = -1/2$).
\item[(B)]
$\widetilde{R}_{\nu}$ is not bounded from $L^p(x^{\delta}dx)$ to weak $L^p(x^{\delta}dx)$
when $p > 1$ and either $\delta = (2\nu+1)p-1$ or $\delta = 2p-1$.
\item[(C)]
$\widetilde{R}_{\nu}$ is not of restricted weak type $(p,p)$ with respect to $(\mathbb{R}_+,x^{\delta}dx)$
if either $\delta < (2\nu+1)p-1$, with the inequality weakened when $\nu = -1/2$, or $\delta > 2p-1$.
\end{itemize}

To prove (A) we need the estimate in the diagonal region
\begin{equation} \label{dig}
\widetilde{R}_{\nu}(x,y) = \frac{1}{\pi} \frac{(xy)^{-\nu-1/2}}{y-x}
	+ \mathcal{O}\bigg( y^{-2\nu-2} \Big(1+\log\frac{xy}{(y-x)^2}\Big)\bigg), \qquad x/2 < y < 2x,
\end{equation}
that can be deduced from the proof of the sufficiency part in Theorem \ref{thm:Rexo}.
Following the counterexample from (C) of the proof of \cite[Theorem 2.3]{BHNV}, take
$f_{\epsilon} = \chi_{(1,1+\epsilon)}$ for $0 < \epsilon < 1/4$. Then, in view of \eqref{dig},
\begin{align*}
\big\| \widetilde{R}_{\nu}f_{\epsilon}\big\|_{L^1(x^{\delta}dx)} & \ge \int_{1+2\epsilon}^{2}
	\bigg| \int_{x/2}^{2x} \widetilde{R}_{\nu}(x,y) f_{\epsilon}(y)\, d\eta_{\nu}(y)\bigg| x^{\delta}\, dx \\
& \gtrsim \int_{1+2\epsilon}^2 \bigg| \int_{1}^{1+\epsilon} \frac{dy}{y-x}\bigg| \, dx
	- c \int_{1+2\epsilon}^2 N^{\log}f_{\epsilon}(x) x^{\delta}\, dx,
\end{align*}
with a constant $c$ independent of $\epsilon$. If $\widetilde{R}_{\nu}$ were bounded on $L^1(x^{\delta}dx)$ we would have,
see Lemma~\ref{lem:NN},
$$
\int_{1+2\epsilon}^2 \bigg| \int_{1}^{1+\epsilon} \frac{dy}{y-x}\bigg| \, dx
	\lesssim \big\| \widetilde{R}_{\nu}f_{\epsilon}\big\|_{L^1(x^{\delta}dx)} +
	\big\| N^{\log}f_{\epsilon}\big\|_{L^1(x^{\delta}dx)} \lesssim \|f_{\epsilon}\|_{L^1(x^{\delta}dx)} \simeq \epsilon.
$$
But this is not true as $\epsilon \to 0^+$, since
$$
\int_{1+2\epsilon}^2 \bigg| \int_{1}^{1+\epsilon} \frac{dy}{y-x}\bigg| \, dx \simeq
	\int_{1+2\epsilon}^2 \log\frac{x-1}{x-1-\epsilon}\, dx \simeq \epsilon \log\frac{1}{\epsilon},
		\qquad 0 < \epsilon < \frac{1}{4}.
$$
This counterexample actually shows that $\widetilde{R}_{\nu}$ is not bounded on $L^1(x^{\delta}dx)$
for any $\delta \in \mathbb{R}$.

Next, we show (B). To this end we assume that $p>1$. Consider first $\delta = 2p-1$.
Split $\widetilde{R}_{\nu}$ as
$$
\widetilde{R}_{\nu}f(x) = \widetilde{R}_{\nu}\big( \chi_{\{0 < y < x/b\}}f(y)\big)(x) + 
	\widetilde{R}_{\nu}\big( \chi_{\{x/b \le y\}}f(y)\big)(x),
$$
where $b$ is the constant from Lemma \ref{lem:Rlowe}; we can assume that $b > 2$.
Then, by Lemma \ref{lem:Rlowe},
\begin{equation} \label{len}
\Big| \widetilde{R}_{\nu}\big( \chi_{\{0 < y < x/b\}}f(y)\big)(x) \Big| \gtrsim \big| H_{0,b}^1 f(x)\big|, \qquad x > 0,
\end{equation}
and, moreover,
\begin{align*}
& \Big| \widetilde{R}_{\nu}\big( \chi_{\{x/b \le y\}}f(y)\big)(x) \Big| \\ & \quad \lesssim \big( H_0^1-H_{0,b}^1\big)|f|(x)
	+ H_{\infty}^{-2\nu-1}|f|(x) + |\mathcal{H}_{\nu,\loc}f(x)| + N^{\log}|f|(x), \qquad x > 0.
\end{align*}
By Proposition \ref{prop:Hb} and Lemmas \ref{lem:Hinf}, \ref{lem:MH} and \ref{lem:NN} we see that the right-hand side
here satisfies weak type $(p,p)$ inequality with respect to $(\mathbb{R}_+,x^{2p-1}dx)$.
But the right-hand side in \eqref{len} does not, in view of the necessity part of Lemma \ref{lem:H0}(b) and the
comment preceding Proposition \ref{prop:Hb}. Altogether, this shows that $\widetilde{R}_{\nu}$ is not weak
type $(p,p)$ with respect to $(\mathbb{R}_+,x^{2p-1}dx)$.

Treatment of $\delta = (2\nu+1)p-1$ is similar. One splits
$$
\widetilde{R}_{\nu}f(x) = \widetilde{R}_{\nu}\big( \chi_{\{bx < y\}}f(y)\big)(x) + 
	\widetilde{R}_{\nu}\big( \chi_{\{y \le bx\}}f(y)\big)(x)
$$
and then uses Lemma \ref{lem:Rlowe} with $b > 2$ to get
\begin{equation} \label{len2}
\Big| \widetilde{R}_{\nu}\big( \chi_{\{bx < y\}}f(y)\big)(x) \Big| \gtrsim \big| H_{\infty,b}^{-2\nu-1} f(x)\big|, \qquad x > 0.
\end{equation}
Furthermore,
\begin{align*}
& \Big| \widetilde{R}_{\nu}\big( \chi_{\{y \le bx\}}f(y)\big)(x) \Big| \\
& \quad \lesssim \big( H_{\infty}^{-2\nu-1}-H_{\infty,b}^{-2\nu-1}\big)|f|(x)
	+ H_{0}^{1}|f|(x) + |\mathcal{H}_{\nu,\loc}f(x)| + N^{\log}|f|(x), \qquad x > 0.
\end{align*}
The right-hand side here is of weak type $(p,p)$ with respect to $(\mathbb{R}_+,x^{(2\nu+1)p-1}dx)$, by Proposition
\ref{prop:Hb} and Lemmas \ref{lem:H0}, \ref{lem:MH}, \ref{lem:NN}. But the right-hand side in \eqref{len2} is not,
due to the necessity part of Lemma \ref{lem:Hinf}(b), see the comment preceding Proposition \ref{prop:Hb}.
It follows that $\widetilde{R}_{\nu}$ is not of weak type $(p,p)$ with respect to $(\mathbb{R}_+,x^{(2\nu+1)p-1}dx)$.

Finally, we pass to (C). Here the line of reasoning is exactly the same as in case of (B).
We use the same decompositions of $\widetilde{R}_{\nu}$ and argue with the aid of Proposition \ref{prop:Hb}
and Lemmas \ref{lem:H0}(c), \ref{lem:Hinf}(c), \ref{lem:MH} and \ref{lem:NN}.
The details are straightforward and thus omitted.

Analysis of $\widetilde{R}_{\nu}^{*}$ is completely parallel and the details are left to the reader.
We only mention that the relevant Hardy operators are $H_0^0$ and $H_{\infty}^{-2\nu}$.
\end{proof}

\begin{pro} \label{pro:restl12}
Let $1/2 < \nu < 1$ be fixed. Then
$$
\widetilde{R}_{\nu}(x,y) = \frac{1}{\pi} \frac{(xy)^{-\nu-1/2}}{y-x}
	+ \mathcal{O}\bigg( y^{-2\nu-2} \Big(1+\log\frac{xy}{(y-x)^2}\Big)\bigg), \qquad x/2 < y < 2x.
$$
Moreover, in the off-diagonal region
$$
\big|\widetilde{R}_{\nu}(x,y)\big| \lesssim
	\begin{cases}
		x^{-2} y^{-2\nu}, & y \le x/2, \\
		x^{-2\nu+1} y^{-3}, & 2x \le y.
	\end{cases}
$$
\end{pro}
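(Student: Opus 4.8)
The plan is to isolate the singular (diagonal) part of the kernel and estimate it together with a remainder, exactly along the lines already used for $\widetilde{R}_{\nu}$ when $0\neq\nu<1/2$ in the proof of Theorem~\ref{thm:Rexo}, but now with the classical order $-\nu$ lying in $(-1,-1/2)$, so that compensation in the potential is present. Concretely, I would start from the decomposition recorded in the proof of Lemma~\ref{lem:Rlowe}: by \eqref{75},
\[
\widetilde{R}_{\nu}(x,y) = (xy)^{-2\nu} R_{-\nu}(x,y) - 2\nu\, x^{-1}(xy)^{-2\nu} K_{-\nu}(x,y),
\qquad
K_{-\nu}(x,y) = \frac{1}{\sqrt{\pi}}\int_0^{\infty}\!\big(W_t^{-\nu}(x,y)-W_t^{-\nu}(0,y)\big)\,\frac{dt}{\sqrt t}.
\]
For the first summand I would invoke the estimates of the classical Riesz--Bessel kernel from \cite[Lemma~4.3]{BHNV}, which are valid throughout the classical range of the order and hence apply with order $-\nu>-1$; they yield precisely the principal term $\frac1\pi\frac{(xy)^{-\nu-1/2}}{y-x}$ on the strip $x/2<y<2x$ with remainder $\mathcal{O}\big(y^{-2\nu-2}(1+\log\frac{xy}{(y-x)^2})\big)$, and the off-diagonal bounds $|(xy)^{-2\nu}R_{-\nu}(x,y)|\lesssim x^{-2}y^{-2\nu}$ for $y\le x/2$ and $\lesssim x^{-2\nu+1}y^{-3}$ for $2x\le y$. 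So the classical Riesz piece already supplies the asserted main term and otherwise fits into the stated bounds.

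The crux is therefore to establish, for $-1<-\nu<-1/2$, the pointwise bounds
\[
|K_{-\nu}(x,y)|\lesssim
\begin{cases}
x^{2\nu-1}, & y\le x/2,\\
(xy)^{\nu-1/2}\big(1+\log\frac{xy}{(y-x)^2}\big), & x/2<y<2x,\\
x^{2}y^{2\nu-3}, & 2x\le y,
\end{cases}
\]
which are the compensated counterparts of the estimates for $K_{\mu}$, $\mu>-1/2$, quoted from \cite[Theorem~2.1]{NoSt1} in the proof of Theorem~\ref{thm:Rexo}. I would prove these via the scaling identity $K_{-\nu}(x,y)=x^{2\nu-1}\kappa(y/x)$ with $\kappa(z)=K_{-\nu}(1,z)$, which reduces matters to showing that $\kappa$ is bounded near $z=0$ (this is exactly \eqref{77}), decays like $z^{2\nu-3}$ as $z\to\infty$ (a uniform version of \eqref{78}), and has at worst a logarithmic singularity $\kappa(z)=\mathcal{O}(1+\log\frac1{|1-z|})$ as $z\to1$. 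The last two points require dissecting the defining integral according to the two Bessel asymptotics \eqref{bes:est}: on the range $xy/t\gtrsim1$ one relies on $W_t^{-\nu}(x,y)\simeq(xy)^{\nu-1/2}t^{-1/2}e^{-(x-y)^2/4t}$, essentially as in the corresponding analysis in \cite{NoSt1}, which produces the near-diagonal logarithm; on the range $xy/t\lesssim1$ one expands via \eqref{bes:ser}, as in the derivation of \eqref{77}--\eqref{78}, obtaining $W_t^{-\nu}(x,y)-W_t^{-\nu}(0,y)=c_{\nu}t^{\nu-1}e^{-y^2/4t}\big(\mathcal{O}(1-e^{-x^2/4t})+\mathcal{O}((xy/t)^2)\big)$, and here it is essential to retain the factor $1-e^{-x^2/4t}\simeq\min(1,x^2/t)$ — which is precisely what the compensation buys — since discarding it makes the $t$-integral diverge, whereas keeping it yields the sharp power $x^2y^{2\nu-3}$ in the region $2x\le y$.

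Finally, I would multiply the displayed bounds for $K_{-\nu}(x,y)$ by the prefactor $2\nu\,x^{-1}(xy)^{-2\nu}$ and use $x\simeq y$ on the strip $x/2<y<2x$: this converts them into $\lesssim x^{-2}y^{-2\nu}$ for $y\le x/2$, $\lesssim y^{-2\nu-2}(1+\log\frac{xy}{(y-x)^2})$ for $x/2<y<2x$, and $\lesssim x^{-2\nu+1}y^{-3}$ for $2x\le y$. Adding these to the bounds for $(xy)^{-2\nu}R_{-\nu}(x,y)$ from the first step, the only piece on the diagonal strip that is not $\mathcal{O}\big(y^{-2\nu-2}(1+\log\frac{xy}{(y-x)^2})\big)$ is the principal term $\frac1\pi\frac{(xy)^{-\nu-1/2}}{y-x}$, while off the diagonal one obtains exactly the two claimed bounds, which finishes the proof. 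The step I expect to be the real obstacle is the second one — specifically producing the compensated potential kernel bound with the sharp exponent $-3$ in $y$ for $2x\le y$, where the cancellation encoded in the compensation must be exploited rather than thrown away.
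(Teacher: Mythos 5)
Your proposal is correct and follows essentially the same route as the paper: the same decomposition $\widetilde{R}_{\nu}=(xy)^{-2\nu}R_{-\nu}-2\nu x^{-1}(xy)^{-2\nu}K_{-\nu}$, the same appeal to \cite[Lemma 4.3]{BHNV} for the classical Riesz piece, and the same crux — the bound $|K_{\mu}(x,y)|\lesssim y^{-2\mu-1}(1+\log^+\frac{xy}{(y-x)^2})$ for $-1<\mu<-1/2$ on the diagonal strip, proved by splitting the $t$-integral at $t\simeq xy$ and retaining the compensation factor $1-e^{-x^2/4t}$ in the large-$t$ regime. The only cosmetic difference is that you derive the off-diagonal bounds from homogeneity of $K_{-\nu}$ together with the limits \eqref{77}--\eqref{78}, whereas the paper refers that case back to the proof of Lemma \ref{lem:Rlowe}(d), which rests on exactly those limits.
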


\begin{proof}
In view of the proof of Lemma \ref{lem:Rlowe}(d) one can assume that $x \simeq y$.
Further, by the decomposition $\widetilde{R}_{\nu}(x,y) = \widetilde{R}_{\nu}^1(x,y) - \widetilde{R}_{\nu}^2(x,y)$ from
the proof of the sufficiency part of Theorem \ref{thm:Rexo} and the bounds for $\widetilde{R}_{\nu}^1$ stated there,
it is enough to bound suitably $\widetilde{R}_{\nu}^2(x,y)$ for comparable $x$ and $y$. This further boils down
to proving the following bound for the classical compensated potential kernel and $-1 < \mu < -1/2$:
for any given $b > 1$,
\begin{equation} \label{Kmu}
|K_{\mu}(x,y)| \lesssim y^{-2\mu-1} \bigg( 1 + \log^+\frac{xy}{(y-x)^2}\bigg), \qquad x/b < y < bx,
\end{equation}
with $\log^+$ standing for the positive part of the logarithm.

Recall that for $-1 < \mu < -1/2$,
$$
K_{\mu}(x,y) = \frac{1}{\sqrt{\pi}} \int_0^{\infty} \Big( W_t^{\mu}(x,y) - W_t^{\mu}(0,y)\Big)\, \frac{dt}{\sqrt{t}}.
$$
We split the integral here and write
$$
K_{\mu}(x,y) = \frac{1}{\sqrt{\pi}} \Bigg\{ \int_0^{xy} + \int_{xy}^{\infty} \Bigg\} \Big(\ldots\Big)\, \frac{dt}{\sqrt{t}}
	\equiv K_{\mu}^1(x,y) + K_{\mu}^2(x,y).
$$
We will estimate separately each of the two component kernels. To this end we always assume that $b>1$ is fixed and
$x/b < y < bx$.

In order to bound $K_{\mu}^2(x,y)$ we first observe that, for $xy < t$,
\begin{align*}
W_t^{\mu}(x,y) - W_t^{\mu}(0,y) & =
	\frac{1}{(2t)^{\mu+1}} e^{-\frac{y^2}{4t}} \bigg[ \Big(\frac{xy}{2t}\Big)^{-\mu} I_{\mu}\Big(\frac{xy}{2t}\Big)
		e^{-\frac{x^2}{4t}} - w^{-\mu}I_{\mu}(w)\Big|_{w=0^+} \bigg] \\
& = \frac{1}{(2t)^{\mu+1}} e^{-\frac{y^2}{4t}}\bigg[ w^{-\mu}I_{\mu}(w)\Big|_{w=0^+} \Big(e^{-\frac{x^2}{4t}}-1\Big)
	+ \mathcal{O}\bigg( \Big(\frac{xy}t\Big)^2\bigg) e^{-\frac{x^2}{4t}} \bigg] \\
& = \frac{1}{t^{\mu+1}} e^{-\frac{y^2}{4t}}\mathcal{O}\bigg( \frac{x^2}{t}\bigg)
		+ \frac{1}{t^{\mu+1}} e^{-\frac{y^2}{4t}} \mathcal{O}\bigg( \Big(\frac{xy}t\Big)^2\bigg),
\end{align*}
where we used \eqref{bes:ser}. Therefore, (recall that we are considering $x \simeq y$)
$$
\big| W_t^{\mu}(x,y) - W_t^{\mu}(0,y)\big| \lesssim \frac{1}{t^{\mu+1}} e^{-\frac{y^2}{4t}}
	\bigg(\frac{y^2}{t}+ \frac{y^4}{t^2}\bigg)
	 \simeq \frac{y^2}{t^{\mu+2}} e^{-\frac{y^2}{4t}}, \qquad xy < t.
$$
Consequently,
$$
\big|K_{\mu}^2(x,y)\big| \lesssim y^2 \int_{xy}^{\infty} e^{-\frac{y^2}{4t}}\, \frac{dt}{t^{\mu+5/2}}
	\simeq y^{-2\mu-1},
$$
where the last relation follows by the change of variable $u=y^2/4t$.

To estimate $K_{\mu}^1(x,y)$ we write
$$
\big| K_{\mu}^1(x,y) \big| \le \int_0^{xy} W_t^{\mu}(x,y)\, \frac{dt}{\sqrt{t}} + \int_0^{xy} W_t^{\mu}(0,y)\, \frac{dt}{\sqrt{t}}
	\equiv K_{\mu}^3(x,y) + K_{\mu}^4(x,y).
$$
Then
$$
K_{\mu}^4(x,y) = \frac{1}{2^{\mu}\Gamma(\mu+1)} \int_0^{xy} \frac{1}{(2t)^{\mu+1}} e^{-\frac{y^2}{4t}}\, \frac{dt}{\sqrt{t}}
	\simeq y^{-2\mu-1},
$$
by changing the variable $u=y^2/4t$ and since $x \simeq y$.

Finally, to deal with $K_{\mu}^3(x,y)$ we use the large argument asymptotic behavior \eqref{bes:est}
and get
$$
K_{\mu}^3(x,y) \lesssim (xy)^{-\mu-1/2} \int_0^{xy} e^{-\frac{(x-y)^2}{4t}}\, \frac{dt}{\sqrt{t}}
	\simeq (xy)^{-\mu-1/2} \int_{\frac{(x-y)^2}{4xy}}^{\infty} e^{-u}\, \frac{du}{u}.
$$
The last integral can easily be estimated, so taking also into account that $x \simeq y$ we conclude
$$
K_{\mu}^3(x,y) \lesssim y^{-2\mu-1}\bigg( 1 + \log^+ \frac{xy}{(x-y)^2}\bigg).
$$
Now \eqref{Kmu} follows and this finishes the proof.
\end{proof}

\begin{proof}[Proof of Theorem \ref{thm:Rexob}]
The proof goes along the lines of the proof of Theorem \ref{thm:Rexo}.
Let us consider first the main case $1/2 < \nu < 1$.

To show the sufficiency part, use Proposition \ref{pro:restl12} to obtain the control
\begin{align*}
\big| \widetilde{R}_{\nu}f(x)\big| &
	\lesssim |\mathcal{H}_{\nu,\loc}f(x)| + N^{\log}|f|(x) + H_0^1 |f|(x) + H_{\infty}^{-2\nu+1}|f|(x), \\
\big| \widetilde{R}^{*}_{\nu}f(x)\big| &
	\lesssim |\mathcal{H}_{\nu,\loc}f(x)| + N^{\log}|f|(x) + H_0^2 |f|(x) + H_{\infty}^{-2\nu}|f|(x).
\end{align*}
Then apply Lemmas \ref{lem:MH}, \ref{lem:NN}, \ref{lem:H0} and \ref{lem:Hinf} to get the conclusion.

The necessity part is proved by means of Proposition \ref{pro:restl12},
Lemma \ref{lem:Rlowe}(d) and necessity results for the Hardy operators
involved; see Lemmas \ref{lem:H0} and \ref{lem:Hinf}.

The case $\nu = 1/2$ is slightly different since more logarithms come into play. By Proposition~\ref{prop:keru12}
we have the control
\begin{align*}
\big| \widetilde{R}_{1/2}f(x)\big| &
	\lesssim |\mathcal{H}_{1/2,\loc}f(x)| + N^{\log}|f|(x) + H_0^{1,\log} |f|(x) + H_{\infty}^{0}|f|(x), \\
\big| \widetilde{R}^{*}_{1/2}f(x)\big| &
	\lesssim |\mathcal{H}_{1/2,\loc}f(x)| + N^{\log}|f|(x) + H_0^2 |f|(x) + H_{\infty}^{-1,\log}|f|(x).
\end{align*}
Then, to show the sufficiency part, one uses Lemma \ref{lem:Hlog} together with the other lemmas mentioned before.
The necessity part is deduced with the aid of Proposition \ref{prop:keru12},
Lemma \ref{lem:Rlowe}(c) and (implicit) necessity parts in Lemma \ref{lem:Hlog}.
\end{proof}

%%%%%%%%%%%%%%%%%%%%%%%%%%%%%%%%%%%%%%%%%%%%%%%%%%%%%%%%%%%%%%
%%%%%%%%%%%%%%%%%%%%%%%%%%%%%%%%%%%%%%%%%%%%%%%%%%%%%%%%%%%%%%

\section{Vertical $g$-function} \label{sec:g}

Recall that the heat semigroup based vertical $g$-function in the one-dimensional classical Bessel setting is defined as
$$
g_{\nu}(f)(x) = \bigg\| \frac{\partial}{\partial t} W_t^{\nu}f(x)\bigg\|_{L^2(\mathbb{R}_+,t dt)}, \qquad x > 0.
$$
Here $\nu > -1$ is in the classical range. We define the exotic counterpart of $g_{\nu}$ in the natural way,
$$
\widetilde{g}_{\nu}(f)(x) =
	\bigg\| \frac{\partial}{\partial t} \widetilde{W}_t^{\nu}f(x)\bigg\|_{L^2(\mathbb{R}_+,t dt)}, \qquad x > 0,
$$
where $0 \neq \nu < 1$ is in the exotic range. Observe that, in view of \eqref{Wexotocls},
\begin{equation} \label{g_exotocls}
\widetilde{g}_{\nu}(f)(x) = x^{-2\nu} g_{-\nu}\big(y^{2\nu}f\big)(x), \qquad x > 0, \quad 0 \neq \nu < 1.
\end{equation}

The following characterization of mapping properties of $g_{\nu}$ was obtained in \cite[Theorem 2.5]{BHNV}.
\begin{thm}[{\cite{BHNV}}] \label{thm:gcls}
Let $\nu > -1$, $1 \le p < \infty$, $\delta \in \mathbb{R}$. Then the $g$-function $g_{\nu}$,
considered on the measure space $(\mathbb{R}_+,x^{\delta}dx)$, has the following mapping properties:
\begin{itemize}
\item[(a)]
$g_{\nu}$ is of strong type $(p,p)$ if and only if $p > 1$ and $-1 < \delta < (2\nu+2)p -1$;
\item[(b)]
$g_{\nu}$ is of weak type $(p,p)$ if and only if $-1 < \delta < (2\nu+2)p-1$, with the second inequality weakened in case $p=1$;
\item[(c)]
$g_{\nu}$ is of restricted weak type $(p,p)$ if and only if $-1 < \delta \le (2\nu+2)p-1$.
\end{itemize}
\end{thm}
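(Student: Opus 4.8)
The plan is to run the same two-step scheme used above for Theorem \ref{thm:maxWexo} and for the Riesz transforms: first control $g_{\nu}$ pointwise by a sum of the special operators of Section \ref{sec:tech} (this yields the ``if'' parts), and then rule out the remaining cases by explicit counterexamples built from $\chi_{(1,2)}$ and $\chi_{(1,1+\epsilon)}$ (this yields the ``only if'' parts). The key quantity is the vertical kernel
$$
G_{\nu}(x,y) := \Big\| \frac{\partial}{\partial t} W_t^{\nu}(x,y) \Big\|_{L^2(\mathbb{R}_+,t\,dt)}, \qquad x,y>0 ,
$$
since by Minkowski's integral inequality $g_{\nu}(f)(x) \le \int_0^{\infty} G_{\nu}(x,y)|f(y)|\, d\eta_{\nu}(y)$ away from the central region, while in the central region the $L^2(t\,dt)$-norm must be kept inside the spatial integral.

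For the sufficiency part I would first record, using the Bessel asymptotics \eqref{bes:est} (and \eqref{bes:el} where convenient), that for a suitable $c>0$ one has $G_{\nu}(x,y) \lesssim x^{-2\nu-2}$ when $y\le x/2$ and $G_{\nu}(x,y)\lesssim y^{-2\nu-2}$ when $y\ge 2x$, with no cancellation in either region; these two parts of $g_{\nu}$ are thus dominated by $H_0^{2\nu+1}|f|(x)$ and $H_{\infty}^{0}|f|(x)$, respectively. In the central region $x/2<y<2x$ I would split $W_t^{\nu}(x,y)$ into $c\,(xy)^{-\nu-1/2}\mathcal{W}_t(x,y)$ plus a remainder coming from the $\mathcal{O}(w^{-3/2})$ tail of the large-argument asymptotic of $I_{\nu}$ and from the range $t\gtrsim xy$ where the small-argument asymptotic is used; after applying $\partial_t$ and the $L^2(t\,dt)$-norm, the main term reproduces $\mathfrak{g}_{\nu,\loc}(f)(x)$ and the remainder is absorbed into $N^{\log}|f|(x)$, exactly as in \cite[Section~5]{BHNV}. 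Altogether $g_{\nu}(f)(x)\lesssim H_0^{2\nu+1}|f|(x)+\mathfrak{g}_{\nu,\loc}(f)(x)+N^{\log}|f|(x)+H_{\infty}^{0}|f|(x)$, and Lemmas \ref{lem:H0}, \ref{lem:Hinf}, \ref{lem:MH} and \ref{lem:NN} give all the positive statements: $H_0^{2\nu+1}$ imposes $\delta<(2\nu+2)p-1$ (with ``$\le$'' for restricted weak type and, at $p=1$, for weak type), $H_{\infty}^{0}$ imposes $\delta>-1$ in every case (here $\xi=0$, so no weakening even at $p=1$), and $\mathfrak{g}_{\nu,\loc}$ only forces $p>1$ for strong type while being harmless at the weak endpoints. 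There is no $L^{\infty}$ claim, consistently with $H_{\infty}^{0}$ and $\mathfrak{g}_{\nu,\loc}$ being unbounded on $L^{\infty}$.

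For the necessity part I would take $f=\chi_{(1,2)}$ and exploit that, by the small-argument asymptotic of $I_{\nu}$, $\partial_t W_t^{\nu}(x,y)$ equals a negative multiple of $t^{-\nu-2}e^{-(x^2+y^2)/4t}$ for $t$ above a large multiple of $\max(1,x^2)$, with all contributions of the same sign. Integrating $|\partial_t W_t^{\nu}f(x)|^2 t$ over such a range gives $g_{\nu}(\chi_{(1,2)})(x)\gtrsim 1$ for $0<x<1$ (forcing $\delta>-1$ already for restricted weak type, since $\chi_{(1,2)}\in L^{p,1}(x^{\delta}dx)$ for all $\delta$) and $g_{\nu}(\chi_{(1,2)})(x)\gtrsim x^{-2\nu-2}$ for large $x$ (forcing $\delta\le(2\nu+2)p-1$). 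Restricting the same computation to $y\le x/2$ gives, for $f\ge0$, $g_{\nu}(f)(x)\gtrsim x^{-2\nu-2}\int_0^{x/2}f\,d\eta_{\nu}\simeq H_{0,b}^{2\nu+1}f(x)$ with $b=2$; since $H_0^{2\nu+1}$ and hence, by Proposition \ref{prop:Hb}, its $b$-shift fail the weak type $(p,p)$ estimate on $(\mathbb{R}_+,x^{(2\nu+2)p-1}dx)$ for $p>1$ (Lemma \ref{lem:H0}(b)), this shows $g_{\nu}$ is only of restricted weak type, not weak type, at the right endpoint when $p>1$. Finally, that $g_{\nu}$ is never bounded on $L^1(x^{\delta}dx)$ is the classical $g$-function obstruction: with $f_{\epsilon}=\chi_{(1,1+\epsilon)}$, $0<\epsilon<1/4$, the central behaviour of $W_t^{\nu}$ (comparable to $\mathcal{W}_t$ for $1<x,y<2$) yields $g_{\nu}(f_{\epsilon})(x)\gtrsim \epsilon/(x-1)$ on $(1+2\epsilon,2)$, so $\|g_{\nu}f_{\epsilon}\|_{L^1(x^{\delta}dx)}\gtrsim\epsilon\log(1/\epsilon)$ while $\|f_{\epsilon}\|_{L^1(x^{\delta}dx)}\simeq\epsilon$. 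Collecting these bounds gives precisely the stated characterization.

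The one genuinely delicate point is the central-region analysis: isolating inside $\partial_t W_t^{\nu}(x,y)$ for $x\simeq y$ the piece that reconstructs $\mathfrak{g}_{\nu,\loc}$, and verifying that the remainder — the interplay of $\partial_t$ with the Gaussian factor, the $\mathcal{O}(w^{-3/2})$ tail of the Bessel asymptotic, and the crossover at $t\simeq xy$ between the two asymptotic regimes of $I_{\nu}$ — produces only operators bounded on every $L^p(x^{\delta}dx)$. Everything else is bookkeeping with Lemmas \ref{lem:H0}, \ref{lem:Hinf}, \ref{lem:MH} and \ref{lem:NN} together with the elementary counterexamples above; in particular the transference Proposition \ref{thm:trans} is of no help here, this being the classical base case, and in any event it respects only strong-type bounds.
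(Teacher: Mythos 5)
First, a point of reference: the paper does not actually prove Theorem \ref{thm:gcls} --- it is imported verbatim from \cite[Theorem 2.5]{BHNV}, and the only pieces of its proof that surface in the paper are the majorization $g_{\nu}(f)\lesssim H_0^{2\nu+1}|f|+H_{\infty}^0|f|+N|f|+\mathfrak{g}_{\nu,\loc}(f)$ quoted before \eqref{gfcontr}, the sign information on $\partial_t W_t^{\nu}(x,y)$ underlying \eqref{ptlow} (from \cite[Lemma 5.2]{BHNV}), and the reference to \cite[p.~134--136]{BHNV} for failure of $L^1$-boundedness. Measured against that, your scheme is essentially the same as the one in \cite{BHNV}: the same four majorizing operators (your $N^{\log}$ versus their $N$ is immaterial by Lemma \ref{lem:NN}), the same bookkeeping with Lemmas \ref{lem:H0}, \ref{lem:Hinf}, \ref{lem:MH}, \ref{lem:NN}, the same sign-definite lower bound on $\partial_t W_t^{\nu}$ for $t$ large relative to $\max(x,y)^2$, and the same $\chi_{(1,1+\epsilon)}$ counterexample for the $L^1$ failure. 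You correctly identify the central-region decomposition as the delicate step and defer it to \cite{BHNV}, which is acceptable for a cited result.

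There is, however, one genuine gap in your necessity argument at the right endpoint. You assert that for \emph{all} $f\ge 0$ one has $g_{\nu}(f)(x)\gtrsim x^{-2\nu-2}\int_0^{x/2}f\,d\eta_{\nu}$ by ``restricting the computation to $y\le x/2$''. This does not follow: $g_{\nu}(f)(x)$ is the $L^2(t\,dt)$-norm of $\int_0^{\infty}\partial_t W_t^{\nu}(x,y)f(y)\,d\eta_{\nu}(y)$, and the sign-definiteness of $\partial_t W_t^{\nu}(x,y)$ for $t\ge x^2/a$ is only guaranteed for $y$ with $y^2\lesssim t$ as well; mass of $f$ at $y\gg\sqrt{t}$ can contribute with the opposite sign and destroy the claimed pointwise domination of $H_{0,b}^{2\nu+1}$. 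The correct version --- and the one the paper itself uses in the exotic analogue \eqref{gb1} --- restricts to $f$ supported in a fixed bounded interval, say $[0,1]$, and $x>1$, where every $y$ in the support satisfies $y<x$ and $x^2/t\le a$ on the integration range; this yields $g_{\nu}(f)(x)\gtrsim x^{-2\nu-2}\int_0^1 f\,d\eta_{\nu}$. That weaker bound still suffices, because failure of weak type for $H_0^{2\nu+1}$ at $\delta=(2\nu+2)p-1$, $p>1$, is witnessed by functions supported in $(0,1)$: since $x^{2\nu+1-\delta}\chi_{(0,1)}\notin L^{p'}(x^{\delta}dx)$ there, one picks $f_n\ge 0$ supported in $(0,1)$ with $\|f_n\|_{L^p(x^{\delta}dx)}\le 1$ and $\int_0^1 f_n\,d\eta_{\nu}\ge n$ and runs the argument of step (B) in the proof of Theorem \ref{thm:maxWexo}, rather than quoting Lemma \ref{lem:H0}(b) as a black box through an unjustified pointwise inequality. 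With that repair the proof is complete and matches the intended one.
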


We will prove a similar result in the exotic case.
\begin{thm} \label{thm:gexo}
Let $0 \neq \nu < 1$, $1 \le p < \infty$, $\delta \in \mathbb{R}$. Then the $g$-function $\widetilde{g}_{\nu}$,
considered on the measure space $(\mathbb{R}_+,x^{\delta}dx)$, has the following mapping properties:
\begin{itemize}
\item[(a)]
$\widetilde{g}_{\nu}$ is of strong type $(p,p)$ if and only if $p > 1$ and $2\nu p - 1 < \delta < 2p -1$;
\item[(b)]
$\widetilde{g}_{\nu}$ is of weak type $(p,p)$ if $2\nu p -1 < \delta < 2p-1$, with both inequalities
	weakened in case $p=1$; otherwise
	$\widetilde{g}_{\nu}$ is not of weak type $(p,p)$, excluding possibly 
	the case when $\delta = 2\nu p -1$ and $p>1$.
\item[(c)]
$\widetilde{g}_{\nu}$ is of restricted weak type $(p,p)$ if and only if $2\nu p-1 \le \delta \le 2p-1$.
\end{itemize}
\end{thm}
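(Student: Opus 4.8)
The plan is to handle the strong-type statement by transference and the weak and restricted weak type statements by the usual combination of a pointwise majorization and explicit counterexamples. For part (a) I would combine the identity \eqref{g_exotocls} with Proposition \ref{thm:trans}: writing the weight $x^{\delta}dx$ as $U^p\,d\eta_{\nu}$ with $U=x^{(\delta-2\nu-1)/p}$ and applying Proposition \ref{thm:trans}(ii) with $q=p$, $V=U$, one sees that $\widetilde{g}_{\nu}$ is of strong type $(p,p)$ on $(\mathbb{R}_+,x^{\delta}dx)$ if and only if $g_{-\nu}$ is of strong type $(p,p)$ on $(\mathbb{R}_+,x^{\delta-2\nu p}dx)$; Theorem \ref{thm:gcls}(a) with $\nu$ replaced by $-\nu$ then gives exactly $p>1$ and $2\nu p-1<\delta<2p-1$. (The same equivalence records that $\widetilde{g}_{\nu}$ is never of strong type $(1,1)$, since $g_{-\nu}$ never is.) Transference does not preserve weak or restricted weak type, so the remaining items need direct work.

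For the sufficiency in (b) and (c) I would establish the pointwise estimate
\[
\widetilde{g}_{\nu}(f)(x)\lesssim \mathfrak{g}_{\nu,\loc}f(x)+H_0^1|f|(x)+H_{\infty}^{-2\nu}|f|(x),\qquad x>0 .
\]
By \eqref{g_exotocls} this reduces to the analogous control of the classical $g_{-\nu}$ established in \cite{BHNV}, together with the elementary identities $x^{-2\nu}\mathfrak{g}_{-\nu,\loc}(y^{2\nu}f)(x)=\mathfrak{g}_{\nu,\loc}f(x)$, $x^{-2\nu}H_0^{-2\nu+1}(y^{2\nu}|f|)(x)=H_0^1|f|(x)$ and $x^{-2\nu}H_{\infty}^{0}(y^{2\nu}|f|)(x)=H_{\infty}^{-2\nu}|f|(x)$; alternatively one estimates $\|\partial_t\widetilde{W}_t^{\nu}(x,y)\|_{L^2(\mathbb{R}_+,t\,dt)}$ directly from \eqref{exoWcls} and the Bessel asymptotics \eqref{bes:est}, moving the $L^2(t\,dt)$-norm inside by Minkowski's integral inequality in the off-diagonal regions $y\le x/2$ and $y\ge 2x$, the diagonal part producing $\mathfrak{g}_{\nu,\loc}$ up to an $N^{\log}$-type error. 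Granting the estimate, Lemmas \ref{lem:MH}, \ref{lem:H0} and \ref{lem:Hinf} yield boundedness on $L^p(x^{\delta}dx)$ for $1<p<\infty$, $2\nu p-1<\delta<2p-1$, weak type $(1,1)$ for $2\nu-1\le\delta\le1$, and restricted weak type $(p,p)$ for $2\nu p-1\le\delta\le2p-1$, $1\le p<\infty$; together with (a) this gives all positive directions in (b) and (c).

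The necessity parts recycle the counterexamples from the proof of Theorem \ref{thm:maxWexo} once suitable lower bounds for $\widetilde{g}_{\nu}$ replace those for $\widetilde{W}_*^{\nu}$. The starting point is the Cauchy--Schwarz bound
\[
\widetilde{g}_{\nu}(f)(x)\ \ge\ \frac{\big|\widetilde{W}_{t_1}^{\nu}f(x)-\widetilde{W}_{t_2}^{\nu}f(x)\big|}{\sqrt{\log(t_2/t_1)}},\qquad 0<t_1<t_2 ,
\]
which comes from $\widetilde{g}_{\nu}(f)(x)=\|\partial_t\widetilde{W}_t^{\nu}f(x)\|_{L^2(\mathbb{R}_+,t\,dt)}$. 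Taking $f=\chi_{(1,2)}$, comparing the scales $t=x^2$ and $t=Tx^2$ with a large fixed $T$, and combining the kernel lower bounds \eqref{3exo}--\eqref{4exo} (established in the proof of Theorem \ref{thm:maxWexo}) with the matching upper bounds (available from the sufficiency part), one gets $\widetilde{g}_{\nu}(\chi_{(1,2)})(x)\gtrsim x^{-2}$ for $x>2$ and $\widetilde{g}_{\nu}(\chi_{(1,2)})(x)\gtrsim x^{-2\nu}$ for $0<x<1$ — the exact analogues of \eqref{55exo} and \eqref{6exo} — with $T$ chosen uniformly in $x$, which is possible because $\nu<1$. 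Feeding these into the argument for (A) in that proof gives the necessity of $2\nu p-1\le\delta\le2p-1$ in (c), hence the failure of weak type for $\delta<2\nu p-1$ and $\delta>2p-1$ in (b). For $f\ge0$ supported in $(0,1)$ the same device yields $\widetilde{g}_{\nu}(f)(x)\gtrsim x^{-2}\int_0^1 yf(y)\,dy$ for $x>1$, the analogue of \eqref{5exo}, and the sequence/functional argument from (B) then shows $\widetilde{g}_{\nu}$ is not of weak type $(p,p)$ at $\delta=2p-1$, $p>1$. Finally, for $p=1$ weak type fails off $[2\nu-1,1]$ because restricted weak type does (and weak type implies restricted weak type), and holds on $[2\nu-1,1]$ by the pointwise estimate; for $p>1$ the endpoint $\delta=2\nu p-1$ stays open, as stated.

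I expect the genuine difficulty to lie in the necessity part: converting pointwise-in-$t$ kernel lower bounds into honest lower bounds for the $L^2(t\,dt)$-norm. For a maximal operator a single time slice $t=t_0$ suffices, whereas for $\widetilde{g}_{\nu}$ one must exhibit two comparable times at which $\widetilde{W}_t^{\nu}f$ differs by a definite amount, which forces one to pair each available lower bound with an upper bound at a nearby scale and to verify that the comparison ratio is uniform in $x$. That, together with the irreducible $\delta=2\nu p-1$ gap for $p>1$ (the same phenomenon already present in the classical case), is the only delicate point; everything else is either transference or a routine application of the lemmas from Section \ref{sec:tech}.
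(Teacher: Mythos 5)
Your proposal is correct and reaches all parts of the theorem, but the route differs from the paper's in two places, both worth noting. For the lower bounds driving the necessity arguments, the paper does not use your Cauchy--Schwarz/two-time-scale device: it invokes the sign-definite derivative estimate \eqref{ptlow} (transferred from \cite[Lemma~5.2]{BHNV} via \eqref{exoWcls}), which makes $\partial_t\widetilde{W}_t^{\nu}(x,y)$ negative and of size $\gtrsim t^{\nu-2}(xy)^{-2\nu}$ on a whole half-line of times ($t\ge x^2/a$, resp.\ $t\ge 1$), so that the $L^2(t\,dt)$-norm of the inner integral is bounded below directly by $\|t^{\nu-2}\|_{L^2((x^2/a,\infty),t\,dt)}$ with no cancellation to worry about. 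Your comparison of the scales $t=x^2$ and $t=Tx^2$ is also sound --- the kernel at the larger scale carries a factor $T^{\nu-1}\to0$ uniformly in $x$ because $xy/2t$ stays small there, which is exactly the uniformity you flag --- but it costs you the extra verification of a matching upper bound at the second scale, which the paper's one-sided derivative bound renders unnecessary. For the strong type, you transfer via Proposition~\ref{thm:trans} and Theorem~\ref{thm:gcls}(a), whereas the paper reads it off the same pointwise majorization \eqref{gfcontr} used for the weak and restricted weak types (your transference also disposes of the failure of strong type $(1,1)$, which is precisely how the paper proves its item~(C), citing the $L^1$-unboundedness of $g_{\nu}$ from \cite[p.\,134--136]{BHNV}). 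Two small imprecisions that do not affect the argument: your displayed majorization omits the harmless term $N|f|(x)$ present in \eqref{gfcontr} (you acknowledge the $N^{\log}$-type error in the alternative derivation), and the closing remark that the open endpoint $\delta=2\nu p-1$, $p>1$, is ``the same phenomenon already present in the classical case'' is not accurate --- Theorem~\ref{thm:gcls}(b) is a complete characterization with the endpoint $\delta=-1$ resolved; the gap here is genuinely new and stems from the failure of weak-type transference.
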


Unfortunately, we were not able to obtain a full characterization for the weak type $(p,p)$.
More precisely, the question whether $\widetilde{g}_{\nu}$ is of weak type $(p,p)$ for $\delta=2\nu p-1$, $1<p<\infty$,
$\nu \neq 0$, remains open. Roughly speaking, the answer seems to require dealing with some oscillations which are
hard to grasp.

Assuming $p<\infty$, $g_{\nu}$ has the same mapping properties as $W_{*}^{\nu}$, see Theorems \ref{thm:maxWcls} and
\ref{thm:gcls}. This is also true about $\widetilde{g}_{\nu}$ and $\widetilde{W}_{*}^{\nu}$, see Theorems
\ref{thm:maxWexo} and \ref{thm:gexo}, up to the endpoint issue that remains to be sorted out.

To prove Theorem \ref{thm:gexo} we adopt the same strategy as in case of the heat maximal operator $\widetilde{W}_{*}^{\nu}$.

\begin{proof}[{Proof of Theorem \ref{thm:gexo}}]
From the proof of \cite[Theorem 2.5]{BHNV} we have the control
$$
g_{\nu}(f)(x) \lesssim H_0^{2\nu +1}|f|(x) + H_{\infty}^0|f|(x) + N|f|(x) + \mathfrak{g}_{\nu,\loc}(f)(x),
	\qquad x > 0, \quad \nu > -1.
$$
Combining this with \eqref{g_exotocls}, we get
\begin{equation} \label{gfcontr}
\widetilde{g}_{\nu}(f)(x) \lesssim H_0^1|f|(x) + H_{\infty}^{-2\nu}|f|(x) + N|f|(x)
		+ \mathfrak{g}_{\nu,\loc}(f)(x), \qquad x > 0, \quad 0 \neq \nu < 1.
\end{equation}
Appealing now to Lemmas \ref{lem:H0}, \ref{lem:Hinf}, \ref{lem:NN} and \ref{lem:MH} we get the sufficiency part in
Theorem \ref{thm:gexo}.

It remains to prove the necessity part. To this end we always assume $0 \neq \nu < 1$ and $1 \le p < \infty$,
and the underlying space is always $(\mathbb{R}_+,x^{\delta}dx)$. It is enough we prove the following statements.
\begin{itemize}
\item[(A)] If $\widetilde{g}_{\nu}$ is of restricted weak type $(p,p)$, then $2\nu p -1 \le \delta \le 2p -1$.
\item[(B)] $\widetilde{g}_{\nu}$ is not of weak type $(p,p)$ when $p>1$ and $\delta = 2p -1$.
\item[(C)] $\widetilde{g}_{\nu}$ is not of strong type $(1,1)$ if $2\nu-1 \le \delta \le 1$.
\end{itemize}
Below we restrict to $f \ge 0$.

From \eqref{exoWcls} and \cite[Lemma 5.2]{BHNV} it follows that there exists a constant $a>0$ (depending only on $\nu$)
such that
\begin{equation} \label{ptlow}
\frac{\partial}{\partial t} \widetilde{W}_t^{\nu}(x,y) \lesssim \frac{- t^{\nu-2}}{(xy)^{2\nu}} \qquad \textrm{if}\quad
0 < x,y < a \;\; \textrm{and} \;\; t \ge 1 \quad \textrm{or} \quad 0 < y < x \;\; \textrm{and}\;\; \frac{x^2}t \le a.
\end{equation}
Further, for sufficiently nice $f$
$$
\widetilde{g}_{\nu}(f)(x) = \bigg\| \int_0^{\infty} \frac{\partial}{\partial t} \widetilde{W}_t^{\nu}(x,y) f(y)\,
		d\eta_{\nu}(y) \bigg\|_{L^2(\mathbb{R}_+, t dt)}.
$$

Assume that $\support f \subset [0,1]$ and consider $x > 1$. Then, with the aid of \eqref{ptlow}, we can estimate
\begin{align} \nonumber
\widetilde{g}_{\nu}(f)(x) & \ge \bigg\| \int_0^1 \frac{\partial}{\partial t} \widetilde{W}_t^{\nu}(x,y) f(y)\, d\eta_{\nu}(y)
		\bigg\|_{L^2((x^2/a,\infty),t dt)} \\ \label{gb1}
& \gtrsim x^{-2\nu} \big\| t^{\nu-2}\big\|_{L^2((x^2/a,\infty),t dt)} \int_0^1 f(y) y \, dy \\
& \simeq x^{-2} \int_0^1 f(y) y \, dy. \nonumber
\end{align}

Next, assume that $\support f \subset [0,a]$ and consider $x < a$. Then, in view of \eqref{ptlow}, we get
\begin{align} \nonumber
\widetilde{g}_{\nu}(f)(x) & \ge \bigg\| \int_0^a \frac{\partial}{\partial t} \widetilde{W}_t^{\nu}(x,y) f(y)\, d\eta_{\nu}(y)
		\bigg\|_{L^2((1,\infty),t dt)} \\ \label{gb2}
& \gtrsim x^{-2\nu} \big\| t^{\nu-2}\big\|_{L^2((1,\infty),t dt)} \int_0^a f(y) y \, dy \\
& \simeq x^{-2\nu} \int_0^a f(y) y \, dy. \nonumber
\end{align}

First, we prove (A). Choosing $f = \chi_{(1/2,1)}$ and using \eqref{gb1} we get the bound
$$
\widetilde{g}_{\nu}(f)(x) \gtrsim x^{-2}, \qquad x > 1.
$$
Proceeding now as in the proof of Theorem \ref{thm:maxWexo}, see \eqref{55exo}, we get the upper bound for $\delta$ in (A).
On the other hand, choosing $f = \chi_{(a/2,a)}$ and using \eqref{gb2} we obtain
$$
\widetilde{g}_{\nu}(f)(x) \gtrsim x^{-2\nu}, \qquad 0 < x < a.
$$
From here, by the arguments from the above mentioned proof, see \eqref{6exo}, we arrive at the lower bound for $\delta$ in (A).

To verify (B), observe that it follows by \eqref{gb1} and the corresponding argument from the proof
of Theorem \ref{thm:maxWexo}.

Finally, we deal with (C). Here we could also provide a counterexample, but we prefer to argue in a shorter way, though less directly.
In \cite[p.\,134--136]{BHNV} it was proved that $g_{\nu}$, $\nu > -1$, is not bounded on $L^1(x^{\delta}dx)$ for any
$\delta \in \mathbb{R}$.
Taking into account \eqref{g_exotocls} and Proposition \ref{thm:trans}, we conclude that $\widetilde{g}_{\nu}$,
$0 \neq \nu < 1$, is not bounded on $L^1(x^{\delta}dx)$ for any $\delta \in \mathbb{R}$.

\end{proof}

%%%%%%%%%%%%%%%%%%%%%%%%%%%%%%%%%%%%%%%%%%%%%%%%%%%%%%%%%%%%%%
%%%%%%%%%%%%%%%%%%%%%%%%%%%%%%%%%%%%%%%%%%%%%%%%%%%%%%%%%%%%%%

\section{Fractional integrals} \label{sec:frac}

Let $I^{\nu,\sigma}$ be the fractional integral (Riesz potential) of order
$\sigma > 0$ in the classical one-dimensional Bessel setting of type $\nu > -1$.
We have the integral representation (see e.g.\ \cite[Section 2.1]{NoSt1})
$$
I^{\nu,\sigma}f(x) = \int_0^{\infty} K^{\nu,\sigma}(x,y) f(y) \, d\eta_{\nu}(y), \qquad x > 0,
$$
where the integral kernel (potential kernel) expresses via the Bessel heat kernel,
$$
K^{\nu,\sigma}(x,y) = \frac{1}{\Gamma(\sigma)} \int_0^{\infty} W_t^{\nu}(x,y) t^{\sigma-1}\, dt, \qquad x,y > 0.
$$
We consider $I^{\nu,\sigma}$ on its natural domain consisting of all $f$ for which the integral converges $x$-a.e.
Note that $I^{\nu,\sigma}$ coincides with $(B_{\nu}^{\textrm{cls}})^{-\sigma}$ defined spectrally in $L^2(d\eta_{\nu})$,
see \cite[Proposition 2.3]{NoSt1}.

In the exotic Bessel context of type $\nu < 1$ we consider the corresponding fractional integral
$\widetilde{I}^{\nu,\sigma}$ with the integral representation
$$
\widetilde{I}^{\nu,\sigma}f(x) = \int_0^{\infty} \widetilde{K}^{\nu,\sigma}(x,y) f(y) \, d\eta_{\nu}(y), \qquad x > 0,
$$
being
$$
\widetilde{K}^{\nu,\sigma}(x,y) = \frac{1}{\Gamma(\sigma)} \int_0^{\infty} \widetilde{W}_t^{\nu}(x,y) t^{\sigma-1}\, dt,
	\qquad x,y > 0.
$$
The natural domain of $\widetilde{I}^{\nu,\sigma}$ consists of all $f$ for which the defining integral converges $x$-a.e.
Formally, $\widetilde{I}^{\nu,\sigma}$ coincides in $L^2(d\eta_{\nu})$ with $(B_{\nu}^{\textrm{exo}})^{-\sigma}$ defined spectrally.
This relation can be given a strict meaning in the spirit of \cite[Proposition 2.3]{NoSt1}, but we shall not pursue this matter.

By means of \eqref{exoWcls} it is straightforward to see that
$\widetilde{K}^{\nu,\sigma}(x,y)= (xy)^{-2\nu}K^{-\nu,\sigma}(x,y)$ and, consequently,
\begin{equation} \label{Irel}
\widetilde{I}^{\nu,\sigma}f(x) = x^{-2\nu} I^{-\nu,\sigma}\big(y^{2\nu}f\big)(x), \qquad x > 0.
\end{equation}
Note that the potential kernels $K^{\nu,\sigma}(x,y)$ and $\widetilde{K}^{\nu,\sigma}(x,y)$ are infinite if
$\sigma \ge \nu +1$ and $\sigma \ge -\nu+1$, respectively, cf.\ \cite[Theorem 2.1]{NoSt1}.
 
In \cite[Theorem 2.5]{NoSt1} the following theorem was proved.
\begin{thm}[{\cite{NoSt1}}] \label{thm:potcls}
Let $\nu > -1$ and $0 < \sigma < \nu +1$. Further, let $A,B \in \mathbb{R}$ and assume that $1\le p,q \le \infty$.
\begin{itemize}
\item[(i)] $L^p(x^{Ap}d\eta_{\nu})$ is in the domain of $I^{\nu,\sigma}$ 
if and only if
\begin{equation*} %\label{cnd17}
2\sigma - \frac{2\nu+2}{p} < A < \frac{2\nu+2}{p'} \qquad \textrm{(both $\le$ when $p=1$)}.
\end{equation*}
\item[(ii)]
The estimate
\begin{equation*}
\big\|x^{-B}I^{\nu,\sigma} f\big\|_{L^q(d\eta_\nu)}\lesssim \big\|x^A f\big\|_{L^p(d\eta_\nu)}
\end{equation*} 
holds uniformly in $f \in L^p(x^{Ap}d\eta_{\nu})$ 
if and only if 
the following conditions are satisfied: 
\begin{itemize}
\item[(a)] $p \le q$,
\item[(b)] $\frac{1}{q} = \frac{1}p + \frac{A+B-2\sigma}{2\nu+2}$,
\item[(c)] $A < \frac{2\nu+2}{p'}$ \quad ($\le$ when $p = q'= 1$),
\item[(d)] $B < \frac{2\nu+2}q$ \quad ($\le$ when $p = q'=1$),
\item[(e)] $\frac{1}q \ge \frac{1}p - 2\sigma$ \quad ($>$ when $p=1$ or $q=\infty$).
\end{itemize}
\end{itemize}
\end{thm}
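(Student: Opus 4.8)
The plan is to reduce Theorem~\ref{thm:potcls} to sharp pointwise bounds for the potential kernel $K^{\nu,\sigma}(x,y)$ and then to standard one- and two-weight inequalities for Hardy operators and for the Euclidean Riesz potential. First I would establish, from the explicit formula for $W_t^{\nu}(x,y)$ together with the Bessel asymptotics \eqref{bes:est}, the two-sided estimate (valid for $0<\sigma<\nu+1$, uniformly in $x,y>0$; this is \cite[Theorem 2.1]{NoSt1})
\begin{equation*}
K^{\nu,\sigma}(x,y)\simeq
\begin{cases}
x^{2\sigma-2\nu-2}, & y\le x/2,\\
x^{-2\nu-1}\,\kappa_{\sigma}(x,y), & x/2<y<2x,\\
y^{2\sigma-2\nu-2}, & 2x\le y,
\end{cases}
\end{equation*}
where $\kappa_{\sigma}(x,y)\simeq|x-y|^{2\sigma-1}$ for $\sigma<1/2$, $\kappa_{1/2}(x,y)\simeq 1+\log\frac{xy}{(x-y)^{2}}$, and $\kappa_{\sigma}(x,y)\simeq1$ for $\sigma>1/2$. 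The two off-diagonal regimes follow by splitting $\int_{0}^{\infty}W_t^{\nu}(x,y)t^{\sigma-1}\,dt$ at $t\simeq xy$ and inserting $I_{\nu}(w)\simeq w^{\nu}$ for $w\lesssim1$ and $I_{\nu}(w)\simeq w^{-1/2}e^{w}$ for $w\gtrsim1$, then changing variables; the diagonal one uses $W_t^{\nu}(x,y)\simeq t^{-1/2}(xy)^{-\nu-1/2}e^{-(x-y)^{2}/4t}$ for $t\lesssim xy$ and the substitution $u=(x-y)^{2}/4t$. Symmetry $W_t^{\nu}(x,y)=W_t^{\nu}(y,x)$ makes $K^{\nu,\sigma}$ symmetric, which is why the two global regimes mirror each other, and one checks directly that $K^{\nu,\sigma}$ is homogeneous of degree $2\sigma-2\nu-2$.

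For part (i) I would fix $x>0$, write $g=x^{A}f$, and note that $L^{p}(x^{Ap}d\eta_{\nu})\subset\dom(I^{\nu,\sigma})$ amounts to $\int_{0}^{\infty}K^{\nu,\sigma}(x,y)\,y^{-A}|g(y)|\,d\eta_{\nu}(y)<\infty$ for a.e.\ $x$ and every $g\in L^{p}(d\eta_{\nu})$. The diagonal part is a local truncation of the one-dimensional Riesz potential of order $2\sigma$ acting on a locally $L^{p}$ function, hence finite a.e.\ with no constraint on $A$. Near $0$ the kernel is essentially constant in $y$, so finiteness forces $y^{-A}\in L^{p'}(y^{2\nu+1}dy)$ at the origin, i.e.\ $A<(2\nu+2)/p'$; near $\infty$ the kernel is $\simeq y^{2\sigma-2\nu-2}$, so finiteness forces $y^{2\sigma-2\nu-2-A}\in L^{p'}(y^{2\nu+1}dy)$ at infinity, i.e.\ $A>2\sigma-(2\nu+2)/p$. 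For $p=1$ the same two computations with $L^{p'}$ replaced by $L^{\infty}$ relax both inequalities to non-strict ones.

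For part (ii) I would split $I^{\nu,\sigma}=I_{0}+I_{\mathrm{loc}}+I_{\infty}$ according to the three kernel regions. By the kernel estimates, $I_{0}f(x)\simeq x^{2\sigma}\,H_{0}^{2\nu+1}(|f|)(x)$ and $I_{\infty}f(x)\simeq\int_{2x}^{\infty}y^{2\sigma-1}|f(y)|\,dy$ are a weighted Hardy operator and a dual Hardy operator; rewriting $d\eta_{\nu}=x^{2\nu+1}dx$ and substituting $g=x^{A}f$ turns the required bounds into two-weight power-weighted Hardy inequalities, whose known characterization (cf.\ \cite{AM}) gives precisely $p\le q$, the homogeneity identity (b) (forced by the degree of the kernel), and conditions (c) and (d) respectively. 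The local part $I_{\mathrm{loc}}$ is, up to weight factors bounded above and below on $\{x\simeq y\}$, the one-dimensional Riesz potential of order $2\sigma$; a dyadic decomposition into the annuli $x\simeq y\simeq 2^{k}$ together with the power-weighted Hardy--Littlewood--Sobolev (Stein--Weiss) inequality on each annulus, and summation, yields (a), (b) and (e), the strictness at $p=1$ and $q=\infty$ being the classical failure of HLS at those endpoints. Adding the three pieces gives sufficiency of (a)--(e). Necessity I would obtain by testing $I^{\nu,\sigma}$ on three families of inputs --- a function supported near $0$, one supported near $\infty$, and a bump collapsing onto the diagonal --- which return (c), (d), (e); condition (b) comes from testing on dilates $f_{R}(\cdot)=f(\cdot/R)$ and letting $R\to0,\infty$, and (a) from a standard translation-and-normalization argument that uses $p\le q$.

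The part I expect to be the real work is sharpness at the borderline exponents: the kernel bounds are clean and the positive direction is an assembly of known Hardy and Stein--Weiss estimates, but deciding whether each endpoint inequality is strict or not --- especially the $p=q'=1$ cases in (c) and (d) and the HLS endpoint governing the strict inequality in (e) when $p=1$ or $q=\infty$ --- requires carefully engineered near-extremizing sequences and a careful bookkeeping of the logarithmic (resp.\ constant) behaviour of $\kappa_{\sigma}$ on the diagonal for $\sigma\ge1/2$.
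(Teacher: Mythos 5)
This statement is not proved in the paper at all: it is imported verbatim as \cite[Theorem 2.5]{NoSt1} (the paper only uses it, together with the transference Proposition \ref{thm:trans} and the relation \eqref{Irel}, to deduce the exotic analogue, Theorem \ref{thm:potstr}). So there is no in-paper argument to compare against; what you have written is essentially the strategy of the cited reference --- sharp two-sided bounds for $K^{\nu,\sigma}$ (that is \cite[Theorem 2.1]{NoSt1}) followed by a reduction to power-weighted Hardy inequalities off the diagonal and a Stein--Weiss/HLS-type estimate on the diagonal, with dilations forcing the homogeneity relation (b). Your treatment of part (i) and of the two off-diagonal pieces is sound.

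There is, however, a concrete error in your kernel estimate that would propagate into the local analysis. In the diagonal regime $x/2<y<2x$ you claim $K^{\nu,\sigma}(x,y)\simeq x^{-2\nu-1}\kappa_\sigma(x,y)$ with $\kappa_\sigma\simeq 1$ for $\sigma>1/2$. This is inconsistent with the homogeneity of degree $2\sigma-2\nu-2$ that you yourself record: carrying out your own substitution $u=(x-y)^2/4t$ in $\int_0^{xy}t^{\sigma-3/2}(xy)^{-\nu-1/2}e^{-(x-y)^2/4t}\,dt$ for $\sigma>1/2$ produces $(xy)^{\sigma-\nu-1}\simeq x^{2\sigma-2\nu-2}$, not $x^{-2\nu-1}$; the extra factor $x^{2\sigma-1}$ is unbounded. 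On a single annulus $x\simeq y\simeq 2^k$ the discrepancy is a factor $2^{k(2\sigma-1)}$, so the dyadic summation in your local part would yield the wrong conditions if the stated bound were used literally. The fix is simply to replace the diagonal bound for $\sigma>1/2$ by $(x+y)^{2\sigma-2\nu-2}$ (the kernel is then globally comparable to $(x+y)^{2\sigma-2\nu-2}$). Two smaller caveats: the necessity of $p\le q$ cannot come from ``translation,'' since $(\mathbb{R}_+,d\eta_\nu)$ has no translation structure --- one should use a sum of widely separated dyadic bumps (or invoke the necessity part of the two-weight Hardy characterization in \cite{AM}); and the endpoint cases $p=q'=1$ in (c), (d) and the strictness in (e) for $p=1$ or $q=\infty$ do, as you anticipate, require separate extremizing sequences rather than following from the generic Hardy/HLS machinery.
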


Combining the above result with \eqref{Irel} and Proposition \ref{thm:trans} we get the following counterpart in the exotic case.
\begin{thm} \label{thm:potstr}
Let $\nu < 1$ and $0 < \sigma < -\nu +1$. Let $A,B \in \mathbb{R}$ and assume that $1\le p,q \le \infty$.
\begin{itemize}
\item[(i)]
$L^p(x^{Ap}d\eta_{\nu})$ is in the domain of $\widetilde{I}^{\nu,\sigma}$ if and only if
$$
2\sigma - \frac{2\nu+2}{p}+2\nu < A < \frac{2\nu+2}{p'} - 2\nu \qquad (\textrm{both}  \le  \textrm{when}\; p=1).
$$
\item[(ii)]
The operator $\widetilde{I}^{\nu,\sigma}$ is bounded from $L^p(x^{Ap}d\eta_{\nu})$ to $L^q(x^{-Bq}d\eta_{\nu})$
if and only if the following conditions are satisfied:
\begin{itemize}
\item[(a)] $p \le q$,
\item[(b)] $A+B-2\sigma = (2\nu+2)\Big(\frac{1}{q}-\frac{1}{p}\Big)$,
\item[(c)] $A < \frac{2\nu+2}{p'}-2\nu \qquad (\le \textrm{when}\; p=q'=1)$,
\item[(d)] $B < \frac{2\nu+2}q-2\nu \qquad (\le \textrm{when}\; p=q'=1)$,
\item[(e)] $\frac{1}q-\frac{1}p \ge -2\sigma \qquad (> \textrm{when}\; p=1 \; \textrm{or}\;\, q=\infty)$.
\end{itemize}
\end{itemize}
\end{thm}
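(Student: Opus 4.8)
The plan is to deduce Theorem \ref{thm:potstr} directly from its classical counterpart, Theorem \ref{thm:potcls}, via the transference principle of Proposition \ref{thm:trans} applied to the family $K^{\mu} = I^{\mu,\sigma}$, $\mu \in E = (-1,\infty)$. The starting point is \eqref{Irel}, namely $\widetilde{I}^{\nu,\sigma}f(x) = x^{-2\nu} I^{-\nu,\sigma}\big(y^{2\nu}f\big)(x)$, which is exactly the relation $\widetilde{K}^{\nu}f(x) = x^{-2\nu} K^{-\nu}\big(y^{2\nu}f\big)(x)$ from Section \ref{ssec:trans}, valid with $\nu \in -E$ since $\nu < 1$. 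The hypothesis $0 < \sigma < -\nu+1$ is precisely the admissibility condition $0 < \sigma < \mu + 1$ for $I^{\mu,\sigma}$ with $\mu = -\nu$, so Theorem \ref{thm:potcls} applies with $-\nu$ in the role of $\nu$. Since all kernels involved are nonnegative, ``well defined on $L^p$'' in Proposition \ref{thm:trans}(i) means the same as ``$L^p$ lies in the natural domain'' of Theorems \ref{thm:potcls}(i) and \ref{thm:potstr}(i), so the two parts of the statement are exactly what the transference principle yields once the weight exponents are rewritten.

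For part (i): apply Proposition \ref{thm:trans}(i) with $U = x^{A}$. It says that $L^p(x^{Ap}d\eta_{\nu})$ is in the domain of $\widetilde{I}^{\nu,\sigma}$ if and only if $L^p(x^{A'p}d\eta_{-\nu})$ is in the domain of $I^{-\nu,\sigma}$, where $A' = A + 2\nu(2/p - 1)$. Plugging $A'$ and the parameter $-\nu$ into Theorem \ref{thm:potcls}(i) and simplifying both inequalities (using $1/p + 1/p' = 1$) produces exactly $2\sigma - \frac{2\nu+2}{p} + 2\nu < A < \frac{2\nu+2}{p'} - 2\nu$, with both inequalities weakened when $p = 1$; the endpoint behaviour at $p=1$ is untouched by the substitution because $p$ itself is not changed.

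For part (ii): apply Proposition \ref{thm:trans}(ii) with $U = x^{A}$ and $V = x^{-B}$, so that $\widetilde{I}^{\nu,\sigma}$ is bounded from $L^p(x^{Ap}d\eta_{\nu})$ to $L^q(x^{-Bq}d\eta_{\nu})$ if and only if $I^{-\nu,\sigma}$ is bounded from $L^p(x^{A'p}d\eta_{-\nu})$ to $L^q(x^{-B'q}d\eta_{-\nu})$, where $A' = A + 2\nu(2/p - 1)$ and $B' = B + 2\nu(1 - 2/q)$. Now invoke Theorem \ref{thm:potcls}(ii) with $\nu$ replaced by $-\nu$, $A$ by $A'$, and $B$ by $B'$. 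Conditions (a) ($p \le q$) and (e) ($1/q - 1/p \ge -2\sigma$, strict when $p=1$ or $q=\infty$) are invariant under the substitution. For (b), one has $A' + B' - 2\sigma = A + B - 2\sigma + 4\nu(1/p - 1/q)$, and multiplying out $1/q = 1/p + (A'+B'-2\sigma)/(2-2\nu)$ collapses to the identity $A + B - 2\sigma = (2\nu+2)(1/q - 1/p)$. Conditions (c) and (d) become $A < \frac{2\nu+2}{p'} - 2\nu$ and $B < \frac{2\nu+2}{q} - 2\nu$, respectively, with the endpoint case $p = q' = 1$ carried over verbatim. This is precisely the list in Theorem \ref{thm:potstr}(ii).

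The argument is essentially purely formal; the only substance is the elementary bookkeeping that turns each classical condition into its exotic analogue, and the main point to be careful about is that the boundary strict/weak distinctions (the ``$\le$ when $p=1$'' and ``$\le$ when $p=q'=1$'' clauses, and the ``$>$ when $p=1$ or $q=\infty$'' clause in (e)) survive the transference unchanged, which they do because $p$ and $q$ are not altered by the substitution in Proposition \ref{thm:trans}. I do not expect any genuine obstacle here beyond keeping the $2\nu$-shifts of the exponents straight.
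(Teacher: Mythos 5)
Your proposal is correct and is exactly the paper's argument: the paper derives Theorem \ref{thm:potstr} by combining Theorem \ref{thm:potcls} with the relation \eqref{Irel} and the transference principle of Proposition \ref{thm:trans}, which is precisely what you do, and your bookkeeping of the exponent shifts ($A' = A + 2\nu(2/p-1)$, $B' = B + 2\nu(1-2/q)$) checks out. The only difference is that you spell out the elementary computations that the paper leaves implicit.
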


Note that, in view of Theorem \ref{thm:potstr}, for each $0 < \sigma < -\nu +1$ there are always non-trivial
two power weight $L^p-L^q$ inequalities for $\widetilde{I}^{\nu,\sigma}$. 
On the other hand, in the unweighted case $A=B=0$, a necessary condition for (a)--(e) in Theorem \ref{thm:potstr} to hold, with some
$0 < \sigma < -\nu + 1$, is $\nu \ge -1/2$.
Therefore, still in the unweighted case, the question about weak type endpoint estimate,
i.e.\ $L^1(d\eta_{\nu})-L^{q,\infty}(d\eta_{\nu})$ boundedness
when $1\slash q= 1-\sigma\slash (\nu+1)$ (cf.\ \cite[Theorem 2.2(iii)]{NoSt1}), makes sense only for $-1/2 \le \nu < 0$
(notice that $L^1(d\eta_{\nu})$ is not contained in the domain of $\widetilde{I}^{\nu,\sigma}$ when $\nu > 0$, see Theorem
\ref{thm:potstr}(i)).
Then the $L^1-L^{q,\infty}$ boundedness follows from the corresponding result in the classical Bessel setting and the control
$0 < \widetilde{K}^{\nu,\sigma}(x,y) < K^{\nu,\sigma}(x,y)$ (this control actually holds for all $-1< \nu < 0$,
in view of an analogous relation for the heat kernels, see \cite[Inequality (23)]{NSS}).

%%%%%%%%%%%%%%%%%%%%%%%%%%%%%%%%%%%%%%%%%%%%%%%%%%%%%%%%%%%%%%

\end{document}